\documentclass{article}

\usepackage{amsmath,amssymb,mathrsfs,amsthm,ascmac,mathtools}
\usepackage{color} 
\usepackage{comment} 
\usepackage{multirow} 
\usepackage{multicol,vwcol} 

\newcommand{\veps}{\varepsilon} 
\newcommand{\eps}{\epsilon} 

\newcommand{\del}{\partial} 
\newcommand{\divv}[1][]{\text{div}_{#1}\,} 
\newcommand{\Divv}[1][]{\text{Div}_{#1}\,}
 
\newcommand{\loc}{\text{loc}} 

\newcommand{\tr}[1]{{#1^\mathsf{T}}}


\newcommand{\bb}[1]{\mathbb{#1}}
\newcommand{\cc}[1]{\mathcal{#1}}
\newcommand{\s}[1]{\mathscr{#1}}

\newcommand{\R}{\mathbb{R}}


\newcommand{\fb}[1]{{\bf #1}}

\newcommand{\bD}{{\bf D}}
\newcommand{\bI}{{\bf I}}
\newcommand{\bF}{{\bf F}}
\newcommand{\bG}{{\bf G}}
\newcommand{\bH}{{\bf H}}

\newcommand{\bM}{{\bf M}}
\newcommand{\bS}{{\bf S}}

\newcommand{\bU}{{\bf U}}
\newcommand{\bV}{{\bf V}}
\newcommand{\bW}{{\bf W}}
\newcommand{\bX}{{\bf X}}

\newcommand{\bff}{{\bf f}}
\newcommand{\bg}{{\bf g}}
\newcommand{\bh}{{\bf h}}
\newcommand{\bu}{{\bf u}}
\newcommand{\bv}{{\bf v}}

\newcommand{\bn}{{\bf n}}

\newcommand{\tp}{{\tilde{p}}}
\newcommand{\tq}{{\tilde{q}}}
\newcommand{\BR}{\bb{R}}

\newcommand{\wsp}[1][n]{{\bb{R}^{#1}}}
\newcommand{\hsp}[1][n]{{\bb{R}^{#1}_+}}




\newcommand{\FT}[1][]{\s{F}_{#1}} 
\newcommand{\IFT}[1][]{\s{F}^{-1}_{#1}}



\newcommand{\pres}{\mathfrak{p}} 
\newcommand{\press}{\mathfrak{q}} 
\newcommand{\Pres}{\mathfrak{P}} 
\newcommand{\nml}{{\bn}} 



\usepackage[dvipdfmx]{graphicx}
\usepackage{bmpsize}
\allowdisplaybreaks[4] 
\numberwithin{equation}{section}  
\mathtoolsset{showonlyrefs=true}

\theoremstyle{break}
\newtheorem{Def}{Definition}[section]
\newtheorem{Thm}{Theorem}[section]
\newtheorem{Prop}{Proposition}[section]
\newtheorem{Lem}{Lemma}[section]

\newtheorem{Rem}{Remark}[section]
\newtheorem{Ass}{Assumption}[section]

\newcommand{\lat}[1]{#1}
\newcommand{\DBC}[2]{#1}
\newcommand{\commm}[1]{}
\newcommand{\comm}[1]{}
\newcommand{\com}[1]{}

\newcommand{\cI}{\cc{I}}
\newcommand{\sopres}[1][q]{{\Pi_{#1}}}
\newcommand{\sopresz}[1][q]{{\Pi_{#1}^0}}
\newcommand{\sppresz}[1][q]{\cc{W}^1_{#1}(\Omega)}
\newcommand{\sppres}[1][q]{\sppresz[#1]+H^1_{#1}(\Omega)}
\newcommand{\sppreshz}[1][q]{\widehat{H}^1_{#1,0}(\hsp)}
\newcommand{\sppresh}[1][q]{\sppreshz[#1]+H^1_{#1}(\hsp)}
\newcommand{\bmpqz}[4][\delta]{{b_{#2}(#3,#4)}} 

\newcommand{\bmmpq}[1][m]{{\bmpqz{#1}{\tp_{#1}}{\tq_{#1}}}}
 
\newcommand{\TTT}{T}
\newcommand{\subti}[1]{\vspace{3mm}\underline{\bf #1).}\vspace{3mm}}
\newcommand{\tdh}[1][{1/2}]{{\del_t^{#1}}} 
\renewcommand{\hsp}{\BR^N_+}
\renewcommand{\bn}{\nu}

\title{\bf On the global well-posedness and decay of a free boundary problem 
of the Navier-Stokes equation in unbounded domains}
\author{Kenta Oishi 
\thanks{
Department of Applied Mathematics,  Waseda University, \endgraf
mailing address: 
Department of Mathematics, Waseda University,
Ohkubo 3-4-1, Shinjuku-ku, Tokyo 169-8555, Japan. \endgraf
e-mail address: kentaoishi@aoni.waseda.jp \endgraf
}
\enskip and 
\enskip Yoshihiro Shibata
\thanks{
Department of Mathematics,  Waseda University, \endgraf
Department of Mechanical Engineering and Materials Science,
University of Pittsburgh, USA \endgraf
mailing address: 
Department of Mathematics, Waseda University,
Ohkubo 3-4-1, Shinjuku-ku, Tokyo 169-8555, Japan. \endgraf
e-mail address: yshibata325@gmail.com \endgraf
Partially supported by Top Global University Project, JSPS 
Grant-in-aid for Scientific Research (A) 17H0109
}
}
\date{}

\begin{document}
\maketitle

\begin{abstract}
In this paper, we establish the unique existence and some decay properties of a global solution 
of a free boundary problem of the incompressible Navier-Stokes equations 
in $L_p$ in time and $L_q$ in space framework
in a uniformly $H^2_\infty$ domain $\Omega\subset \BR^N$ for $N\geq4$. 
We assume the unique solvability of the weak Dirichlet problem for the Poisson equation 
and the $L_q$-$L_r$ estimates for the Stokes semigroup. 
The novelty of this paper is that we do not assume the compactness of the boundary, 
which is essentially used in the case of exterior domains proved by Shibata \cite{Shiba17CIME}. 
The restriction $N\geq 4$ is required to deduce an estimate for the nonlinear term $\bG(\bu)$ arising from $\divv\bv=0$. 
However, we establish the results in the half space $\hsp$ for $N\geq 3$
by reducing the linearized problem to the problem with $\bG=\fb0$, 
where $\bG$ is the right member corresponding to $\bG(\bu)$. 

{\flushleft{{\bf Keywords:} 
free boundary problem; Navier-Stokes equation; global well-posedness, general domain.}}
\end{abstract}


\section{Introduction} \label{sec:intro}

A free boundary problem for the viscous incompressible Navier-Stokes equations 
describes the motion of a fluid in time-dependent domains, 
such as a drop of water, 
an ocean of infinite extent and finite or infinite depth,
or liquid around a bubble. 
The present paper is concerned with the unique existence and decay 
of a global solution to these problems without taking account of surface tension. 
The mathematical problem is defined as finding 
a time-dependent domain $\Omega(t)$ 
in the $N$-dimensional Euclidean space $\BR^N$ where $t\geq0$ is the time variable, 
and the velocity field $\bv=\tr{(v_1(x,t),\ldots,v_N(x,t))}$, 
where $\tr{M}$ is the transposed $M$, 
and the pressure $\press=\press(x,t)$ satisfying the incompressible Navier-Stokes equation 
\begin{align}
\label{NS}
\left\{
\begin{array}{ll}
\del_t\bv + (\bv\cdot\nabla)\bv - \Divv\bS(\bv,\press) = 0, \quad
\divv \bv=0 & \text{in } \Omega(t), \ 0<t<T, \\
\bS(\bv,\press)\nml_t = 0, \quad \bv\cdot\nml_t=V_n
& \text{on } \del\Omega(t), \ 0<t<T, \\
\bv|_{t=0}=\bv_0 & \text{in } \Omega=\Omega(0) 
\end{array}
\right. 
\end{align}
for a given initial velocity field $\bv_0=\tr{(v_{01}(x),\ldots,v_{0N}(x))}$. 
Here, the initial domain $\Omega$ is a general uniformly $H^2_\infty$ domain.  
We denote the unit outer normal vector to the boundary $\del\Omega(t)$ by $\bn_t$, 
and the velocity of the evolution of $\del\Omega(t)$ by $V_n$. 
The stress tensor $\bS(\bv,\press)$ 
is given by $\bS(\bv,\press)=\mu\bD(\bv)-\press\bI$, 
where $\bD(\bu)$ is the doubled deformation tensor $\bD(\bv)$ 
whose $(j,k)$ component is defined by $\del_kv_j+\del_jv_k$ with $\del_j=\del/\del x_j$, 
$\mu>0$ is a positive constant representing the coefficient of viscosity, 
and $\bI$ is the $N\times N$ identity matrix. 
We set $\divv\bv=\sum_{j=1}^N\del_jv_j$ and, 
for an $N\times N$ matrix field $\bM$ whose $(j,k)$ component is $M_{jk}$, 
we define $\Divv \bM$ as the $N$ component vector the $j$-th component of which is $\sum_{k=1}^N\del_kM_{jk}$. 
In this paper, we establish the unique existence theorem of a solution globally in time 
and decay properties of the solution 
by assuming the $L_q$-$L_r$ estimates for the Stokes semigroup
and applying the maximal $L_p$-$L_q$ regularity 
for the time-shifted Stokes problem due to \cite{Shiba17CIME}. 
Moreover, we obtain the global well-posedness and decay properties 
in the half-space $\hsp$ with $N\geq3$. 

The free boundary problem \eqref{NS} 
has been studied extensively in the following two cases: 
\begin{enumerate}
\renewcommand{\labelenumi}{(\arabic{enumi})}
\setlength{\parskip}{0mm} 
\setlength{\itemsep}{0mm} 
\item the motion of an isolated liquid mass, and
\item the motion of the incompressible fluid occupying an infinite ocean.
\end{enumerate}
We mention the studies on the well-posedness globally in time and decay properties in order.   
In case (1), where the initial domain $\Omega$ is bounded, 
the unique existence of a global solution was established 
under the assumptions that the initial velocity $\bv_0$ is small 
and orthogonal to the rigid space $\{A\fb{x}+\fb{b}\mid A+\tr{A}=O\}$
in the frameworks 
by Solonnikov \cite{Sol87IANSSM}, in $L_p$ framework,
and by Shibata \cite{Shiba15DE} in $L_p$-$L_q$ framework. 
When surface tension is taken into account, 
the same result was also proved by Solonnikov \cite{Sol86ZNS} in $L_2$ framework 
under the same assumptions 
and the additional assumption that the domain $\Omega$ is close to a ball. 
In this case, the boundary condition should be 
\begin{align}
\bS(\bv,\press)\bn_t=c_\sigma\cc{H}\bn_t, \quad \bv\cdot\bn_t=V_n \quad \text{on } \del\Omega(t),
\end{align}
where $\cc{H}$ is the doubled mean curvature of $\del\Omega(t)$ 
and $c_\sigma>0$ is the coefficient of surface tension. 
This result was also obtained in H\"{o}lder spaces by Padula and Solonnikov \cite{PadSol02MQ} 
and, in $L_p$ in time and $L_q$ in space setting by Shibata \cite{Shiba18EECT}. 
In case (2), the domain is the layer-like domain given by the form
\begin{align}
\Omega(t)=\{x=\tr{(x',x_N)}\in\BR^N
\mid x'=\tr{(x_1,\cdots,x_{N-1})}\in\BR^{N-1},\,-b(x')<x_N<\eta(x',t)\},
\end{align}
with a free surface on the upper boundary $x_N=\eta(x',t)$
and a fixed bottom on the lower one $x_N=-b(x')$. 
Here, the boundary condition on the lower boundary is zero-Dirichlet: 
\begin{align}
\bv=0 \quad \text{ on } x_N=-b(x'). 
\end{align}
In this domain, in $L_2$ framework, 
the global well-posedness 
was established by Beale \cite{Bea84ARMA} with surface tension
and by Sylvester \cite{Syl90CPDE}, Tani and Tanaka \cite{TaniTana95ARMA}, 
and Guo and Tice \cite{GuoTice13APDE} without surface tension. 
Moreover, Beale and Nishida \cite{BeaNishi85NMS} 
and Hataya and Kawashima \cite{HataKawa09NA} 
proved some decay properties of the solution constructed in \cite{Bea84ARMA}. 
In $L_p$-$L_q$ framework, 
Saito \cite{Sai18JDE} showed the global well-posedness without surface tension. 
Hataya \cite{Hata09Ky} and Guo and Tice \cite{GuoTice13ARMA}  
obtained the unique existence and decay properties
of a global solution periodic in the horizontal direction in an $L_2$ framework.

The unique existence of a global solution to \eqref{NS} 
has been studied also in other domains. 
In exterior domains, Shibata \cite{Shiba17CIME} 
proved the global well-posedness without surface tension in $L_p$-$L_q$ setting. 
In the half-space, 
the global well-posedbess was obtained by Ogawa and Shimizu \cite{OgaShimi21EPE} 
without surface tension 
in $\dot{W}_1(0,\infty;\dot{B}^{-1+\frac{n}{p}}_{p,1}(\hsp))\cap L_1(0,\infty;\dot{B}^{1+\frac{n}{p}}_{p,1}(\hsp))$. 
This result and decay properties 
were developed by Saito and Shibata \cite{SaiShiba20arXiv}
with surface tension in $L_p$-$L_q$ framework. 
However, in the half-space and without taking account of surface tension, 
similar results in the $L_p$-$L_q$ framework have not been shown because of the noncompactness of the boundary $\del\Omega$. 
The analysis in $L_p$-$L_q$ framework seems more convenient 
than that in $L_1$-$\dot{B}^{-1+\frac{n}{p}}_{p,1}$ framework 
to address the lower derivative terms 
when we show the global well-posedness in other domains such as cylinder
, which remain as subject for further study.  
This is the key motivation of this paper.  

In the present paper, in $L_p$-$L_q$ framework, 
we establish the global well-posedness and decay properties of \eqref{NS} 
in the half space for a sufficiently small initial velocity $\bv_0$. 
Moreover, we obtain the same results in general domains 
by using the maximal $L_p$-$L_q$ regularity for the time-shifted Stokes problem, 
which was developed due to Shibata \cite{Shiba17CIME}, 
and by assuming the $L_q$-$L_r$ estimates for the Stokes semigroup. 
We also assume that the initial domain $\Omega\subset\BR^N$ is a uniformly $H^2_\infty$ domain 
and that the weak Dirichlet problem for the Poisson equation admits a unique solution, 
which are satisfied in the domains mentioned above. 
Because the domain is not compact, 
we cannot expect an exponential decay of the solution of the Stokes problem 
and the decay should be only of polynomial order. 
This forces us to restrict the dimension $N$ 
and the exponents $p$ and $q$ of the framework $L_p$-$L_q$, 
especially, when estimating a nonlinear term $\bG(\bu)$ arising from $\divv\bv=0$. 
In exterior domains, the global well-posedness was shown 
by relaxing the restriction from the compactness of the boundary $\del\Omega$. 
Nevertheless, we find the global well-posedness is valid even if the boundary $\del\Omega$ is not compact.
Moreover, in general domains, we obtain this result for $N\geq 4$ if the Stokes semigroup decays as in the half space. 
Moreover, we establish the same result for $N\geq3$ in the half space 
by some reduction of the Stokes problem to the case $\bG=0$, 
where $\bG$ is the right member corresponding to $\bG(\bu)$. 
Here, we take advantage of a good estimate obtained only in half space. (The further details, the reader is referred to Lemma \ref{Lem:diveq}.)  

The remainder of this paper is organized as follows. 
In the next section, we state our main results 
on the global well-posedness of \eqref{NS} and decay properties of the solution
in general domains with $N\geq 4$ and in the half-space with $N\geq 3$. 
Section \ref{sec:gen} is devoted to the proof of the results in general domains. 
The strategy is to prolong the local solution by use of the a priori estimate.
To obtain this estimate, 
we show an estimate for the Stokes problem in Subsection \ref{subsec:linesti:gen} 
and estimates for nonlinear terms in Subsection \ref{subsec:nonlinesti:gen}. 
In Section \ref{sec:half}, 
we show that the reduction mentioned above 
allows us to take $N=3$ in these results if $\Omega=\hsp$. 
The reduction will be performed in Subsection \ref{subsec:linesti:half} 
while Subsection \ref{subsec:ass:half} and Subsection \ref{subsec:nonlinesti:half} 
are devoted to the proof of $L_q$-$L_r$ estimates 
and estimates of the nonlinearities, respectively.
Finally, Section \ref{sec:conclusion} concludes the paper.


\section{Main results} \label{sec:}
In this section, we introduce notation and several functional spaces 
and then present the statements of our main results. 

\commm{Notation}
We denote the set of all natural numbers and real numbers by $\bb{N}$ and $\BR$, respectively. 
Let 
\begin{align}
<t>=(1+t^2)^{1/2}
\end{align}
for $t\in\BR$. 
Given a scalar function and an $N$-vector function $\bff=(f_1(x),\cdots,f_N(x))$, let 
\begin{align}
\nabla f &= (\partial_1f,\cdots,\partial_N f)^\mathsf{T},& 
\nabla \bff&= (\partial_jf_i)_{1\leq i,j\leq N} \\
\nabla^2f &= (\partial^\alpha f \mid |\alpha|=2),& 
\qquad \nabla^2\bff&= (\partial^\alpha f_j \mid |\alpha|=2, \ j=1,\cdots,N).
\end{align}
For a domain $D$, scalar functions $f,g$ and $N$-vector functions $\bff,\bg$, 
we define the normal part $\bff_\nml$ and tangential part $\bff_\tau$ of $\bff$ as 
\begin{align}
\bff_\nml=\nml\cdot\bff, \quad \bff_\tau=\bff-\bff_\nml\nml
\end{align} 
and let 
\begin{align}
(f,g)_D &= \int_D f(x)g(x) \,dx, 
& (\fb{f},\fb{g})_D &= \int_D \fb{f}(x)\cdot{\fb{g}}(x) \,dx, 
\end{align}
where $\fb{a}\cdot\fb{b} = \sum_{i=1}^N a_ib_i$ 
for $\fb{a}=(a_1,\cdots,a_N)^\mathsf{T}$ and $\fb{b}=(b_1,\cdots,b_N)^\mathsf{T}$. 
For a Banach space $X$ with a norm $\|\cdot\|_X$ and $d\in\bb{N}$, 
the $d$-product of $X$ is denoted by $X^d$, 
and the norm is expressed as $\|\cdot\|_X$ instead of $\|\cdot\|_{X^d}$ for brevity.  
The space of all bounded linear operators from $X$ to $X$ is denoted by $\cc{L}(X)$. 

Let $p,q\in[1,\infty]$, $m\in\bb{N}\cup\{0\}$ and $s\in\BR$, 
and let $D$ be a domain and $X$ a Banach space. 
The symbols $L_q(D;X)$, $H^m_q(D;X)$ and $B^{s}_{p,q}(D)$ 
denote the $X$-valued Lebesgue space, $X$-valued Sobolev spaces and Besov space, respectively, 
and we set $L_q(D)=L_q(D;\BR)$ and $H^m_q(D)=H^m_q(D;\BR)$. 
Note that $H^0_q(D;X)=L_q(D;X)$ and $H^0_q(D)=L_q(D)$. 
By $C_0^\infty(D)$, denote the set of all $C^\infty$ functions 
whose supports are compact and contained in $D$. 
We define the functional spaces 
\begin{align} \label{def:funcsp}
H^1_{q,0}(D)&=\{\theta\in H^1_q(D)\mid \theta|_{\del\Omega}=0\}, 
\\ \widehat{H}^1_{q,0}(D) &= \{\theta\in L_{q,\loc}(\overline{\Omega}) 
\mid \nabla\theta\in L_{q}(\Omega)^N, \ \theta|_{\del\Omega}=0\}, 
\\ \widehat{H}^{-1}_{q}(\Omega) &= \text{ dual of } \widehat{H}^{1}_{q',0}(\Omega), 
\\ H^{1/2}_{p}(\BR;X)&=\{G\in L_{p}(\BR;X)\mid \tdh G\in L_{p}(\BR;X)\}. 
\end{align}
Here, 
\begin{align}
\tdh{}f(t) = \IFT{}[|\tau|^{1/2}\FT{}[f](\tau)]
\end{align}
and the Fourier transform $\FT$ and its inverse transform $\IFT$ are defined by 
\begin{align} 
\s{F}[f](\tau) &= \int_{-\infty}^\infty e^{-it\tau} f(t) \,dt, & 
\s{F}^{-1}[f](t) &= \frac{1}{2\pi} \int_{-\infty}^\infty e^{i\tau t} f(\tau) \,d\tau 
\end{align}
for a function $f$ defined on $\BR$. 

To describe the nonlinear terms, 
for $m$-vector $\bu=(u_1,\cdots,u_m)$ and $n$-vector $\bv=(v_1,\cdots,v_n)$, 
we let $(\bu,\bv)=(u_1,\cdots,u_m,v_1,\cdots,v_n)$ 
and define $\bu\otimes\bv$ as the $mn$ vector 
whose $k$-th component is given by $u_{i}v_{j}$, 
where $(i,j)$ is the $k$-th couple of the set 
$\{(i',j')\mid 1\leq i'\leq m,\,1\leq j'\leq n\}$ 
in lexicographical order. 
Similarly, for $\ell\in\bb{N}\cup\{0\}$, 
we regard $\nabla^\ell\bu$ as the $N^\ell m$-vector 
the $k$-th component of which is given by $\del_{j_1}\cdots\del_{j_\ell}u_{i}$, 
where $(j_1,\cdots,j_\ell,i)$ is the $k$-th couple of the set 
$\{(j_1',\cdots,j_\ell',i')\mid 1\leq j_1',\cdots,j_\ell'\leq N,\,1\leq i'\leq m\}$ 
in lexicographical order. 
Then, for example, for an $\ell\times N^3mn$ matrix 
\begin{align}
\bV=(V_{k,(j_1,j_2,j_3,i,j)})_{1\leq k\leq \ell,1\leq j_1,j_2,j_3\leq N,1\leq i\leq m,1\leq j\leq m}, 
\end{align}
we can regard $\bV(\nabla^2\bu\otimes\nabla\bv)$ 
as the $\ell$-vector, with $k$-th component being 
\begin{align}
\sum_{j_1,j_2,i,j_3,j}V_{k,(j_1,j_2,j_3,i,j)}\del_{j_1}\del_{j_2}u_i\del_{j_3}v_j. 
\end{align}
Finally, the letter $C$ denotes generic constants 
and $C_{a,b,\cdots}$ stands for constants depending on the quantities $a,b,\cdots$. 
Both constants $C$ and $C_{a,b,\cdots}$ may vary from line to line.

\commm{Reduction to fixed domain}
We reduce the free boundary problem \eqref{NS} in the time-dependent domain $\Omega(t)$ 
to a quasilinear problem in the fixed domain $\Omega$.
Then, we provide our main results for the latter problem. 
To do so, we formulate the problem \eqref{NS} in Lagrange coordinates instead of Euler coordinates 
by employing the Lagrange transformation. 
\begin{gather} \label{eq:Ltrans}
x=y + \int_0^t \bu(y,s) \, ds \equiv \bX_\bu(y,t) \\ \notag
\bu = (u_1(y,t),\cdots,u_N(y,t))=\bv(\bX_\bu(y,t),t), \quad \pres(y,t)=\pi(\bX_\bu(y,t),t). 
\end{gather}
By the argument in \cite[Appendix A]{ShibaShimi07DIE}, 
the functions $\bu,\pres$ satisfies the following equation. 
\begin{align} \label{LNS} 
\left\{
\begin{array}{ll}
\del_t\bu - \Divv\bS(\bu,\pres) = \bff(\bu), \quad \divv \bu=g(\bu)=\divv \bg(\bu) 
& \text{in } \Omega\times(0,T), \\
\bS(\bu,\pres)\nml = \bh(\bu)\bn & \text{on } \del\Omega\times(0,T), \\
\bu|_{t=0}=\bv_0 & \text{in } \Omega 
\end{array}
\right.
\end{align}
where the nonlinearities $\bff(\bu)$, $g(\bu)$, $\bg(\bu)$ and $\bh(\bu)$ are defined by
\begin{align} \label{def:nonlin}
\begin{aligned}
\bff(\bu) &= \bV^1\left( \int_0^t \nabla\bu\,ds \right)(\del_t\bu,\nabla^2\bu) 
+ \bW\left( \int_0^t \nabla\bu\,ds \right) (\int_0^t \nabla^2\bu \,ds \otimes \nabla\bu), \\
\bg(\bu) &= \bV^2\left( \int_0^t \nabla\bu\,ds \right)\bu, \\ 
g(\bu) &= \bV^3\left( \int_0^t \nabla\bu\,ds \right)\nabla\bu, \quad 
\bh(\bu) = \bV^4\left( \int_0^t \nabla\bu\,ds \right)\nabla\bu, 
\end{aligned}
\end{align}
with some matrix-valued polynomials $\bV^1,\bV^2,\bV^3,\bV^4$ and $\bW$ 
with $\bV^i(\fb O)=\fb O$.  
The symbol $\fb O$ stands for the zero matrix. 

\commm{Assumptions}
To establish the global well-posedness of \eqref{LNS} and the decay of the solution, 
the appropriate decay properties of the solution must be proven for 
the linearlized problem associated with \eqref{LNS}, 
which is called the Stokes initial value problem. 
\begin{align} \label{S}
\left\{
\begin{array}{ll}
\del_t\bU - \Divv\bS(\bU,\Pres) = \bF, \quad
\divv \bU=G=\divv\bG & \text{in } \Omega\times(0,T), \\
\bS(\bU,\Pres)\nml = \bH & \text{on } \del\Omega\times(0,T), \\
\bU|_{t=0}=\bv_0 & \text{in } \Omega. 
\end{array}
\right. 
\end{align}
Nevertheless, the decay properties have not been developed in general domains.  
In this paper, we focus on the case 
\begin{align}
\{\lambda\in\bb{C}\setminus\{0\}\mid |\arg\lambda|<\pi-\veps\}\subset \rho(A_q)
\text{ but } 0\not\in\rho(A_q)
\end{align}
with $\veps\in(0,\pi/2)$, 
where $\rho(A_q)$ is the resolvent set of the Stokes operator $A_q$, and we 
assume the $L_q$-$L_r$ estimates for the Stokes semigroup. 
To obtain decay properties, 
we consider a time-shifted problem, \eqref{SS} below, whose solution decays sufficiently fast, 
and then compensate it 
by estimating the difference of solutions to \eqref{S} and the time-shifted problem 
from the $L_q$-$L_r$ estimates.

Below, we \lat{state} the assumptions of our main theorem in order. 
We begin with the assumption that the domain $\Omega$ is a uniform $H^{2}_{\infty}$ domain. 

\comm{Ass1: $H^{2-1/r}_{r}$}
\begin{Ass} \label{Ass:unifdom}
There exist positive constants $\alpha$, $\beta$ and $K$ 
such that for any $x_0=(x_{0,1},\cdots,x_{0,N}) \in \del\Omega$, 
there exist a coodinate number $j$ and a function $h\in H^{2}_\infty(B_\alpha'(x_0'))$
with $\|h\|_{H^{2}_{\infty}(B_\alpha'(x_0'))}\leq K$ satisfying 
\begin{align}
\Omega\cap B_\beta(x_0) 
&= \{ x\in\R^N \mid x_j>h(x'), \ \forall x'\in B_\alpha'(x_0') \} \cap B_\beta(x_0),  
\\ \del\Omega\cap B_\beta(x_0) 
&= \{ x\in\R^N \mid x_j=h(x'), \ \forall x'\in B_\alpha'(x_0') \} \cap B_\beta(x_0),  
\end{align} 
where 
\begin{align}
x'=(x_1,\cdots,x_{j-1},x_{j+1},\cdots,x_N), 
\ x_0'=(x_{0,1},\cdots,x_{0,j-1},x_{0,j+1},\cdots,x_{0,N}), 
\\ B_\alpha'(x_0')=\{ x'\in\R^{N-1} \mid |x'-x_0'|<\alpha \}, 
\ B_\beta(x_0)=\{ x\in\R^{N} \mid |x-x_0|<\beta \}.  
\end{align}
\end{Ass} 
\comm{Ass2: US wDNp}
We apply the unique solvability 
of the weak Dirichlet\DBC{}{-Neumann} problem for the Poisson equation to reduce the linearized problem to a problem without the divergence condition. 
This unique solvability is known only for $q=2$. Hence, we assume it in the present paper. 
This assumption is reasonable 
because the resolvent estimate for the Stokes resolvent problem cannot be obtained 
if the unique solvability does not hold. (See \cite[Remark 1.7]{Shiba13MFM}.) 
\begin{Ass} \label{Ass:wDp} 
The following assertion holds
for $\cc{W}^1_q(\Omega) = \widehat{H}^{1}_{q,\DBC{0}{\Gamma}}(\Omega)$ 
or $\cc{W}^1_q(\Omega) = {H}^{1}_{q,\DBC{0}{\Gamma}}(\Omega)$. 
For any $q\in(1,\infty)$ and $\bff\in L_{q}(\Omega)^N$, the problem 
\begin{align} \label{wDp}
(\nabla\theta,\nabla\varphi)_\Omega = (\bff,\nabla\varphi)_\Omega 
\text{ for all } \varphi\in\cc{W}^1_{q'}(\Omega)
\end{align}
admits a unique solution $\theta\in\cc{W}^1_q(\Omega)$ satisfying the estimate
\begin{align} \label{esti:wDp}
\|\nabla\theta\|_{L_{q}(\Omega)} \leq C\|\bff\|_{L_{q}(\Omega)}. 
\end{align}  
Moreover, for any $r\in(1,\infty)$, if $\bff\in L_r(\Omega)^N$ as well as $\bff \in L_q(\Omega)^N$, then, 
$\bu$ satisfies 
\begin{align} 
(\nabla\theta,\nabla\varphi)_\Omega = (\bff,\nabla\varphi)_\Omega, 
\text{ for all } \varphi\in\cc{W}^1_{r'}(\Omega)
\end{align}
and the estimate $\|\nabla\theta\|_{L_{r}(\Omega)} \leq C\|\bff\|_{L_{r}(\Omega)}$. 
\end{Ass}
\comm{Rem1: wDNp}
\begin{Rem} \label{Rem:wDNp}
The unique existence of the weak Dirichlet problem \eqref{wDp} is obtained 
in the half-space, bounded domains, exterior domains, 
perturbed half-spaces, and layer domains. 
For details and more examples of domains in which the problem \eqref{wDp} is uniquely solvable, 
see \cite[Example 1.6]{Shiba13MFM}. 
\end{Rem}

Define the solution operator $\sopresz$ of the weak Dirichlet problem \eqref{wDp} by 
\begin{align} \label{def:sopres0}
\sopresz:L_{q}(\Omega)\to\cc{W}^1_q(\Omega):\sopresz\bff = \theta.
\end{align}
Note that $\sopresz\bff=\sopresz[r]\bff$ if $\bff\in L_{q}(\Omega)^N\cap L_{r}(\Omega)^N$. 

\com{Reduction to RS} 
We now introduce the Stokes semigroup 
by following the arguments in \cite[Section 4]{GruSol91MS}, 
see also \cite[p. 159, 160]{ShibaShimi08RAM}.
We assume that Assumptions \ref{Ass:unifdom} and \ref{Ass:wDp} hold. 
We consider the Stokes initial value problem 
\begin{align} \label{Sv0}
\left\{
\begin{array}{ll}
\del_t\bU - \Divv\bS(\bU,\Pres) = \bF, \quad
\divv \bU=0 & \text{in } \Omega\times(0,\infty), \\
\bS(\bU,\Pres)\nml = 0 & \text{on } \del\Omega\times(0,\infty), \\
\bU|_{t=0}=\bv_0 & \text{in } \Omega 
\end{array}
\right. 
\end{align}
for $\bF\in L_{p}(0,\infty;J_{q}(\Omega))$ and $\bv_0\in J_q(\Omega)$, 
where $J_q(\Omega)$ is the solenoidal space 
\begin{align} \label{def:solesp}
J_q(\Omega) = \{ \bU\in L_q(\Omega)^N \mid (\bU,\nabla\varphi)_\Omega=0 
\text{ for any } \varphi\in\cc{W}^1_{q'}(\Omega) \}. 
\end{align}
Then, $\divv\bF(t)=0$ a.e. $t\in(0,\infty)$ and $\divv\bv_0=0$. 
By multiplying $\divv$ to the first equation, 
by the normal component of the boundary condition 
and by applying $\divv\bU=0$, 
we obtain a system for the pressure $\Pres$, as given below.
\begin{align} \label{pres}
\left\{\begin{array}{ll}
\Delta{\Pres}=0 & \text{in } \Omega, \\ 
\Pres=2\mu\del_\nml \bU_\nml-\text{div}_{}\,{\fb{U}}& \text{on } \partial\Omega, 
\end{array}\right.
\end{align}
where $\bff_\nml=\nml\cdot\bff$, $\del_\nml f=\nml\cdot\nabla f$ 
for given functions $\bff=(f_1(x),\cdots,f_n(x))$ and $f=f(x)$. 
The solution operator of this system is given by 
\begin{align} \label{def:sopres}
\quad 
\sopres:H^{2}_{q}(\Omega)\to\cc{W}^1_q(\Omega):
\sopres\bU=(2\mu\del_\nml\bU_\nml-\divv\bU)-\sopresz\nabla(2\mu\del_\nml\bU_\nml-\divv\bU), 
\quad 
\end{align}
where the operator $\sopresz$ is defined by \eqref{def:sopres0}.  
In fact, $\theta=\Pres-(2\mu\del_\nml\bU_\nml-\divv\bU)$ obeys
\begin{align}
\left\{\begin{array}{ll}
\Delta{\theta}=-\Delta(2\mu\del_\nml\bU_\nml-\divv\bU) & \text{in } \Omega, \\ 
\theta=0& \text{on } \partial\Omega, 
\end{array}\right.
\end{align}
whose weak formulation is given by \eqref{wDp} with 
$\bff=-\nabla(2\mu\del_\nml\bU_\nml-\divv\bU)$. 
\comm{RSR, Stokes op, Stokes sg}
Then, the Stokes initial value problem \eqref{Sv0} can be reduced to the problem 
\begin{align} \label{RSv0}
\left\{
\begin{array}{ll}
\del_t\bU - \Divv\bS(\bU,\sopres(\bU)) = \bF & \text{in } \Omega\times(0,\infty), \\
\bS(\bU,\sopres(\bU))\nml = 0 & \text{on } \del\Omega\times(0,\infty), \\
\bU|_{t=0}=\bv_0 & \text{in } \Omega. 
\end{array}
\right. 
\end{align}
Note that the second equation $\text{div}_{}\,{\fb{u}}=0$ can be recovered 
by the uniqueness of solutions to the initial value problem 
for the heat equation subject to the Dirichlet boundary condition obeyed by $\text{div}_{}\,{\fb{u}}$. (see \cite[p. 243]{GruSol91MS}. )
We now define the Stokes operator \lat{on $J_q(\Omega)$} as 
\begin{align} \label{def:Aq}
\begin{aligned}
D(A_q)&=\{ \fb{U}\in J_q(\Omega)\cap{H^2_q(\Omega)}^N 
\mid \fb{S}(\fb{U},\Pi(\fb{U}))\nu=0 \text{ on } \partial\Omega \}, 
\\ A_q{\fb{U}}&= -\text{Div}_{}\,{\fb{S}}(\fb{U},\Pi(\fb{U})). 
\end{aligned}
\end{align}
Then, \eqref{RSv0} is rewritten to the Cauchy problem 
\begin{align} \label{Cv0}
\del_t\bU+A_q\bU=\bF, \quad \bU|_{t=0}=\bv_0. 
\end{align}
Note that, for any $q,r\in(1,\infty)$, $A_q\bU=A_r\bU$ if $\bU\in D(A_q)\cap D(A_r)$. 

\comm{Prop1: generation of Stokes sg} 
The following proposition on the generation of the Stokes semigroup 
is guaranteed by \cite[Theorem 2.5]{Shiba14DIE}. 
For the details of the proof, the reader is referred to \cite[Lemma 3.7]{ShibaShimi08RAM}.  
\begin{Prop}[{\cite{Shiba14DIE}}] 
Assume that Assumptions 1 and 2 hold. 
Then, the Stokes operator $A_q$ generates an analytic semigroup $\{e^{-tA_q}\}_{t\geq0}$ 
of class $C^0$ on $J_q(\Omega)$ for $1<q<\infty$.
\end{Prop}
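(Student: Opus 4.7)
The plan relies on the classical route to analytic semigroup generation via resolvent estimates. By standard semigroup theory, it suffices to exhibit constants $\gamma_0>0$ and $\veps\in(0,\pi/2)$ such that the sector
$$\Sigma_{\veps,\gamma_0}=\{\lambda\in\bb{C}\mid |\arg\lambda|<\pi-\veps,\,|\lambda|\geq\gamma_0\}$$
is contained in $\rho(-A_q)$ and the uniform bound
$$\|(\lambda I+A_q)^{-1}\bF\|_{L_q(\Omega)}\leq C|\lambda|^{-1}\|\bF\|_{L_q(\Omega)}$$
holds for all $\bF\in J_q(\Omega)$ and $\lambda\in\Sigma_{\veps,\gamma_0}$. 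This gives the analyticity, and the density of $D(A_q)$ in $J_q(\Omega)$ (inherited from $C_0^\infty$-density) yields the $C^0$ property.

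The analytic core is the solvability of the generalized Stokes resolvent problem: given $\bF\in J_q(\Omega)$, find $(\bU,\Pres)\in H^2_q(\Omega)^N\times \sppresz[q]$ with
$$\lambda\bU-\Divv\bS(\bU,\Pres)=\bF,\qquad \divv\bU=0\text{ in }\Omega,\qquad \bS(\bU,\Pres)\nml=0\text{ on }\del\Omega.$$
Assumption \ref{Ass:wDp} lets me eliminate the pressure via the solution operator $\sopres$ from \eqref{def:sopres}, so that the problem is reduced to an equation for $\bU$ alone, and an $H^2_q$-estimate on $\bU$ automatically controls $\nabla\Pres$ in $L_q$.

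I would then employ the localization method tailored to uniformly $H^2_\infty$ domains. Using Assumption \ref{Ass:unifdom}, choose a locally finite covering $\{B_\beta(x_j)\}$ of $\overline{\Omega}$ with a subordinate partition of unity $\{\zeta_j\}$ whose derivatives are bounded uniformly in $j$; each boundary ball is $H^2_\infty$-equivalent to a piece of $\hsp$. For interior pieces I would invoke the whole-space resolvent theory of the Stokes operator, while for boundary pieces I would appeal to the half-space resolvent theory with the free boundary condition $\bS(\bU,\Pres)\nml=0$. Both model problems admit $\cc{R}$-bounded solution operator families in $\Sigma_{\veps,0}$, which in particular yield the pointwise resolvent bound after straightening the boundary and freezing the coefficients.

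Gluing the local resolvents against the partition of unity produces a parametrix $S_\lambda$ satisfying $(\lambda I+A_q)S_\lambda=I+K_\lambda$, where $K_\lambda$ collects the commutators $[\Divv\bS(\cdot,\sopres(\cdot)),\zeta_j]$ together with the error coming from freezing the variable coefficients of the flattened boundary charts. The main obstacle is bounding $K_\lambda$: for bounded or exterior domains one would invoke compactness, but here $\del\Omega$ is noncompact, so one must instead exploit the smoothing factor $|\lambda|^{-1/2}$ gained by each commutator (since commuting with $\zeta_j$ lowers the differential order by one) together with the \emph{uniform} $H^2_\infty$ estimates on the charts, which keep all constants independent of $j$. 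Choosing $\gamma_0$ large enough forces $\|K_\lambda\|_{\cc{L}(J_q(\Omega))}<1/2$ for $\lambda\in\Sigma_{\veps,\gamma_0}$, so a Neumann series inverts $I+K_\lambda$ and the desired resolvent estimate follows. Once this is established on $\Sigma_{\veps,\gamma_0}$, standard analytic semigroup theory (the Kato--Hille--Yosida criterion) delivers the generation statement, completing the proof along the lines of \cite{Shiba14DIE} and \cite{ShibaShimi08RAM}.
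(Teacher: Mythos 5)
The paper does not prove this Proposition itself; it cites \cite[Theorem 2.5]{Shiba14DIE} and refers to \cite[Lemma 3.7]{ShibaShimi08RAM} for details, and your outline --- a sectorial resolvent bound obtained by eliminating the pressure through the weak Dirichlet operator, localizing over the uniform $H^2_\infty$ charts of Assumption 1, invoking the $\cc{R}$-bounded model solution operators in $\BR^N$ and $\BR^N_+$, and inverting the resulting parametrix for $|\lambda|\geq\gamma_0$ by exploiting the $|\lambda|^{-1/2}$ gain in the commutator terms together with the uniformity of the chart constants --- is precisely the route those references take. The one small slip is notational: after eliminating the pressure, $\Pres=\Pi_q(\bU)$ lives in $\cc{W}^1_q(\Omega)+H^1_q(\Omega)$ rather than in $\cc{W}^1_q(\Omega)$ alone, because the trace correction $2\mu\del_\nml\bU_\nml-\divv\bU$ in the definition \eqref{def:sopres} contributes an $H^1_q(\Omega)$ component.
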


We often write $e^{-tA_q}\bff$ even for $\bff\in J_r(\Omega)$ instead of $e^{-tA_r}\bff$ 
because $e^{-tA_q}\bff=e^{-tA_r}\bff$
for $q,r\in(1,\infty)$ and $\bff \in J_{q}(\Omega)\cap J_{r}(\Omega)$. 
In fact, by repeating the argument in \cite{Shiba13MFM} 
in $J_{q}(\Omega) \cap J_{r}(\Omega)$ instead of $J_{q}(\Omega)$,
we obtain $(\lambda+A_q)^{-1}\bff=(\lambda+A_r)^{-1}\bff$. 
Then, the formula 
\begin{align}
e^{-tA_q}\bff = \frac{1}{2\pi i}\int_\Gamma (\lambda+A_q)^{-1}\bff \,ds, 
\end{align} 
concludes $e^{-tA_q}\bff=e^{-tA_r}\bff$. 

\comm{Lq-Lr esti}

The $L_q$-$L_r$ estimates are stated as follows. 
Because the decay rate changes according to the domain, (see Remark \ref{Rem:LqLresti}) 
we consider the general rate 
and, in the statement of the main theorem, 
state the type of rate needed to obtain the global well-posedness. 
\begin{Def} \label{Def:LqLrEsti}
Let the decay rate $\sigma_m(q,r)$ be a function defined 
for $m=0,1,2$ and $q,r\in(1,\infty]$ with $q\leq r$. 
We say that the $L_q$-$L_r$ estimates hold for the decay rate $\sigma_m(q,r)$ if, 
for $(q,r)$ satisfying $1<q\leq r\leq\infty$ and $q\ne\infty$, 
there exists $C=C(q,r)>0$ such that 
\begin{align} \label{esti:LqLr}
\begin{aligned}
\| (\del_te^{-tA_q}\bff,\nabla^2e^{-tA_q}\bff) \|_{L_{r}(\Omega)} 
&\leq Ct^{-\sigma_2(q,r)}\|\bff\|_{L_{q}(\Omega)} \quad (r\ne\infty) \\ 
\| \nabla^m e^{-tA_q}\bff \|_{L_{r}(\Omega)} 
&\leq Ct^{-\sigma_m(q,r)}\|\bff\|_{L_{q}(\Omega)} \quad (m=0,1) 
\end{aligned}
\end{align}
for $t\geq1$ and $\bff\in J_{q}(\Omega)$. 
\end{Def}

\begin{Rem} \label{Rem:LqLresti}
\begin{enumerate}
\renewcommand{\labelenumi}{(\arabic{enumi})}
\item 
In $\bb{R}^N$ and $\bb{R}^N_+$, 
the $L_q$-$L_r$ estimates hold for the decay rate 
\begin{align} \label{eq:decayLpLq}
\sigma_m(q,r)=\frac{N}{2}\left(\frac{1}{q}-\frac{1}{r}\right)+\frac{m}{2}.
\end{align}
The second inequality of \eqref{esti:LqLr} 
was studied in \cite[equation (2.3)]{Kato84MZ} in $\bb{R}^N$, 
and, in $\bb{R}^N_+$, is proven in Subsection \ref{subsec:ass:half}  
below from the resolvent estimates for the resolvent Stokes problem 
provided by to Shibata and Shimizu \cite{ShibaShimi03DIE}. 
The first inequality is obtained as in Subsection \ref{subsec:ass:half}. 
\item 
In exterior domain, the $L_q$-$L_r$ estimates hold for the decay rate 
\begin{align} \label{eq:decayLpLqext}
\sigma_m(q,r)=\left\{ \begin{array}{ll}
\frac{N}{2}\left(\frac{1}{q}-\frac{1}{r}\right)+\frac{m}{2} & m=0,1, \\ 
\min \left\{ \frac{N}{2}\left(\frac{1}{q}-\frac{1}{r}-\delta_0\right)+1, 
\frac{N}{2q} + \frac{1}{2} \right\} & m=2
\end{array} \right.
\end{align}
for sufficiently small $\delta_0>0$. 
The second inequality was proven by Shibata \cite[Theorem 1]{Shiba18AA} 
and first one is shown as in Subsection \ref{subsec:ass:half} 
from the resolvent estimates by Shibata \cite[Theorem 2]{Shiba18AA}. 
\end{enumerate}
\end{Rem}

\comm{SMR}

The sufficiently fast decay of the solution to the time-shifted Stokes problem
\begin{align}
\label{SS}
\left\{
\begin{array}{ll}
\del_t\bU + \lambda_0\bU - \Divv\bS(\bU,\Pres) = \bF, \
\divv \bU=G=\divv\bG & \text{in } \Omega\times(0,T), \\
\bS(\bU,\Pres)\nml = \bH & \text{on } \del\Omega\times(0,T), \\
\bU|_{t=0}=\bv_0 & \text{in } \Omega
\end{array}
\right. 
\end{align}
is justified by \cite[equation (3.600)]{Shiba17CIME}, 
which is valid for general domains satisfying Assumptions \ref{Ass:unifdom} and \ref{Ass:wDp}. 
To provide a statement of it, we define the space for initial velocity by 
\begin{align}
\cc{D}_{q,p}(\Omega) = (J_q(\Omega),D(A_q))_{1-1/p,p} \subset B^{2(1-1/p)}_{q,p}(\Omega)
\end{align}
for $p,q\in(1,\infty)$, where $(\cdot,\cdot)_{1-1/p,p}$ is real interpolation functor. 

\begin{Rem} \label{Rem:Dqp}
The space $\cc{D}_{q,p}(\Omega)$ is characterized as follows. 
(see \cite[Lemma 2.4]{Ste06MFM}) 
\begin{align}
\cc{D}_{q,p}(\Omega) = \left\{ \begin{array}{l}
\{\bv_0\in J_q(\Omega) \cap B^{2(1-1/p)}_{q,p}(\Omega) \mid [\bD(\bv_0)\nml]_\tau = 0 \}
\\ \qquad \textup{if } 2(1-1/p)-1/q>1, 
\\ J_q(\Omega) \cap B^{2(1-1/p)}_{q,p}(\Omega) \quad \textup{if } 2(1-1/p)-1/q<1. 
\end{array} \right.
\end{align}
\end{Rem}

\begin{Thm}[{\cite{Shiba17CIME}}] \label{Thm:SMR}
Let $1<p,q<\infty$ and $T\in(0,\infty]$.  
Assume Assumptions 1 and 2. 
For the right members $\bF,\bG,G,\bH,\bv_0$, 
assume that $\bF,\bv_0$, and some extensions $\bG_b,G_b,\bH_b$ 
respectively of $<t>^b\bG,<t>^bG,<t>^b\bH$ satisfy 
\begin{align} \label{eq:DataClassMR}
\begin{gathered}
\bv_0\in \cc{D}_{q,p}(\Omega), 
\quad <t>^b\bF\in L_{p}(0,T;L_{q}(\Omega)^N), 
\quad \bG_b \in H^1_{p}(\BR;L_{q}(\Omega)^N), 
\\ G_b \in H^{1/2}_{p}(\BR;L_{q}(\Omega))\cap L_{p}(\BR;H^{1}_{q}(\Omega)), 
\quad \bH_b \in H^{1/2}_{p}(\BR;L_{q}(\Omega)^N)\cap L_{p}(\BR;H^{1}_{q}(\Omega)^N), 
\end{gathered}
\end{align}
the compatibility condition
\begin{align} \label{eq:DataCCMR}
(G_b(t),\varphi)_\Omega = (\bG_b(t),\nabla\varphi)_\Omega 
\text{ a.e. $t\in\BR$ for any } \varphi\in\cc{W}^{1}_{q'}(\Omega) 
\end{align}
and $(\bG,G,\bH)|_{t=0}=(\fb0,0,\fb0)$. 
Then, the problem \eqref{SS} admits unique solutions 
\begin{gather}
\bU \in H^{1}_{p}(0,T;L_{q}(\Omega)^N) \cap L_{p}(0,T;H^{2}_{q}(\Omega)^N), \ 
\Pres \in L_{p}(0,T;\sppres). 
\end{gather}
Moreover, for $b\geq0$, the solution satisfies the following estimate. 
\begin{align} \label{esti:MR}
\begin{aligned}
&\|<t>^b\del_t\bU\|_{L_{p}(0,T;L_{q}(\Omega))}+\|<t>^b\bU\|_{L_{p}(0,T;H^2_{q}(\Omega))}
+\|\nabla\Pres\|_{L_{p}(0,T;L_{q}(\Omega))} 
\\ & \begin{aligned}
\leq C (&\|<t>^b\bF\|_{L_{p}(0,T;L_{q}(\Omega))}
+\|(\bG_b,\del_t\bG_b)\|_{L_{p}(\BR;L_{q}(\Omega))}
\\ & +\|\tdh (G_b,\bH_b)]\|_{L_{p}(\BR;L_{q}(\Omega))}
+\|(G_b,\bH_b)\|_{L_{p}(\BR;H^1_{q}(\Omega))}
+\|\bv_0\|_{B^{2(1-1/p)}_{q,p}(\Omega)}), 
\end{aligned}
\end{aligned}
\end{align}
where the constant $C$ is independent of $T$ and dependent on $b$. 
\end{Thm}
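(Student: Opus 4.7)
The plan is to derive the theorem in three stages: (i) a reduction to zero divergence and zero Neumann data using Assumption \ref{Ass:wDp}; (ii) unweighted ($b=0$) maximal $L_p$-$L_q$ regularity for the reduced Cauchy problem via $\cc{R}$-boundedness of the resolvent family of $\lambda_0+A_q$ together with the Weis operator-valued Fourier multiplier theorem; and (iii) a weight-transport argument that promotes the unweighted estimate to the weighted one.

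For stage (i), I would absorb the divergence data by writing $\bU=\tilde{\bU}+\bW$, where $\bW$ is built so that $\divv\bW=G_b$. Using Assumption \ref{Ass:wDp}, let $\theta$ solve $(\nabla\theta,\nabla\varphi)_\Omega=(\bG_b,\nabla\varphi)_\Omega$ in $\cc{W}^1_q(\Omega)$ and set $\bW=\bG_b-\nabla\theta$; the compatibility condition \eqref{eq:DataCCMR} then gives $\divv\bW=G_b$, while \eqref{esti:wDp} transfers the $H^1_p(\BR;L_q)\cap L_p(\BR;H^1_q)$ regularity of $(\bG_b,G_b)$ to $\bW$. The Neumann data $\bH_b$ is handled similarly by a trace lifting of matching mixed time-space regularity $H^{1/2}_p(\BR;L_q)\cap L_p(\BR;H^1_q)$, modifying $\bF$ only by a controlled remainder.

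For stage (ii), the reduced problem becomes $\del_t\bU+(\lambda_0+A_q)\bU=\tilde{\bF}$ with $\bU|_{t=0}=\bv_0$ in $J_q(\Omega)$. The shift $\lambda_0>0$ places the spectrum of $-(\lambda_0+A_q)$ in a sector uniformly bounded away from the imaginary axis, and the family $\{\lambda(\lambda+\lambda_0+A_q)^{-1},\,\nabla^2(\lambda+\lambda_0+A_q)^{-1}\}$ is $\cc{R}$-bounded on this sector, which is the content of the resolvent estimates established in \cite{Shiba17CIME} for uniform $H^2_\infty$ domains. The Weis operator-valued Fourier multiplier theorem on the UMD space $L_p(\BR;L_q(\Omega))$ then converts this $\cc{R}$-boundedness into maximal $L_p$-$L_q$ regularity, with the initial data handled via the trace characterization of $\cc{D}_{q,p}(\Omega)$ from Remark \ref{Rem:Dqp}; this yields \eqref{esti:MR} in the unweighted case $b=0$.

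For stage (iii), set $\bU_b=<t>^b\bU$ and $\Pres_b=<t>^b\Pres$; since $\del_t<t>^b=bt<t>^{b-2}$, the pair solves the same shifted Stokes system but with forcing $<t>^b\bF+bt<t>^{b-2}\bU$, divergence data $<t>^b G=\divv(<t>^b\bG)$, Neumann data $<t>^b\bH$, and the same initial value $\bv_0$. Applying the unweighted estimate to $(\bU_b,\Pres_b)$ and using the pointwise bound $|bt<t>^{b-2}\bU|\le b<t>^{b-1}|\bU|$, the extra forcing is dominated in $L_p(0,T;L_q)$ by the $(b-1)$-weighted norm of $\bU$, which is controlled by the inductive hypothesis; induction on the integer part of $b$ starting from $b=0$ then gives \eqref{esti:MR} for all $b\ge 0$, with constants depending on $b$ but independent of $T$. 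The main obstacle is stage (ii): establishing the $\cc{R}$-boundedness of the resolvent in a general uniformly $H^2_\infty$ domain demands a uniform partition of unity, model problems in $\wsp$, $\hsp$ and bent half-spaces, and delicate commutator control in the patching, which is the core content of \cite{Shiba17CIME} and the reason the present theorem is imported as an input rather than re-proved here.
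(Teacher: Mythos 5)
The paper does not prove Theorem~\ref{Thm:SMR}; it is imported verbatim from \cite[eq.~(3.600)]{Shiba17CIME}, so there is no ``paper's own proof'' to compare against. Judged as a sketch of the CIME argument, your plan captures the right engine in stage (ii) — $\cc{R}$-bounded resolvent operators plus the Weis multiplier theorem on the UMD space $L_p(\BR;L_q(\Omega))$ — but stages (i) and (iii) contain genuine errors.

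Stage (i) is the critical flaw. If $\theta$ solves $(\nabla\theta,\nabla\varphi)_\Omega=(\bG_b,\nabla\varphi)_\Omega$ for all $\varphi\in\cc{W}^1_{q'}(\Omega)$, then $\bW=\bG_b-\nabla\theta$ satisfies $(\bW,\nabla\varphi)_\Omega=0$ for all test functions; this is exactly the Helmholtz projection onto $J_q(\Omega)$, so $\divv\bW=0$, not $G_b$. The subtraction goes the wrong way. If instead you tried $\bW=-\nabla\theta$, you would get the right divergence formally, but then $\Divv\bD(\bW)$ involves $\nabla^3\theta$, and Assumption~\ref{Ass:wDp} only delivers $\nabla\theta\in L_q(\Omega)$ from the weak problem — no $H^2_q$ or $H^3_q$ elliptic regularity is assumed or available in a general unbounded uniform $H^2_\infty$ domain. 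This is precisely why the present paper introduces the divergence-solution operator $K_0$ with the homogeneous estimates \eqref{esti:diveq:Lem} \emph{only in the half-space} (Lemma~\ref{Lem:diveq}), and why that operator is the enabling device behind the improvement to $N\geq3$ in Section~\ref{sec:half}. If a reduction to $(\bG,G,\bH)=(\fb0,0,\fb0)$ of the kind you describe were available in a general domain, the half-space detour would be unnecessary. In Shibata's CIME proof, $G$, $\bG$, and $\bH$ are fed directly into the $\cc{R}$-bounded solution operator family for the generalized resolvent problem, not removed in a preliminary reduction; the trace lifting idea you mention for $\bH_b$ is closer in spirit to what happens, but it is done at the level of the Fourier--Laplace symbol, not as a separate PDE step.

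Stage (iii) is incomplete: your induction on the integer part of $b$ establishes the weighted bound only for integer weights, whereas the theorem asserts it for all $b\geq0$. The shift $\lambda_0$ is there precisely so that the semigroup $e^{-t(\lambda_0+A_q)}=e^{-\lambda_0 t}e^{-tA_q}$ decays exponentially; writing $<t>^b\leq C<s>^b e^{\lambda_0(t-s)/2}$ for $0\leq s\leq t$, the polynomial weight is absorbed by half the exponential margin, and this yields the weighted maximal regularity for every $b\geq 0$ in a single step with a $T$-independent constant. That is the mechanism behind the cited estimate; the weight-transport scheme, besides missing non-integer $b$, also does not by itself explain why the constant is uniform in $T$.
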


\commm{Main theorem}


The following theorem on the global well-posedness of \eqref{LNS} in general domain 
is one of our main results. 
\begin{Thm} \label{Thm:GWPgen}
Let $2<p<\infty$, $1<q_0<N<q_2<\infty$ and $b>1/p'$. 
Assume that $\Omega$ is a uniformly $H^{2}_{\infty}(\Omega)$ domain 
and that the weak Dirichlet problem is uniquely solvable in $\cc{W}^1_q(\Omega)$ for $q\in(1,\infty)$ 
as stated in Assumptions 1 and 2. 
Also, assume that the $L_q$-$L_r$ estimates hold for a decay rate $\sigma_m(q,r)$ 
defined for $m=0,1,2$ and $(q,r)\in(1,\infty)$ with $q\geq r$ and satisfying the following conditions.   
\begin{itemize}
\item[\textup{(C1)}] 
$\sigma_m(q_0,r)$ and $\sigma_0(q_2,r)$ is non-negative and non-decreasing with respect to $m$ and $r$, 
\item[\textup{(C2)}] 
$\sigma_0(q_0,q_{04})>b+\frac{1}{p}$, $\sigma_1(q_0,q_{03})>1$ 
for some $q_{03},q_{04}\in[q_0,q_2]$ 
with $\frac{1}{q_0}=\frac{1}{q_{03}}+\frac{1}{q_{04}}$. 
\end{itemize}
Then there exists $\epsilon>0$ such that 
for any $\bv_0 \in \bigcap_{i=0,2} \cc{D}_{q_i,p}(\Omega) 
\subset \bigcap_{i=0,2} B^{2(1-1/p)}_{q_i,p}(\Omega)$ 
with smallness $\sum_{i=0,2}\|\bv_0\|_{B^{2(1-1/p)}_{q_i,p}(\Omega)}\leq\epsilon$, 
the transformed problem \eqref{LNS} admits unique solutions 
\begin{align} \label{eq:solclass}
\bu\in H^{1}_{p}(0,\infty;L_{q_2}(\Omega)^N)\cap L_{p}(0,\infty;H^{2}_{q_2}(\Omega)^N), \ 
\pres\in L_{p}(0,\infty;\cc{W}^{1}_{q_2}(\Omega)+H^{1}_{q_2}(\Omega))
\end{align}
possessing the estimate $[\bu]_{(0,\infty)}\leq C\epsilon$. 
Here, for an interval $(a,b)$, we let 
\begin{align} \label{def:Normofu}
\begin{aligned}
[\bu]_{(a,b)}
&=\sup_{(\tp_2,\tq_2)\in I_2} 
\|(1+t)^{b_2(\tp_2,\tq_2)}\del_t\bu \|_{L_{\tp_2}(a,b;L_{\tq_2}(\Omega))}
\\ &+\sum_{m=0,1,2}\sup_{(\tp_m,\tq_m)\in I_m} 
\|(1+t)^{\bmmpq}\nabla^m\bu \|_{L_{\tp_m}(a,b;L_{\tq_m}(\Omega))}, 
\end{aligned}
\end{align}
where the power $b_m(\tp_m,\tq_m)$ of the weight is defined as 
\begin{align} \label{def:bmpq}
\begin{aligned}
b_m(\tilde p_m,\tilde q_m)=
\left\{ \begin{array}{ll}
\min\{\sigma_m(q_0,\tilde q_m)-\frac{1}{\tilde p_m}-\delta,b\} & (\tp_m<\infty)
\\ \min\{\sigma_m(q_0,\tilde q_m),b\} & (\tp_m=\infty)
\end{array} \right.
\end{aligned}
\end{align}
with $\delta>0$ satisfying
\begin{align} \label{eq:deltaCondi}
\delta < \min\{\sigma_0(q_0,q_{04})-(b+1/p), \ \sigma_1(q_0,q_{03})-1, \ b-1/p' \}
\end{align}
and the index set $I_m$ is the set of all 
$(\tp_m,\tq_m)\in\{p,\infty\}\times[q_{0},\infty]$ 
satisfying
\begin{align} \label{eq:exp_restriction} 
\begin{aligned}
&(\tp_2,\tq_2)\in\{p\}\times[q_0,q_2], 
\\ &(\tp_1,\tp_1)\in(\{p\}\times[q_0,\infty])\cup(\{\infty\}\times[q_0,q_2]), 
\\ &(\tp_0,\tp_0)\in(\{p\}\times[q_0,\infty])\cup(\{\infty\}\times[q_0,\infty]). 
\end{aligned}
\end{align}
Moreover, the solution has the decay property 
\begin{align} \label{eq:decay} 
\|\nabla^m\bu(t)\|_{L_{r}(\Omega)} 
=O(t^{-\min\left\{\sigma_m(q_0,r),b\right\}})
\end{align}
for all 
$r\in[q_0,\infty]$ ($m=0$) and $r\in[q_0,q_2]$ ($m=1$). 
\end{Thm}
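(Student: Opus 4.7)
The plan is the standard prolongation scheme: produce a local solution in the class \eqref{eq:solclass} on some interval $(0,T_0)$ by a contraction argument based on Theorem \ref{Thm:SMR} applied with $b=0$, and then establish a global-in-time weighted \emph{a priori} estimate $[\bu]_{(0,T)}\leq C\epsilon$ valid uniformly in $T$. Iterating this lets me extend the local solution to all of $(0,\infty)$, and once $[\bu]_{(0,\infty)}\leq C\epsilon$ is in hand the decay \eqref{eq:decay} is simply the $\tilde p_m=\infty$ component of this norm.

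For the \emph{a priori} estimate I view \eqref{LNS} as the Stokes system \eqref{S} with data $(\bF,G,\bG,\bH)=(\bff(\bu),g(\bu),\bg(\bu),\bh(\bu))$ and split its solution as $\bU=\bU_1+\bU_2$: $\bU_1$ solves the time-shifted problem \eqref{SS} with the same data (after extending $<t>^b$ times the data to $\BR$ as required in \eqref{eq:DataClassMR}), while $\bU_2$ compensates for the shift through the Duhamel representation $\bU_2(t)=e^{-tA_{q_0}}\bv_0+\lambda_0\int_0^t e^{-(t-s)A_{q_0}}\bU_1(s)\,ds$. Theorem \ref{Thm:SMR} controls $\bU_1$ in the weighted maximal-regularity norm and hence handles the $(\tilde p_m,\tilde q_m)=(p,q_2)$ entries of $[\bu]_{(0,T)}$; the remaining entries with smaller $\tilde q_m$ or with $\tilde p_m=\infty$ are obtained by applying the $L_q$-$L_r$ estimates of Definition \ref{Def:LqLrEsti} to $\bU_2$. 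The choice \eqref{def:bmpq} of weight exponents and conditions (C1)--(C2) are precisely what make the resulting time integrals convergent and reproduce the power $b_m(\tilde p_m,\tilde q_m)$.

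On the nonlinear side, the structure \eqref{def:nonlin} together with $\bV^i(\fb O)=\fb O$ shows that each of $\bff(\bu),g(\bu),\bg(\bu),\bh(\bu)$ is an analytic (and small) matrix factor in $\int_0^t\nabla\bu\,ds$ times a quadratic product of derivatives of $\bu$. Each such product is estimated by H\"older's inequality in space and time with exponents drawn from the index sets $I_m$ so that the weights $(1+t)^{b_m}$ combine correctly, with $L_\infty$-in-space factors obtained via Sobolev embedding or Gagliardo--Nirenberg interpolation between the $q_0$ and $q_2$ entries of the norm. Combined with the linear bound above this yields $[\bu]_{(0,T)}\leq C\epsilon+C[\bu]_{(0,T)}^2$, which closes the bootstrap once $\epsilon$ is sufficiently small; applying the same estimates to the difference of two solutions gives uniqueness.

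The principal obstacle is the divergence datum $g(\bu)=\divv\bg(\bu)$: Theorem \ref{Thm:SMR} demands that its weighted extension lie in $H^{1/2}_p(\BR;L_{q_0}(\Omega))$, which via the standard $H^{1/2}$ product inequality reduces to controlling the weighted $\tdh$-norm of $\bu\otimes\nabla\bu$. This is exactly where the bound $\sigma_0(q_0,q_{04})>b+1/p$ from (C2), with $1/q_0=1/q_{03}+1/q_{04}$ and $q_{03},q_{04}\in[q_0,q_2]$, is consumed; combined with the half-space-type rate \eqref{eq:decayLpLq} $\sigma_0(q_0,r)=(N/2)(1/q_0-1/r)$ this is precisely the constraint that forces $N\geq 4$ in the general-domain statement. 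Verifying this tight interplay between the weight exponent $b$, the integrability indices, and the available decay rates is the technical heart of the proof.
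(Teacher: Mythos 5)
Your overall outline (local solvability plus a prolongation via a $T$-uniform a priori estimate, and a split of the Stokes solution into a time-shifted part controlled by Theorem~\ref{Thm:SMR} and a remainder controlled by the $L_q$-$L_r$ semigroup estimates) matches the paper's strategy. However, there is a genuine gap in your two-part decomposition $\bU=\bU_1+\bU_2$ with $\bU_2(t)=e^{-tA_{q_0}}\bv_0+\lambda_0\int_0^te^{-(t-s)A_{q_0}}\bU_1(s)\,ds$. Setting aside the double-counting of the initial data (both $\bU_1$ and $\bU_2$ carry $\bv_0$, so $\bU|_{t=0}=2\bv_0$), the decisive problem is the estimate of $\nabla^2\bU_2$ (and $\del_t\bU_2$): the pointwise bound $\|\nabla^2e^{-(t-s)A_q}\|_{\cc{L}(L_q)}\lesssim(t-s)^{-1}$ produces a non-integrable singularity at $s=t$, and you cannot remove it by commuting $A_q$ past $e^{-(t-s)A_q}$ because $\bU_1(s)\notin D(A_q)$ in general --- $\bU_1$ solves a problem with the inhomogeneous boundary datum $\bS\bn=\bH$, so it does not satisfy the homogeneous boundary condition built into $D(A_q)$.

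The paper avoids this precisely by using a \emph{three}-part decomposition \eqref{eq:DecompU}, \eqref{S1}--\eqref{S3}: $\bU^1$ solves the time-shifted problem with the full data; $\bU^2$ solves the time-shifted problem with right-hand side $\lambda_1\bU^1$ and \emph{homogeneous} boundary conditions, so that $\bU^2(s)\in D(A_q)$; and only then is $\bU^3(t)=\int_0^te^{-(t-s)A_q}\lambda_1\bU^2(s)\,ds$ estimated via the semigroup. The membership $\bU^2\in D(A_q)$ is what licenses the commutation $A_qe^{-tA_q}\bU^2=e^{-tA_q}A_q\bU^2$ and Lemma~\ref{Lem:EquivDAqH2q}, which together kill the $s=t$ singularity in the near-diagonal piece $\bU^{33}$. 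This is exactly the point of Remark~\ref{Rem:DAq}, which also notes that the two-part split you propose is what Shibata used in the exterior-domain paper but fails in the present setting. A smaller point: you attribute the $N\geq4$ constraint to the $H^{1/2}_p$-control of $g(\bu)$, but per Remark~\ref{Rem:C2} the binding term is actually $\bg(\bu)=\bV^2(\int_0^t\nabla\bu\,ds)\bu$, which involves the undifferentiated $\bu$ and therefore decays the slowest; it is the $L_p(\BR;L_{q_0})$ and $H^1_p(\BR;L_{q_0})$ norms of $E_T[\langle t\rangle^b\bg(\bu)]$ that cost the condition (C2).
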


\comm{Rem of Main theorem}
\begin{Rem} \label{Rem:GWPgen}
In the exterior domain, Shibata \cite{Shiba17CIME} 
developed the global well-posedness for the dimension $N\geq3$ 
by the compactness of the boundary $\del\Omega$ of the domain. 
In fact, he changed the transformation from \eqref{eq:Ltrans} 
so that the supports of the nonlinear terms lay near $\del\Omega$.
Then, the supports are bounded thanks to the compactness of $\del\Omega$.
This improves the decay of the nonlinear terms from the $L_q$-$L_r$ estimates 
by lifting up the exponent $r$ of $L_r(\Omega)$. 
In this paper, we make full use of the decay arising from the derivative 
($m/2$ appearing in \eqref{eq:decayLpLq} or \eqref{eq:decayLpLqext}) instead 
and obtain the global well-posedness. 
\end{Rem}

\comm{Thm: GWP half-space}
The condition \textup{(C2)} in Theorem \ref{Thm:GWPgen} requires us to take $N\geq 4$ 
even if the decay rate $\sigma_m(\tp,\tq)$ is as fast as that in the half-space, or more specifically satisfies \eqref{eq:decayLpLq}.  
This condition is required to estimate 
the nonlinear term $\bG(\bu)$ arising from $\divv\bv=0$, see Remark \ref{Rem:C2}. 
However, by reducing the Stokes equation \eqref{S} to the problem with $(\bG,G)=(\fb0,0)$ 
(see Subsec. \ref{subsec:linesti:half}), 
we establish the results also for $N=3$ in the half-space. 

\begin{Thm} \label{Thm:GWPha}
Let $2<p<\infty$, $1<q_0<N<q_2<\infty$, $b>1/p'$. 
Assume  
\begin{align}
b+\frac{1}{p}<\frac{N}{2q_0}.  
\end{align}
Then, the global well-posedness and decay property stated in Theorem \ref{Thm:GWPgen}
hold with $\Omega=\BR^N_+$ 
and with $\delta>0$ in the definition \eqref{def:bmpq} of $\bmmpq$ being 
\begin{align} \label{def:deltahalf}
\delta<\frac12\left(\frac{N}{2q_0}-(b+\frac1p)\right). 
\end{align}
\end{Thm}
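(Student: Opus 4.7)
The plan is to mirror the proof of Theorem \ref{Thm:GWPgen} almost verbatim, but exploit the half-space structure to push the dimensional restriction from $N\geq 4$ down to $N\geq 3$. The hypothesis $N\geq 4$ entered Theorem \ref{Thm:GWPgen} only through condition (C2), which was imposed in order to control the divergence-type nonlinearity $\bG(\bu)$ arising from $\divv\bv=0$ through the maximal regularity estimate \eqref{esti:MR}. For $\Omega=\hsp$ the decay rate is $\sigma_m(q,r)=\frac{N}{2}(\frac{1}{q}-\frac{1}{r})+\frac{m}{2}$, and (C2) reads
\begin{align}
\frac{N}{2q_{03}}>b+\frac{1}{p},\qquad \frac{N}{2q_{04}}>\frac{1}{2},\qquad \frac{1}{q_{03}}+\frac{1}{q_{04}}=\frac{1}{q_0},
\end{align}
which forces $N\geq 4$ regardless of how $q_0$ is chosen. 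The single assumption $b+\frac{1}{p}<\frac{N}{2q_0}$ in Theorem \ref{Thm:GWPha} is strictly weaker and is compatible with $N=3$ upon taking $q_0$ close enough to $1$.

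\textbf{Reduction step.} The decisive idea, flagged in the introduction and to be carried out in Subsection \ref{subsec:linesti:half}, is to reduce the Stokes problem \eqref{S} in $\hsp$ to the homogeneous divergence case $(\bG,G)=(\fb 0,0)$. Given $(\bG,G)$ with $G=\divv\bG$, one constructs an auxiliary field $\bw$ solving $\divv\bw=G$ via the divergence-equation solver of Lemma \ref{Lem:diveq}, which yields a better estimate than in a general domain precisely because of the half-space geometry. The shifted unknown $\bU-\bw$ then satisfies a Stokes problem with $(\bG,G)=(\fb 0,0)$, and the $\bG$-dependent terms in \eqref{esti:MR} are replaced by an estimate on $\bw$ alone. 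The crucial point is that this single-factor estimate does not require the two-factor $L_q$-$L_r$ balance that produced (C2).

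\textbf{Execution.} With the reduction in hand, I would rerun the prolongation-by-a-priori-estimate argument of Section \ref{sec:gen} with three substitutions: (i) the maximal regularity bound \eqref{esti:MR} is replaced by its reduced, $(\bG,G)=(\fb 0,0)$-variant from Subsection \ref{subsec:linesti:half}; (ii) the $L_q$-$L_r$ semigroup bounds are those verified for $\hsp$ in Subsection \ref{subsec:ass:half}, giving the explicit rate \eqref{eq:decayLpLq}; and (iii) the nonlinear estimates are taken from Subsection \ref{subsec:nonlinesti:half}, in which the troublesome $\bG(\bu)$ contribution no longer appears. Condition \eqref{def:deltahalf} on $\delta$ is the natural analogue of \eqref{eq:deltaCondi}: after the reduction, the two obstructions $\sigma_0(q_0,q_{04})-(b+1/p)$ and $\sigma_1(q_0,q_{03})-1$ coming from $\bG(\bu)$ disappear, and only the single bound $\frac{N}{2q_0}-(b+1/p)$ remains, with the factor $\frac{1}{2}$ accounting for a splitting in the estimate of the residual pure-$\bu$ terms.

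\textbf{Main obstacle.} The principal difficulty is the reduction itself. One must produce $\bw$ solving $\divv\bw=G$ with (a) the full time-regularity required by Theorem \ref{Thm:SMR}, namely $\bw\in H^1_p(\BR;L_q(\hsp)^N)\cap L_p(\BR;H^2_q(\hsp)^N)$ together with the compatibility $\bw|_{t=0}=\fb 0$; (b) a weighted bound at the $<t>^b$-level that matches the norm on the left-hand side of \eqref{esti:MR}; and (c) $L_q$-$L_r$ decay strong enough not to destroy the global estimate $[\bu]_{(0,\infty)}\leq C\epsilon$ that closes the contraction argument. Lemma \ref{Lem:diveq} is designed precisely to deliver these three properties in $\hsp$; once it is in place, the remainder is a careful but routine adaptation of Section \ref{sec:gen}, and the stated restriction \eqref{def:deltahalf} together with $b+1/p<N/(2q_0)$ is exactly what one needs to close the nonlinear estimates for $N\geq 3$.
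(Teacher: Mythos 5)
Your overall plan — reduce to $(\bG,G)=(\fb 0,0)$ via the half-space divergence solver $K_0$ of Lemma \ref{Lem:diveq}, and then rerun the machinery of Section \ref{sec:gen} with the half-space $L_q$-$L_r$ rates from Proposition \ref{Prop:LqLresti:half} — is exactly what the paper does, and the identification that (C2) is the sole source of the $N\geq 4$ restriction is correct.

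However, your item (b) is a misconception that, taken at face value, would stall the argument. You claim the auxiliary field $\bw=K_0G$ must be bounded in ``a weighted bound at the $<t>^b$-level that matches the norm on the left-hand side of \eqref{esti:MR}.'' But by Lemma \ref{Lem:diveq}, $\|<t>^bK_0g(\bu)\|_{L_p(0,T;L_{q_0})}\leq C\|<t>^b\bg(\bu)\|_{L_p(0,T;L_{q_0})}$, and the right-hand side is precisely the term that forced $N\geq 4$ via (C2); attempting a $<t>^b$-level bound on $\bw$ just reproduces the obstruction. The paper's resolution is subtly different: after the reduction, the a priori estimate \eqref{esti:linn:half} adds $[K_0g(\bu)]_{(0,T)}$ — i.e.\ the \emph{solution} norm from \eqref{def:Normofu} with exponent $b_m(\tilde p_m,\tilde q_m)$, not the $<t>^b$-weighted data norm — and this is genuinely weaker. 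In the half-space, for the $m=0$ component one has $b_0(\tilde p_0,\tilde q_0)\geq b-\tfrac12$ when $\tilde q_0\geq q_{04}$ (Lemma \ref{Lem:nonlinesti0:half}(b)), and the factor $\|<t>^{b_0}\bu\|_{L_{\tilde p_0}(L_{\tilde q_0})}$ is bounded by $[\bu]_{(0,T)}$ \emph{by definition}, so no extra decay need be proved (see \eqref{esti:K0gu}–\eqref{esti:diany}). It is exactly this $\tfrac12$-gain in the weight that permits replacing the (C2) pair $\bigl(\sigma_0(q_0,q_{04})>b+\tfrac1p,\ \sigma_1(q_0,q_{03})>1\bigr)$ by the pair $\bigl(\sigma_1(q_0,q_{04})>b+\tfrac1p,\ \sigma_1(q_0,q_{03})>1\bigr)$ in \eqref{def:q03q04half}; adding these and recalling $\sigma_1=\sigma_0+\tfrac12$ gives $\tfrac{N}{2q_0}>b+\tfrac1p$ instead of $\tfrac{N}{2q_0}>b+\tfrac1p+\tfrac12$, which is what lowers the threshold from $N\geq 4$ to $N\geq 3$, and $\delta_0=\tfrac12\bigl(\tfrac{N}{2q_0}-(b+\tfrac1p)\bigr)$ is the slack split evenly between the two conditions. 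Your ``factor $\tfrac12$ from splitting residual pure-$\bu$ terms'' gestures at this but does not pin down the mechanism, and your ``single-factor estimate'' claim is also inaccurate — the estimate of $K_0g(\bu)$ is still a two-factor Hölder split, with $\int_0^t\nabla\bu$ in $L_\infty(L_\infty)$ and $\bu$ in the (weaker) $<t>^{b_0}$-weighted norm.
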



\section{Proof of Theorem \ref{Thm:GWPgen}} \label{sec:gen}

In this section, we develop the global well-posedness and decay properties of the solution
of the transformed problem \eqref{LNS} in general domains 
stated in Theorem \ref{Thm:GWPgen}. 

The strategy to prove the global well-posedness 
is to prolong the local solution by proving an a priopri estimate. 
The unique existence of the local solution, which is stated as follows, 
is guaranteed by a similar argument to that in \cite[Theorem 2.4]{Shiba15DE}. 
 
\begin{Thm}[{\cite{Shiba15DE}}]] \label{Thm:LWP}
Let $2<p<\infty$, $N<q<\infty$ and $T>0$. Assume Assumptions 1 and 2 hold. 
Then, there exists an $\epsilon>0$ depending on $T$ 
such that, for any $\bv_0\in\cc{D}_{q,p}(\Omega)\subset B^{2(1-1/p)}_{q,p}(\Omega)$ 
with smallness condition $\|\bv_0\|_{B^{2(1-1/p)}_{q,p}(\Omega)}\leq \epsilon$, 
the quasilinear problem \eqref{LNS} admits a unique solution 
\begin{align}
\bu\in H^{1}_{p}(0,T;L_{q}(\Omega))\cap L_{p}(0,T;H^{2}_{q}(\Omega)), \ 
\pres\in L_{p}(0,T;\sppres)
\end{align}
possessing the estimate
\begin{align} \label{esti:LWP}
\|\del_t\bu\|_{L_{p}(0,T;L_{q}(\Omega))}+\|\bu\|_{L_{p}(0,T;H^2_{q}(\Omega))}
+\|\nabla\pres\|_{L_{p}(0,T;L_{q}(\Omega))}
\leq C\epsilon, 
\end{align} 
with some positive constant $C>0$ independent of $T$ and $\epsilon$. 
\end{Thm}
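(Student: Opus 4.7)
The plan is to construct the local solution by a Banach fixed-point argument on a ball in the maximal regularity space, using Theorem \ref{Thm:SMR} applied to the time-shifted Stokes system as the linear scheme. Fix $\lambda_0>0$ and, for a constant $L>0$ to be chosen, introduce
\[
\mathcal{X}_{T,L} = \{\bu\in H^1_p(0,T;L_q(\Omega)^N)\cap L_p(0,T;H^2_q(\Omega)^N) : \bu|_{t=0}=\bv_0, \ \mathcal{N}_T(\bu)\leq L\epsilon\},
\]
where $\mathcal{N}_T(\bu)$ denotes the left-hand side of \eqref{esti:LWP}. Given $\bu\in\mathcal{X}_{T,L}$, I would define $\Phi(\bu)=\bU$ by solving the time-shifted Stokes system \eqref{SS} with right members $\bF=\bff(\bu)+\lambda_0\bu$, $G=g(\bu)$, $\bG=\bg(\bu)$, $\bH=\bh(\bu)\nml$ and initial datum $\bv_0$. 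Theorem \ref{Thm:SMR} (used over a finite interval with $b=0$) then yields $\bU$ in the required regularity class provided the nonlinear data satisfy \eqref{eq:DataClassMR}.

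The core estimate is that $\mathcal{N}_T(\Phi(\bu))\leq C(\epsilon + \Psi(T,L\epsilon))$ with $\Psi(T,\delta)=o(\delta)$ as $\delta\to 0$. Since $q>N$, Sobolev embedding gives $H^2_q(\Omega)\hookrightarrow C^1(\overline\Omega)$ and $H^1_q(\Omega)\hookrightarrow L_\infty(\Omega)$, so the matrix polynomials $\bV^i(\int_0^t\nabla\bu\,ds)$ in \eqref{def:nonlin} are controlled in $L_\infty$ once $\sup_{t\leq T}\|\int_0^t\nabla\bu\,ds\|_{L_\infty}\leq C T^{1/p'}\mathcal{N}_T(\bu)$ is made small by choosing $\epsilon$ small (depending on $T$). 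Combining this with the identity $\bV^i(\fb O)=\fb O$, each nonlinear quantity picks up at least one small factor, yielding bounds like
\[
\|\bff(\bu)\|_{L_p(0,T;L_q)} + \|(g(\bu),\bh(\bu))\|_{L_p(0,T;H^1_q)} + \|\bg(\bu)\|_{H^1_p(0,T;L_q)} \leq C(T)(L\epsilon)^2.
\]
The delicate part is supplying admissible extensions $\bG_b,G_b,\bH_b$ to $t\in\BR$ whose $H^{1/2}_p(\BR;L_q)$ norms are controlled; the standard device is to extend by zero to $t<0$ (legitimate because $\bV^i(\fb O)=\fb O$ forces the nonlinearities to vanish at $t=0$) and by a reflection-plus-cutoff to $t>T$, then use the embedding $H^1_p\cap L_p(H^1_q)\hookrightarrow H^{1/2}_p(L_q)$ together with the interpolation characterization of $H^{1/2}_p$ to convert the maximal-regularity bounds on $g(\bu),\bh(\bu)$ into the required $\tdh$ bounds. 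The compatibility condition \eqref{eq:DataCCMR} holds by construction because $g(\bu)=\divv\bg(\bu)$ pointwise and both sides vanish on $\del\Omega$ in the distributional sense appropriate to $\cc{W}^1_{q'}(\Omega)$.

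Once the self-map property $\Phi(\mathcal{X}_{T,L})\subset\mathcal{X}_{T,L}$ is established for $L$ fixed large enough and $\epsilon$ small enough (both depending on $T$), contractivity follows by running the exact same maximal-regularity estimate on the difference $\Phi(\bu_1)-\Phi(\bu_2)$: the differences $\bff(\bu_1)-\bff(\bu_2)$ etc.\ are Lipschitz in $\bu$ with Lipschitz constant $O(L\epsilon)$ because the polynomials $\bV^i,\bW$ are smooth and evaluated near $\fb O$, and because the coefficients involve time-integrals that contribute a small factor $T^{1/p'}$. The Banach fixed-point theorem then produces the unique $\bu=\Phi(\bu)$, which together with the associated pressure $\pres$ solves \eqref{LNS} and satisfies \eqref{esti:LWP}.

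The main obstacle I anticipate is the bookkeeping for the divergence data $(g(\bu),\bg(\bu))$: one must simultaneously (i) realize $g(\bu)$ as $\divv \bg(\bu)$ with both quantities enjoying the precise functional regularity demanded by Theorem \ref{Thm:SMR}, (ii) extend them to $\BR$ preserving \eqref{eq:DataCCMR}, and (iii) bound the time half-derivative $\tdh$ of the extension by the maximal-regularity norm of $\bu$. Everything else (Sobolev embeddings, polynomial Lipschitz estimates, contraction) is then routine.
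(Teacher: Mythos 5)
The paper itself gives no proof of this theorem; it attributes the result to \cite[Theorem 2.4]{Shiba15DE} and states that it follows by a similar argument. Your overall strategy --- contraction in the maximal regularity space, using $q>N$ Sobolev embeddings to put the polynomial coefficients $\bV^i(\int_0^t\nabla\bu\,ds)$ into $L_\infty$, and a zero-extension to $t<0$ together with a reflection to $t>T$ to obtain admissible right-hand data in $H^{1/2}_p(\BR;L_q)\cap L_p(\BR;H^1_q)$ --- is the standard one and is in line with what the cited reference does.

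However, one step does not close as you have written it. You feed $\bF=\bff(\bu)+\lambda_0\bu$ into the time-shifted maximal regularity (Theorem \ref{Thm:SMR}); the resulting bound then carries a term $C_0\lambda_0\|\bu\|_{L_p(0,T;L_q(\Omega))}$ on the right of the self-map estimate, and this is \emph{linear} in $L\epsilon$, not superlinear, so it cannot be part of your $\Psi(T,L\epsilon)=o(L\epsilon)$. With $\lambda_0$ necessarily large for the shifted problem one has $C_0\lambda_0>1$, and since the left-hand side controls $\Phi(\bu)$ rather than $\bu$, no absorption is possible. The sharper bound $\|\bu\|_{L_p(0,T;L_q)}\leq T^{1/p}\|\bv_0\|_{L_q}+T\|\del_t\bu\|_{L_p(0,T;L_q)}$ only rescues the self-map condition when $C_0\lambda_0T<1$, i.e.\ for $T$ small, whereas the statement allows arbitrary $T>0$. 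The fix is to run the fixed point on the \emph{unshifted} Stokes problem \eqref{S}, using the local maximal regularity Theorem \ref{Thm:MRL} with constant $Ce^{\gamma_0T}$ as the linear step: there is then no spurious forcing $\lambda_0\bu$ to absorb, and the $T$-dependence of the linear constant is harmless because $\epsilon$ in the statement is already permitted to depend on $T$. (Alternatively, you could prove your small-$T$ version and iterate up to $T$, but that is a genuinely more involved argument than what you sketch.) The remaining ingredients you identify --- compatibility of $(g(\bu),\bg(\bu))$, the $H^1_p\cap L_p(H^1_q)\hookrightarrow H^{1/2}_p(L_q)$ embedding for the extended data, and the Lipschitz estimates giving contractivity --- are the right ones.
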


Then, by the same argument as in, e.g., \cite[Subsec. 3.8.6]{Shiba17CIME}, 
it suffices to prove that the a priori estimate 
\begin{align} \label{esti:apiori}
[\bu]_{(0,T)} \leq C(\cI+[\bu]_{(0,T)}^2) 
\end{align}
holds for any fixed $T>0$
when the transformed problem \eqref{LNS} admits a unique solution $\bu$ on $(0,T)$ 
sufficiently small in the norm $[\cdot]_{(0,T)}$. 
Here, we have defined 
\begin{align} \label{def:ininorm}
\cc{I}=\sum_{i=0,2}\|\bv_0\|_{B^{2(1-1/p)}_{q_i,p}(\Omega)} 
\end{align}
and $C>0$ is a constant independent of $\bu$ and $T$. 

To prove the a priori estimate \eqref{esti:apiori}, we show 
\begin{align} \label{esti:linn}
[\bu]_{(0,T)} \leq C\cc{N}(\bff(\bu),\bg(\bu),g(\bu),\bh(\bu)\bn,\bv_0) 
\end{align}
in Subsection \ref{subsec:linesti:gen} and 
\begin{align} \label{esti:nonlin}
\cc{N}(\bff(\bu),\bg(\bu),g(\bu),\bh(\bu)\nml,\bv_0) \leq C(\cI+[\bu]_{(0,T)}^2) 
\end{align}
in Subsection \ref{subsec:nonlinesti:gen}.
Then, we obtain the global well-posedness of the transformed problem \eqref{LNS} 
and the estimate
\begin{align} \label{esti:gen}
[\bu]_{(0,T)}\leq C\eps
\end{align}
of the solution $\bu$. 
Here, we have let 
\begin{align} \label{def:NormofData}
\begin{aligned}
&\cc{N}(\bF,\bG,G,\bH,\bv_0) 
\\ &\begin{aligned}
= \sum_{i=0,2}\big(&\|<t>^b\bF\|_{L_{p}(0,T;L_{q_i}(\Omega))}
+\|(E_T[<t>^b\bG],\del_tE_T[<t>^b\bG])\|_{L_{p}(\BR;L_{q_i}(\Omega))}
\\ & +\|\tdh E_T[<t>^b(G,\bH)]\|_{L_{p}(\BR;L_{q_i}(\Omega))}
\\ & +\|E_T[<t>^b(G,\bH)]\|_{L_{p}(\BR;H^1_{q_i}(\Omega))}
+\|\bv_0\|_{B^{2(1-1/p)}_{q_i,p}(\Omega)}\big), 
\end{aligned}
\end{aligned}
\end{align}
where the extention operator $E_T$ is defined by 
\begin{align} \label{def:ET}
E_Tf(t)=\left\{ \begin{array}{ll}
f(t)&0<t<T,\\
f(2T-t)&T\leq t<2T,\\
0&\text{otherwise} 
\end{array} \right.
\end{align} 
for a function $f$ defined on $[0,T)$ with $f|_{t=0}=0$. 
Note that  
\begin{align} \label{eq:EstiET}
\quad
\|\del_t^\ell\nabla^m E_T[f]\|_{L_{p}(\BR;L_{q}(\Omega))}
\leq C\|\del_t^\ell\nabla^m f\|_{L_{p}(0,T;L_{q}(\Omega))} 
\quad (\ell=0,1,\,m\in\bb{N}\cup\{0\})
\quad 
\end{align}
for $p,q\in[1,\infty]$ by 
\begin{align} \label{eq:deltET}
\del_tE_T[f](t)=\left\{ \begin{array}{ll}
\del_tf(t)&0<t<T,\\
-\del_tf(2T-t)&T<t<2T,\\
0&\text{otherwise}. 
\end{array} \right.
\end{align}
The decay properties \eqref{eq:decay} are obtained by 
\begin{align} \label{eq:decaypf}
\|<t>^{\bmpqz{m}{\infty}{\tq_m}}\nabla^m\bu\|_{L_{\infty}(0,T;L_{\tq_m}(\Omega))}
\leq C[\bu]_{(0,T)} \leq C\eps 
\end{align}
if $m=0,1$ and $(\infty,\tq_m)$ satisfies \eqref{eq:exp_restriction}.

\subsection{Estimate for the Stokes problem in the general domain} \label{subsec:linesti:gen}

In this subsection, 
we prove the estimate \eqref{esti:linn}. 
Because $\bu$ can be regarded as the solution to the Stokes problem \eqref{S} with 
\begin{align}
(\bv_0,\bF,\bG,G,\bH)=(\bv_0,\bff(\bu),\bg(\bu),g(\bu),\bh(\bu)),
\end{align}
it suffices to prove the corresponding estimate \eqref{esti:MRS} 
in the following theorem. 
To do so, we combine the maximal regularity Theorem \ref{Thm:SMR} with 
\begin{align}
(\bG_b,G_b,\bH_b)=(E_T[<t>^b\bG],E_T[<t>^bG],E_T[<t>^b\bH])
\end{align}
and $L_q$-$L_r$ estimates \eqref{esti:LqLr} 
for the decay rate $\sigma_m(\tp,\tq)$ 
with the condition \textup{(C1)} in Theorem \ref{Thm:GWPgen}. 
\begin{Thm} \label{Thm:MR}
Let $1<p,q<\infty$ and $T\in(0,\infty]$. 
Assume that Assumptions 1 and 2 hold and that the $L_q$-$L_r$ estimates holds for the decay rate $\sigma_m(q,r)$ 
defined for $m=0,1,2$ and for $(q,r)\in(1,\infty)$ with $q\geq r$ 
and satisfying the condition \textup{(C1)} in Theorem \ref{Thm:GWPgen}. 
For any $\bv_0\in\cc{D}_{q,p}(\Omega)$ 
and right members $(\bF,\bG,G,\bH)$ defined on $(0,T)$ satisfying 
\begin{align} \label{eq:DataClassMR:gen}
\begin{aligned}
&<t>^b\bF \in L_{p}(0,T;L_{q}(\Omega)^N), \quad  
E_T[<t>^b\bG] \in H^1_{p}(\BR;L_{q}(\Omega)^N), 
\\ &E_T[<t>^bG] \in H^{1/2}_{p}(\BR;L_{q}(\Omega))\cap L_{p}(\BR;H^{1}_{q}(\Omega)), 
\\ &E_T[<t>^b\bH] \in H^{1/2}_{p}(\BR;L_{q}(\Omega)^N)\cap L_{p}(\BR;H^{1}_{q}(\Omega)^N) 
\end{aligned}
\end{align}
and the compatibility condition
\begin{align} \label{eq:DataCCMRS}
(G(t),\varphi)_\Omega = (\bG(t),\nabla\varphi)_\Omega 
\text{ for any } \varphi\in\sppresz[q'], 
\end{align}
the Stokes problem \eqref{S} admits unique solutions 
\begin{gather}
\bU \in H^{1}_{p}(0,T;L_{q}(\Omega)^N) \cap L_{p}(0,T;H^{2}_{q}(\Omega)^N), \ 
\Pres \in L_{p}(0,T;\sppres). 
\end{gather}
Moreover, the solutions possess the estimate
\begin{align} \label{esti:MRS}
[\bU]_{(0,T)} \leq C_b\cc{N}(\bF,\bG,G,\bH,\bv_0) 
\end{align}
for $b\geq0$, where $C_b>0$ is a constant independent of $T$. 
\end{Thm}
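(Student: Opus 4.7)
The strategy is to decompose $\bU = \bU_1 + \bU_2$, where $\bU_1$ is obtained by applying Theorem \ref{Thm:SMR} (separately at $q=q_0$ and at $q=q_2$) to the time-shifted Stokes problem \eqref{SS} with the same data $(\bF,\bG,G,\bH,\bv_0)$ and with the extensions $\bG_b = E_T[<t>^b\bG]$, $G_b = E_T[<t>^bG]$, $\bH_b = E_T[<t>^b\bH]$. These extensions satisfy the data class \eqref{eq:DataClassMR} by \eqref{eq:EstiET} together with \eqref{eq:DataClassMR:gen}, while \eqref{eq:DataCCMRS} yields the compatibility \eqref{eq:DataCCMR}. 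Theorem \ref{Thm:SMR} then produces a unique $(\bU_1,\Pres_1)$ together with the weighted estimate \eqref{esti:MR}, which by direct inspection controls the contributions to $[\bU_1]_{(0,T)}$ indexed by $(\tp_m,\tq_m)\in\{p\}\times\{q_0,q_2\}$ in terms of $\cc{N}(\bF,\bG,G,\bH,\bv_0)$. The intermediate Lebesgue exponents and the $\tp_m=\infty$ cases in $I_m$ are to be recovered from $\bU_2$ via the $L_q$-$L_r$ semigroup estimates.

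Subtracting \eqref{SS} from \eqref{S} shows that $\bU_2 := \bU - \bU_1$ solves a \emph{homogeneous} Stokes initial value problem: $\del_t\bU_2 - \Divv\bS(\bU_2,\Pres - \Pres_1) = \lambda_0\bU_1$ in $\Omega\times(0,T)$ with $\divv\bU_2 = 0$, $\bS(\bU_2,\Pres - \Pres_1)\bn = 0$ on $\del\Omega\times(0,T)$, and $\bU_2|_{t=0} = 0$. Using the reduction \eqref{RSv0}--\eqref{Cv0} from Stokes problems with homogeneous divergence condition to Cauchy problems for $A_q$, together with the Helmholtz projection $\bb{P}$ onto $J_q(\Omega)$ made available by Assumption \ref{Ass:wDp}, Duhamel's formula yields
\[
\bU_2(t) = \lambda_0 \int_0^t e^{-(t-s)A_q}\,\bb{P}\bU_1(s)\,ds.
\]

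The main technical task is then to bound each weighted norm in $[\bU_2]_{(0,T)}$ — that is, $\|(1+t)^{b_m(\tp_m,\tq_m)}\nabla^m\bU_2\|_{L_{\tp_m}(0,T;L_{\tq_m}(\Omega))}$ and the analogue for $\del_t\bU_2$ — by $\cc{N}(\bF,\bG,G,\bH,\bv_0)$. The natural device is to split the convolution at $s=t/2$: on $(0,t/2)$ the factor $t-s$ is comparable to $t$, so the $L_{q_0}$-$L_{\tq_m}$ bound in \eqref{esti:LqLr} produces the decay $(t-s)^{-\sigma_m(q_0,\tq_m)}$ while $\bb{P}\bU_1(s)$ is controlled in the unweighted $L_{q_0}$ norm through Theorem \ref{Thm:SMR} at $q=q_0$; on $(t/2,t)$ one keeps the target exponent close to $q_0$ or $q_2$, exploits the weighted bound $\|<s>^b\bU_1(s)\|$ from \eqref{esti:MR}, and uses the short-time smoothing of the semigroup. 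For $t\lesssim 1$ the weights are harmless and Theorem \ref{Thm:SMR} applies directly. The monotonicity condition (C1) guarantees that every intermediate Lebesgue exponent and derivative order encountered along the way lies in the admissible range of \eqref{esti:LqLr}; the $\tp_m=\infty$ cases require no time integration and follow from the pointwise-in-$t$ semigroup bound, while $\del_t\bU_2$ and $\nabla^2\bU_2$ are handled through the first inequality of \eqref{esti:LqLr}.

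The hardest point will be to match the precise exponent $b_m(\tp_m,\tq_m)$ of \eqref{def:bmpq} with what emerges from these convolution estimates; in particular, the $-1/\tp_m - \delta$ loss at finite $\tp_m$ should arise from a H\"older inequality in $t$ against a power weight, with the small $\delta>0$ providing exactly the safety margin required for the resulting time integrals to converge. Once the weighted estimates for $\bU_1$ and $\bU_2$ have been collected, summing them yields \eqref{esti:MRS}, and uniqueness follows by applying the same decomposition to the difference of two hypothetical solutions and invoking the uniqueness statement of Theorem \ref{Thm:SMR}.
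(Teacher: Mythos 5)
Your two-part decomposition $\bU = \bU_1 + \bU_2$, with $\bU_1$ solving the time-shifted problem \eqref{SS} and $\bU_2$ solving the unshifted homogeneous Stokes problem with forcing $\lambda_0\bU_1$, is structurally the decomposition the paper explicitly rejects — see Remark \ref{Rem:DAq}, where it is explained that the ``two parts as in Shibata [p.~448]'' approach is abandoned here. The obstruction is concrete: your Duhamel integrand is $e^{-(t-s)A_q}\bb{P}\bU_1(s)$ with $\bb{P}\bU_1(s)\in J_q(\Omega)$ but in general \emph{not} in $D(A_q)$, since $\bU_1$ satisfies the inhomogeneous boundary condition $\bS(\bU_1,\Pres_1)\nml=\bH$ and has nonzero divergence, and the Helmholtz projection does not repair the boundary condition needed for $D(A_q)$. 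Consequently, near $s=t$ the only available bound on the top-order term is
\begin{align}
\bigl\|\nabla^2 e^{-(t-s)A_q}\bb{P}\bU_1(s)\bigr\|_{L_q(\Omega)}
\leq C\,(t-s)^{-1}\|\bb{P}\bU_1(s)\|_{L_q(\Omega)},
\end{align}
and $(t-s)^{-1}$ is not integrable over any interval abutting $s=t$. Your sketch (``on $(t/2,t)$ \ldots\ uses the short-time smoothing of the semigroup'') is exactly where this breaks, and the same problem recurs for $\del_t\bU_2$ via $A_q e^{-(t-s)A_q}$.

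The paper's fix is a \emph{three-part} decomposition $\bU=\bU^1+\bU^2+\bU^3$: $\bU^2$ solves the time-shifted problem \eqref{S2} with forcing $\lambda_1\bU^1$, zero divergence, zero boundary data, and zero initial data, so by maximal regularity $\bU^2(s)\in D(A_q)$ a.e.; then the Duhamel input to $\bU^3$ is $\lambda_1\bU^2\in D(A_q)$, and the commutation $A_q e^{-\tau A_q}\bU^2=e^{-\tau A_q}A_q\bU^2$ together with Lemma~\ref{Lem:EquivDAqH2q} lets one trade $\nabla^2$ for $A_q$ and absorb the would-be singularity. The paper also splits the Duhamel integral on $(2,T)$ into the three intervals $(0,t/2)\cup(t/2,t-1)\cup(t-1,t)$ — not just at $t/2$ — precisely because the interval $(t-1,t)$ requires the $D(A_q)$-device. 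Finally, for the short-time piece $(0,2)$ the paper invokes the local maximal regularity Theorem~\ref{Thm:MRL} for the \emph{unshifted} problem, not Theorem~\ref{Thm:SMR}, which concerns the shifted one. You need to introduce the intermediate step $\bU^2$ (or supply an alternative mechanism that upgrades the Duhamel input from $J_q(\Omega)$ to $D(A_q)$ before hitting it with $\nabla^2 e^{-(t-s)A_q}$); without it the convolution estimate for $\nabla^2\bU_2$ and $\del_t\bU_2$ does not close.
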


To prove Theorem \ref{Thm:MR}, 
it suffices to construct a solution to \eqref{S} with the estimate 
\begin{align} \label{esti:lin0}
\begin{aligned}
&\|<t>^{\bmmpq[2]}\del_t\bU\|_{L_{\tp_2}(0,T;L_{\tq_2}(\Omega))} 
\leq C_b\cc{N}(\bF,\bG,G,\bH,\bv_0),   \\ 
&\|<t>^{\bmmpq}\nabla^m\bU\|_{L_{\tp_m}(0,T;L_{\tq_m}(\Omega))} 
\leq C_b\cc{N}(\bF,\bG,G,\bH,\bv_0) 
\end{aligned}
\end{align}
for any $m=0,1,2$ and $(\tp_m,\tq_m)$ satisfying \eqref{eq:exp_restriction}
because the uniqueness is obtained by \cite[Theorem 3.2]{Shiba15DE}
and because \eqref{esti:MRS} can be obtained 
by \eqref{esti:lin0} and the definition \eqref{def:Normofu} of $[\bu]$. 
To this end, we consider the time-shifted Stokes system \eqref{SS} 
to deduce a sufficient decay of the solution 
and, then consider the system for the difference of the solutions to 
\eqref{SS} and the Stokes system \eqref{S}. 
We estimate the solution to the former system 
by the maximal $L_p$-$L_q$ regularity stated in Theorem \ref{Thm:SMR} 
and, to the latter, 
by the $L_q$-$L_r$ estimates \eqref{esti:LqLr} of the Stokes semigroup
for the decay rate $\sigma_m(q,r)$ with condition \textup{(C1)}.  

Divide the solutions $\bU$ and $\Pres$ of the Stokes equation \eqref{S} 
into three parts as 
\begin{align} \label{eq:DecompU}
\text{$\bU=\bU^1+\bU^2+\bU^3$ and $\Pres=\Pres^1+\Pres^2+\Pres^3$} 
\end{align}
so that each part satisfies the following equation for sufficiently large $\lambda_1$. 
\begin{align} \label{S1}
\qquad \left\{
\begin{array}{ll}
\del_t\bU^1 + \lambda_1\bU^1 - \Divv\bS(\bU^1,\Pres^1) = \bF, \quad
\divv \bU^1=G & \text{in } \Omega\times(0,\infty), \\
\bS(\bU^1,\Pres^1)\nml = \bH & \text{on } \del\Omega\times(0,\infty), \\
\bU^1|_{t=0}=\bv_0 & \text{in } \Omega, 
\end{array}
\right. \qquad \\ \label{S2}
\qquad \left\{
\begin{array}{ll}
\del_t\bU^2 + \lambda_1\bU^2 - \Divv\bS(\bU^2,\Pres^2) = \lambda_1\bU^1, \quad
\divv \bU^2=0 & \text{in } \Omega\times(0,\infty), \\
\bS(\bU^2,\Pres^2)\nml = 0 & \text{on } \del\Omega\times(0,\infty), \\
\bU^2|_{t=0}=0 & \text{in } \Omega, 
\end{array}
\right. \qquad \\ \label{S3}
\qquad \left\{
\begin{array}{ll}
\del_t\bU^3 - \Divv\bS(\bU^3,\Pres^3) = \lambda_1\bU^2, \quad
\divv \bU^3=0 & \text{in } \Omega\times(0,\infty), \\
\bS(\bU^3,\Pres^3)\nml = 0 & \text{on } \del\Omega\times(0,\infty), \\
\bU^3|_{t=0}=0 & \text{in } \Omega. 
\end{array}
\right. \qquad 
\end{align}

\begin{Rem} \label{Rem:DAq}
Note that the right-hand side of the first equation of \eqref{S3}, $\lambda_1\bU^2$, 
belongs to $D(A_q)$
while that of \eqref{S2}, $\lambda_1\bU^1$, in general does not. 
This is why we divide the solutions into three parts rather than 
two parts as in Shibata \cite[p. 448]{Shiba17CIME}, 
which is important in estimating 
\begin{align}
\nabla^2\bU^3(t)=\int_0^t \nabla^2e^{-(t-s)A_q}\lambda_1\bU^2(s) \, ds. 
\end{align}
In fact, the right-hand side will have a singularity on $s=t$
if we estimate it only by the pointwise estimate of the semigroup as  
\begin{align}
\left\| \int_0^t \nabla^2e^{-(t-s)A_q}\lambda_1\bU^2(s) \, ds \right\|_{L_{q}(\Omega)}
&\leq \int_0^t \|\nabla^2e^{-(t-s)A_q}\lambda_1\bU^2(s)\|_{L_{q}(\Omega)} \, ds
\\ &\leq C \int_0^t (t-s)^{-1} \|\bU^2(s)\|_{L_{q}(\Omega)} \, ds. 
\end{align}
We overcome this difficulty by the observations 
that $\nabla^2$ and $A_q$ are comparable 
and that we can exchange $A_q$ and $e^{-tA_q}$ thanks to $\bU^2\in D(A_q)$. 
\begin{align} \label{eq:swap}
A_qe^{-tA_q}\bU^2=e^{-tA_q}A_q\bU^2. 
\end{align}
\end{Rem}

\subti{Estimate for $\bU^3$}

First, we prove the estimate for the solution $\bU^3$ to \eqref{S3}  
for $m=0,1,2$ and $(\tp_m,\tq_m)$ with \eqref{eq:exp_restriction}. 
\begin{align} \label{esti:lin0U3}
\begin{aligned}
&\|<t>^{\bmmpq[2]}\del_t\bU^3\|_{L_{\tp_2}(0,\TTT;L_{\tq_2}(\Omega))} 
\leq C\sum_{i=0,2}\|<t>^b\bU^2\|_{L_{p}(0,T;H^2_{q_i}(\Omega))}, 
\\ &\|<t>^{\bmmpq}\nabla^m\bU^3\|_{L_{\tp_m}(0,\TTT;L_{\tq_m}(\Omega))} 
\leq C\sum_{i=0,2}\|<t>^b\bU^2\|_{L_{p}(0,T;H^2_{q_i}(\Omega))}. 
\end{aligned}
\end{align} 

Let us decompose the domains of the norms in the left-hand side 
as $(0,\TTT)=(0,2]\cup(2,T)$. Then, it suffices to show 
\begin{align} \label{esti:lin0U3L} 
\begin{aligned}
&\|<t>^{\bmmpq[2]}\del_t\bU^3\|_{L_{\tp_2}(0,2;L_{\tq_2}(\Omega))} 
\leq C\sum_{i=0,2}\|<t>^b\bU^2\|_{L_{p}(0,T;H^2_{q_i}(\Omega))}, 
\\ &\|<t>^{\bmmpq}\nabla^m\bU^3\|_{L_{\tp_m}(0,2;L_{\tq_m}(\Omega))} 
\leq C\sum_{i=0,2}\|<t>^b\bU^2\|_{L_{p}(0,T;H^2_{q_i}(\Omega))},  
\end{aligned} 
\\ \label{esti:lin0U3G} 
\begin{aligned}
&\|<t>^{\bmmpq[2]}\del_t\bU^3\|_{L_{\tp_2}(2,\TTT;L_{\tq_2}(\Omega))} 
\leq C\sum_{i=0,2}\|<t>^b\bU^2\|_{L_{p}(0,T;H^2_{q_i}(\Omega))}, 
\\ &\|<t>^{\bmmpq}\nabla^m\bU^3\|_{L_{\tp_m}(2,\TTT;L_{\tq_m}(\Omega))} 
\leq C\sum_{i=0,2}\|<t>^b\bU^2\|_{L_{p}(0,T;H^2_{q_i}(\Omega))}.  
\end{aligned}
\end{align} 

\subti{Estimate for $\bU^3$ on $(2,T)$}

We first prove the estimate \eqref{esti:lin0U3G} of $\bU^3$ on $(2,T)$. 
Initially, we prove the second inequality of \eqref{esti:lin0U3G} 
and we show the first inequality in \eqref{esti:lin0U3t} below. 
For this purpose, we decompose $\bU^3$ as 
\begin{align} \label{eq:DecompU3}
\begin{aligned}
\bU^3(t)
&=\int_0^t e^{-A_q(t-s)} \lambda_1\bU^2(s)\,ds\\
&=\left(\int_0^{t/2}+\int_{t/2}^{t-1}+\int_{t-1}^{t}\right) 
e^{-A_q(t-s)} \lambda_1\bU^2(s)\,ds 
\\ &=\bU^{31}(t)+\bU^{32}(t)+\bU^{33}(t) 
\end{aligned}
\end{align}
by setting 
\begin{align}
&\bU^{31}(t)=\int_0^{t/2} e^{-A_q(t-s)} \lambda_1\bU^2(s)\,ds, \quad  
\bU^{32}(t)=\int_{t/2}^{t-1} e^{-A_q(t-s)} \lambda_1\bU^2(s)\,ds, 
\\ &\bU^{33}(t)=\int_{t-1}^{t} e^{-A_q(t-s)} \lambda_1\bU^2(s)\,ds 
\end{align}
to use the relation 
\begin{align} \label{eq:stRelation}
\begin{array}{ll}
s<t-s\sim t & \text{when } 0<s<t/2, \ t>2, \\ 
t-s<s\sim t, \ t-s>1 & \text{when } t/2<s<t-1, \ t>2, \\ 
t-s<s\sim t, \ t-s<1 & \text{when } t-1<s<t, \ t>2. 
\end{array}
\end{align}
Then, we show the second inequality of \eqref{esti:lin0U3G} 
with $\bU^3$ replaced by $\bU^{3i}$ for each $i=1,2,3$. 

We first estimate $\bU^{33}$. 
To overcome the singlarity on $s=t$, (see Remark \ref{Rem:DAq}) 
we apply the following lemma and employ the formula $A_qe^{-tA_q}\bU^2=e^{-tA_q}A_q\bU^2$, 
which is obtained owing to $\bU^2\in D(A_q)$. 

\begin{Lem} \label{Lem:EquivDAqH2q}
Let $1<q<\infty$. 
The norms associated with $D(A_q)$ and $H^{2}_{q}(\Omega)$ are equivalent, 
that is, there exists $C>0$ satisfying 
\begin{align}
C^{-1}\|\bu\|_{H^{2}_{q}(\Omega)} \leq \|(\bu,A_q\bu)\|_{L_q(\Omega)} 
\leq C\|\bu\|_{H^2_q(\Omega)} \quad \text{for any }\bu\in D(A_q). 
\end{align}
\end{Lem}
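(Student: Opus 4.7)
The plan is to establish the two inequalities separately: the upper bound directly from the definition \eqref{def:Aq} of $A_q$ combined with the pressure formula \eqref{def:sopres}, and the lower bound from the Stokes resolvent estimate in $H^2_q(\Omega)$ that is already the essential ingredient behind the generation of the analytic semigroup asserted in the preceding proposition.

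For the upper bound $\|(\bu, A_q\bu)\|_{L_q(\Omega)} \leq C\|\bu\|_{H^2_q(\Omega)}$, the term $\|\bu\|_{L_q(\Omega)} \leq \|\bu\|_{H^2_q(\Omega)}$ is trivial. Since $\bu \in D(A_q) \subset J_q(\Omega)$ forces $\divv \bu = 0$ pointwise, I would expand $A_q\bu = -\Divv \bS(\bu, \Pi(\bu)) = -\mu\Delta\bu + \nabla\Pi(\bu)$ and bound the Laplacian piece directly by $\|\bu\|_{H^2_q(\Omega)}$. For the pressure gradient I would use the explicit representation $\Pi(\bu) = (2\mu\del_\nml \bu_\nml - \divv\bu) - \sopresz \nabla(2\mu\del_\nml \bu_\nml - \divv\bu)$ from \eqref{def:sopres}, combined with the $L_q$-bound \eqref{esti:wDp} for $\sopresz$ supplied by Assumption 2. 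The point is that, after choosing a smooth extension of $\nml$ into a neighborhood of $\del\Omega$ (which is permitted thanks to the uniform $H^2_\infty$ regularity in Assumption 1), the scalar $2\mu\del_\nml \bu_\nml - \divv\bu$ sits in $H^1_q(\Omega)$ with $H^1_q$-norm controlled by $\|\bu\|_{H^2_q(\Omega)}$, whence both summands in $\nabla\Pi(\bu)$ are bounded by the same quantity.

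For the lower bound $\|\bu\|_{H^2_q(\Omega)} \leq C\|(\bu, A_q\bu)\|_{L_q(\Omega)}$, I would fix $\lambda_0 > 0$ large enough to lie in the resolvent set of $-A_q$ and invoke the Stokes resolvent estimate $\|\bw\|_{H^2_q(\Omega)} \leq C\|(\lambda_0 + A_q)\bw\|_{L_q(\Omega)}$ for $\bw \in D(A_q)$ proved in \cite{Shiba14DIE} (this is exactly the estimate that generates the analytic semigroup in the proposition preceding this lemma). Applying it with $\bw = \bu$ gives $\|\bu\|_{H^2_q(\Omega)} \leq C(\lambda_0\|\bu\|_{L_q(\Omega)} + \|A_q\bu\|_{L_q(\Omega)}) \leq C'\|(\bu, A_q\bu)\|_{L_q(\Omega)}$, which is the desired inequality.

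The only genuinely delicate step is the pressure bound in the upper-bound direction, since it requires the extension of $\nml$ together with careful use of Assumption 2; the remainder of the argument reduces either to elementary embeddings or to a black-box resolvent estimate already in the existing Stokes theory for uniformly $H^2_\infty$ domains.
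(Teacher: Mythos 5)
Your proposal is correct and follows essentially the same two-step strategy as the paper: the upper bound via the explicit formula for $\sopres$ together with the estimate \eqref{esti:wDp} for $\sopresz$, and the lower bound via the $H^2_q$ resolvent estimate for the reduced Stokes problem at a large shift $\lambda_0$ (the paper cites this estimate as \cite[Theorem 1.5 (1)]{Shiba13MFM} rather than \cite{Shiba14DIE}, but that is only a bookkeeping difference in attribution).
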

\begin{proof}
By the definition \eqref{def:Aq} of $A_q$ and \eqref{def:sopres} of $\sopres$, 
and the estimate \eqref{esti:wDp} for $\sopresz$, 
we get $\|(\bu,A_q\bu)\|_{L_{q}(\Omega)}\leq C\|\bu\|_{H^{2}_{q}(\Omega)}$. 
To prove the other estimate, 
we set $\bff=(\lambda_0+A_q)\bu$ for sufficiently large $\lambda_0>0$. 
Then, similarly to the argument to derive the reduced Stokes equation \eqref{RSv0}, 
$\bu$ and $\pres=\sopres\bu$ satisfy
\begin{align} \label{SRlam0:Lem}
\left\{
\begin{array}{ll}
\lambda_0 \bu - \Divv \bS(\bu,\pres) = \bff, \quad \divv \bu = 0 & \text{in } \Omega \\
\bS(\bu,\pres)\nml = 0 & \text{in } \del\Omega. 
\end{array}
\right. 
\end{align}
Thus, by the uniqueness and the estimate 
\begin{align}
\|\bu\|_{H^{2}_{q}(\Omega)} \leq C\|\bff\|_{L_{q}(\Omega)}
\end{align}
for \eqref{SRlam0:Lem} due to \cite[Theorem 1.5 (1)]{Shiba13MFM} 
and by $\bff=(\lambda_0+A_q)\bu$, we obtain the desired estimate. 
\end{proof}

\subti{Estimate for $\bU^{33}$}

Now, we show the estimate 
for the third part $\bU^{33}$ of the solution formula \eqref{eq:DecompU3} of $\bU^3$:  
for $m=0,1,2$ and $(\tp_m,\tq_m)$ satisfying \eqref{eq:exp_restriction},  
\begin{align} \label{esti:lin0U33} 
\begin{aligned}
&\|<t>^{\bmmpq}\nabla^m\bU^{33}\|_{L_{\tp_m}(2,\TTT;L_{\tq_m}(\Omega))} 
\leq C\sum_{i=0,2}\|<t>^b\bU^2\|_{L_{p}(0,T;H^2_{q_i}(\Omega))}.  
\end{aligned}
\end{align} 
Because $t-s<s\sim t$ and $\bmmpq\leq b$, 
see the relation \eqref{eq:stRelation} and the definition \eqref{def:bmpq} of $\bmmpq$, 
we obtain 
\begin{align} \label{eq:estiU33a}
\begin{aligned}
&\|<t>^{\bmmpq}\nabla^m\bU^3\|_{L_{\tp_m}(2,\TTT;L_{\tq_m}(\Omega))}
\\ &=\left\| <t>^{\bmmpq} \nabla^m \int_{t-1}^t e^{-A_{{\tq_m}}(t-s)} \lambda_1\bU^2(s)\,ds 
\right\|_{{L_{\tp_m}(2,\TTT;L_{\tq_m}(\Omega))}}
\\ &\leq C\left\| \nabla^m \int_{t-1}^t e^{-A_{{\tq_m}}(t-s)} \lambda_1<s>^b\bU^2(s)\,ds 
\right\|_{{L_{\tp_m}(2,\TTT;L_{\tq_m}(\Omega))}}
\\ &\leq C\left\| \int_{t-1}^t 
\|\nabla^m e^{-A_{{\tq_m}}(t-s)} <s>^b\bU^2(s)\|_{L_{\tq_m}(\Omega)}
\,ds \right\|_{{L_{\tp_m}(2,\TTT)}}
\\ &\leq C\sum_{i=0,2}\left\| \int_{t-1}^t 
\|e^{-A_{q_i}(t-s)} <s>^b\bU^2(s)\|_{H^2_{q_i}(\Omega)}
\,ds \right\|_{{L_{\tp_m}(2,\TTT)}}, 
\end{aligned}
\end{align}
where we have used the Sobolev embedding and \eqref{eq:exp_restriction} 
in the last inequality. 
We apply Lemma \ref{Lem:EquivDAqH2q} and the formula \eqref{eq:swap} 
to estimate the right-hand side as follows.  
\begin{align}
\begin{aligned}
&\left\| \int_{t-1}^t  
\|e^{-A_{q_i}(t-s)}<s>^b\bU^2(s)\|_{H^2_{q_i}(\Omega)} \,ds \right\|_{{L_{\tp_m}(2,\TTT)}}
\\ \notag &\leq C\left\| \int_{t-1}^t  
\|(e^{-A_{q_i}(t-s)}<s>^b\bU^2(s),A_{q_i}e^{-A_{q_i}(t-s)}<s>^b\bU^2(s))\|_{L_{q_i}(\Omega)} 
\,ds \right\|_{{L_{\tp_m}(2,\TTT)}}
\\ \notag &=C\left\| \int_{t-1}^t  
\|(e^{-A_{q_i}(t-s)}<s>^b\bU^2(s),e^{-A_{q_i}(t-s)}<s>^bA_{q_i}\bU^2(s))\|_{L_{q_i}(\Omega)} 
\,ds \right\|_{{L_{\tp_m}(2,\TTT)}},
\end{aligned}
\end{align}
and this term is estimated as follows
from $\|e^{-A_qt}\|_{\cc{L}(L_{q_i}(\Omega))}\leq C$ ($0\leq t\leq 1$), 
Young's inequality, and Lemma \ref{Lem:EquivDAqH2q}. 
\begin{align} \label{eq:estiU33c}
\begin{aligned} 
&\left\| \int_{t-1}^t  
\|(e^{-A_{q_i}(t-s)}<s>^b\bU^2(s),e^{-A_{q_i}(t-s)}<s>^bA_{q_i}\bU^2(s))\|_{L_{q_i}(\Omega)} 
\,ds \right\|_{{L_{\tp_m}(2,\TTT)}}
\\ &\leq C\left\| \int_{t-1}^t  
\|(<s>^b\bU^2(s),<s>^bA_{q_i}\bU^2(s))\|_{L_{q_i}(\Omega)} \,ds \right\|_{{L_{\tp_m}(2,\TTT)}}
\\ &\leq C\left\| \int_{\BR} \fb1_{(0,1)}(t-s) 
\fb1_{(0,\TTT)}(s)\|<s>^b(\bU^2(s),A_{q_i}\bU^2(s))\|_{L_{q_i}(\Omega)}\,ds 
\right\|_{{L_{\tp_m}(2,\TTT)}}
\\ &\leq C\| \fb1_{(0,1)} \|_{L_{r}(\BR)} 
\|\fb1_{(0,\TTT)}(s)<s>^b(\bU^2(s),A_{q_i}\bU^2(s))\|_{{L_{p}(\BR;L_{q_i}(\Omega))}} 
\\ &= C\| 1 \|_{L_{r}(0,1)} 
\|<s>^b(\bU^2(s),A_{q_i}\bU^2(s))\|_{{L_{p}(0,T;L_{q_i}(\Omega))}}
\\ &\leq C\sum_{i=0,2}\|<t>^b\bU^2\|_{L_{p}(0,T;H^2_{q_i}(\Omega))}, 
\end{aligned}
\end{align}
where the exponent $r$ is defined by $1/r+1/p=1/\tp_m+1$ 
and $\fb1_A$ is the characteristic function on a set $A$. 
In this way, we obtain the estimate \eqref{esti:lin0U33}.

\subti{Estimate for $\bU^{31}$}

We next prove the estimate 
for the first part $\bU^{31}$ of the solution formula \eqref{eq:DecompU3} of $\bU^3$: 
for $m=0,1,2$ and $(\tp_m,\tq_m)$ with \eqref{eq:exp_restriction}, 
\begin{align} \label{esti:lin0U31}
\begin{aligned}
&\|<t>^{\bmmpq}\nabla^m\bU^{31}\|_{L_{\tp_m}(2,\TTT;L_{\tq_m}(\Omega))} 
\leq C\sum_{i=0,2}\|<t>^b\bU^2\|_{L_{p}(0,\TTT;H^2_{q_i}(\Omega))}. 
\end{aligned}
\end{align} 

By the assumption on the $L_q$-$L_r$ estimate for the decay rate $\sigma_m(q,r)$ 
with the conditions \textup{(C1)} in Theorem \ref{Thm:GWPgen}, 
as well as by $s\leq t-s\sim t$, see \eqref{eq:stRelation}, and H\"{o}lder's inequality,  
\begin{align} \label{eq:estiU31a}
\begin{aligned}
&\|\nabla^m\bU^{31}\|_{L_{\tq_m}(\Omega)} 
\\ &=\left\| \nabla^m \int_0^{t/2} e^{-A_q(t-s)} \lambda_1\bU^2(s)\,ds 
\right\|_{L_{\tq_m}(\Omega)} 
\\ &\leq \int_0^{t/2} \|\nabla^m e^{-A_q(t-s)} \lambda_1\bU^2(s) 
\|_{L_{\tq_m}(\Omega)} \,ds  
\\ &\leq C \int_0^{t/2} (t-s)^{-\sigma_m(q_0,\tq_m)} 
\|\bU^2(s)\|_{L_{q_0}(\Omega)} \,ds  
\\ &\leq C <t>^{-\sigma_m(q_0,\tq_m)} 
\int_0^{t/2} <s>^{-b} \|<s>^b\bU^2(s)\|_{L_{q_0}(\Omega)} \,ds 
\\ &\leq C <t>^{-\sigma_m(q_0,\tq_m)}
\|<s>^{-b}\|_{L_{p'}(0,\TTT)}\|<s>^b\bU^2(s)\|_{L_{p}(0,\TTT;L_{q_0}(\Omega))} 
\end{aligned}
\end{align}
for $t\in(2,T)$. 
By multiplying each term by $<t>^{\bmmpq}$ and taking $L_{\tp_m}(2,\TTT)$ norm, we obtain
\begin{align} \label{eq:estiU31b}
\begin{aligned}
&\|<t>^{\bmmpq}\nabla^m\bU^{31}\|_{L_{\tp_m}(2,\TTT;L_{\tq_m}(\Omega))} 
\\ &\leq C \|<t>^{\bmmpq-\sigma_m(q_0,\tq_m)}\|_{L_{\tp_m}(2,\TTT)}
\|<s>^{-b}\|_{L_{p'}(0,\TTT)}\|<s>^b\bU^2(s)\|_{L_{p}(0,T;L_{q_0}(\Omega))} 
\\ &\leq C\|<s>^b\bU^2(s)\|_{L_{p}(0,\TTT;L_{q_0}(\Omega))}
\end{aligned}
\end{align}
because 
\begin{align} 
&\|<t>^{\bmmpq-\sigma_m(q_0,\tq_m)}\|_{L_{\tp_m}(2,\TTT)}
\\ &= \left\{ \begin{aligned}
&\|<t>^{\min\{\sigma_m(q_0,\tq_m)-\frac{1}{p}-\delta,b\}-\sigma_m(q_0,\tq_m)}\|_{L_{p}(2,T)}
\leq \|<t>^{-\frac1p-\delta}\|_{L_{p}(0,\infty)} \leq C &&(\tp_m=p) 
\\ &\|<t>^{\min\{\sigma_m(q_0,\tq_m),b\}-\sigma_m(q_0,\tq_m)}\|_{L_{\infty}(2,\infty)}
\leq \|1\|_{L_{p}(0,T)}
=1 &&(\tp_m=\infty)
\end{aligned} \right.
\end{align}
by the definition \eqref{def:bmpq} 
and $\|<s>^{-b}\|_{L_{p'}(0,\TTT)}<\infty$ from $b>\frac{1}{p'}$. 
Therefore, we obtain \eqref{esti:lin0U31}.

\subti{Estimate for $\bU^{32}$}

Finally, we show the following estimate for the second part $\bU^{32}$ 
of the solution formula \eqref{eq:DecompU3} 
for $m=0,1,2$ and $(\tp_m,\tq_m)$ with \eqref{eq:exp_restriction}. 
\begin{align} \label{esti:lin0U32}
\begin{aligned}
&\|<t>^{\bmmpq}\nabla^m\bU^{32}\|_{L_{\tp_m}(2,\TTT;L_{\tq_m}(\Omega))} 
\leq C\sum_{i=0,2}\|<t>^b\bU^2\|_{L_{p}(0,T;H^2_{q_i}(\Omega))}. 
\end{aligned}
\end{align} 
This is obtained as follows. 
By the assumption on the $L_q$-$L_r$ estimate \eqref{esti:LqLr} 
for the decay rate $\sigma_m(q,r)$ with the condition \textup{(C1)} in Theorem \ref{Thm:GWPgen}, 
\begin{align} \label{eq:estiU32a}
\begin{aligned}
&\|<t>^{\bmmpq}\nabla^m\bU^{32}\|_{L_{\tp_m}(2,\TTT;L_{\tq_m}(\Omega))}
\\ &=\left\| <t>^{\bmmpq} \nabla^m \int_{t/2}^{t-1} e^{-A_q(t-s)} \lambda_1\bU^2(s)\,ds 
\right\|_{L_{\tp_m}(2,\TTT;L_{\tq_m}(\Omega))} 
\\ &\leq\left\| <t>^{\bmmpq} \int_{t/2}^{t-1} 
\| \nabla^m e^{-A_q(t-s)} \lambda_1\bU^2(s) \|_{L_{\tq_m}(\Omega)} 
\,ds \right\|_{L_{\tp_m}(2,\TTT)} 
\\ &\leq C \left\| <t>^{\bmmpq} \int_{t/2}^{t-1} (t-s)^{-\sigma_m(q_0,\tq_m)} 
\|\bU^2(s)\|_{L_{q_0}(\Omega)} \,ds \right\|_{L_{\tp_m}(2,\TTT)}. 
\end{aligned}
\end{align}
By $t-s\leq s\sim t$, (see \eqref{eq:stRelation}) we get
\begin{align} \label{eq:relationU32}
\begin{aligned}
<t>^{\bmmpq} &= <t>^{-(b-\bmmpq)}<t>^{b} \leq C(t-s)^{-(b-\bmmpq)}<s>^b. 
\end{aligned}
\end{align}
Defining $r$ by $1/r+1/p=1/\tp_m+1$ and denoting the characteristic function of a set $A$ by $\fb1_A$, 
by \eqref{eq:relationU32} and Young's inequality, we have  
\begin{align} \label{eq:estiU32b}
\begin{aligned}
&\left\| <t>^{\bmmpq} \int_{t/2}^{t-1} (t-s)^{-\sigma_m(q_0,\tq_m)} 
\|\bU^2(s)\|_{L_{q_0}(\Omega)} \,ds \right\|_{L_{\tp_m}(2,\TTT)} 
\\ &\leq C\left\| \int_{t/2}^{t-1} (t-s)^{-\sigma_m(q_0,\tq_m)+\bmmpq-b} \|<s>^b\bU^2(s)\|_{L_{q_0}(\Omega)} \,ds 
\right\|_{L_{\tp_m}(2,\TTT)} 
\\ &\leq C\left\| \int_{\BR} \fb1_{(1,\infty)}(t-s)(t-s)^{-\sigma_m(q_0,\tq_m)+\bmmpq-b} 
\fb1_{(0,T)}(s)\|<s>^b\bU^2(s)\|_{L_{q_0}(\Omega)} \,ds 
\right\|_{L_{\tp_m}(2,\TTT)}
\\ &\leq C \|s^{-b+\bmmpq-\sigma_m(q_0,\tq_m)}\|_{L_{r}(1,\infty)}
\|<s>^b\bU^2(s)\|_{L_{p}(0,\TTT;L_{q_0}(\Omega))}
\\ &\leq C\|<s>^b\bU^2(s)\|_{L_{p}(0,\TTT;L_{q_0}(\Omega))} 
\end{aligned}
\end{align}
because $\bmmpq\leq \sigma_m(q_0,\tq_m)-\frac{1}{\tp_m}-\delta$ and $b>\frac{1}{p'}$ 
yield 
\begin{align} \label{eq:exp_check_estiU32b}
-b+\bmmpq-\sigma_m(q_0,\tq_m)<-\left(1-\frac1p\right)-\frac{1}{\tp_m}=-\frac1r, 
\end{align}
which implies 
$\|s^{-b+\bmmpq-\sigma_m(q_0,\tq_m)}\|_{L_{r}(1,\infty)}<\infty$. 
Thus, the desired estimate \eqref{esti:lin0U32} is proven. 
Then, by \eqref{eq:DecompU3}, \eqref{esti:lin0U31}, \eqref{esti:lin0U32} and \eqref{esti:lin0U33}, 
we obtain the second inequality of \eqref{esti:lin0U3G} 
as the estimate for $\bU^3$ on $(2,\TTT)$.  

\subti{Estimate of $\del_t\bU^3$ on $(2,\TTT)$}

Then, we prove the first inequality of \eqref{esti:lin0U3G} 
as the estimate for $\del_t\bU^3$ on $(2,T)$ 
\begin{align} \label{esti:lin0U3t}
\|<t>^{\bmmpq[2]}\del_t\bU^3\|_{L_{\tp_2}(2,\TTT;L_{\tq_2}(\Omega))} 
\leq C\sum_{i=0,2}\|<t>^b\bU^2\|_{L_{p}(0,T;H^2_{q_i}(\Omega))}, 
\end{align}
for any $m=0,1,2$ and $(\tp_2,\tq_2)$ satisfying \eqref{eq:exp_restriction}. 
To do so, we use the fact $\bU^3$ satisfies the equation 
\begin{align}
\del_t\bU^3+A_q\bU^3=\lambda_1\bU^2, \quad \bU^3|_{t=0}=0, 
\end{align}
which is obtained by the same way we have reduced the Stokes problem \eqref{Sv0} to \eqref{Cv0}, 
and obtain 
\begin{align}
&\|<t>^{\bmmpq[2]}\del_t\bU^3\|_{L_{\tp_2}(2,\TTT;L_{\tq_2}(\Omega))} 
\\ &\leq \|<t>^{\bmmpq[2]}A_{\tq_2}\bU^3\|_{L_{\tp_2}(2,\TTT;L_{\tq_2}(\Omega))}
+ \|<t>^{\bmmpq[2]}\lambda_1\bU^2\|_{L_{\tp_2}(2,\TTT;L_{\tq_2}(\Omega))}
\\ &\leq \|<t>^{\bmmpq[2]}A_{\tq_2}\bU^3\|_{L_{\tp_2}(2,\TTT;L_{\tq_2}(\Omega))}
+ C\sum_{i=0,2}\|<t>^b\bU^2\|_{L_{p}(0,\infty;L_{q_i}(\Omega))} 
\end{align}
by \eqref{eq:exp_restriction}. 
Regarding the first term, by the solution formula \eqref{eq:DecompU3} of $\bU^3$, 
\begin{align} \label{eq:estiU3t}
\begin{aligned}
&\|<t>^{\bmmpq[2]}A_{\tq_2}\bU^3\|_{L_{\tp_2}(2,\TTT;L_{\tq_2}(\Omega))}
\\ &=\left\|<t>^{\bmmpq[2]} A_{\tq_2}\left(\int_0^{t/2}+\int_{t/2}^{t-1}+\int_{t-1}^{t}\right) 
e^{-A_q(t-s)} \lambda_1\bU^2(s)\,ds \right\|_{L_{\tp_2}(2,\TTT;L_{\tq_2}(\Omega))}
\\ &\leq \left\|<t>^{\bmmpq[2]} \int_0^{t/2} 
\| A_{\tq_2}e^{-A_{\tq_2}(t-s)} \lambda_1\bU^2(s)\|_{L_{\tq_2}(\Omega)}
\,ds \right\|_{L_{\tp_2}(2,\TTT)}
\\ &+ \left\|<t>^{\bmmpq[2]} \int_{t/2}^{t-1} 
\| A_{\tq_2}e^{-A_{\tq_2}(t-s)} \lambda_1\bU^2(s)\|_{L_{\tq_2}(\Omega)}
\,ds \right\|_{L_{\tp_2}(2,\TTT)}
\\ &+ \left\|<t>^{\bmmpq[2]} \int_{t-1}^{t} 
\| A_{\tq_2}e^{-A_{\tq_2}(t-s)} \lambda_1\bU^2(s)\|_{L_{\tq_2}(\Omega)}
\,ds \right\|_{L_{\tp_2}(2,\TTT)}. 
\end{aligned}
\end{align}
Because 
\begin{align}
\del_te^{-tA_q}\bff+A_qe^{-tA_q}\bff=0 \quad (\bff\in J_q(\Omega)), 
\end{align}
we write 
$A_{\tq_2}e^{-A_{\tq_2}(t-s)} \lambda_1\bU^2(s)=-\del_te^{-A_{\tq_2}(t-s)} \lambda_1\bU^2(s)$
and use the $L_q$-$L_r$ estimate \eqref{esti:LqLr}. 
Then, the first term and second term of the right-hand side of \eqref{eq:estiU3t} 
are estimated by 
\begin{align}
&C\left\|<t>^{\bmmpq[2]} \int_0^{t/2} 
(t-s)^{-\sigma_2(q_0,\tq_2)} \| \lambda_1\bU^2(s)\|_{L_{\tq_2}(\Omega)}
\,ds \right\|_{L_{\tp_2}(2,\TTT)}, 
\\ & C\left\|<t>^{\bmmpq[2]} \int_{t/2}^{t-1} 
(t-s)^{-\sigma_2(q_0,\tq_2)} \| \lambda_1\bU^2(s)\|_{L_{\tq_2}(\Omega)}
\,ds \right\|_{L_{\tp_2}(2,\TTT)}, 
\end{align}
respectively. 
We continue the estimate the first term of the right-hand side of \eqref{eq:estiU3t} 
in the same way as in \eqref{eq:estiU31a} and \eqref{eq:estiU31b}, 
the second term as in \eqref{eq:estiU32b},  
and the third term as in \eqref{eq:swap} and \eqref{eq:estiU33c}. 
Then, we obtain the desired estimate for $\del_t\bU^3$, 
and summarizing the arguments above yields the estimate \eqref{esti:lin0U3G} 
for $\bU^3$ on $(2,\TTT)$.

\subti{Estimate for $\bU^{3}$ on $(0,2)$}

We now show the estimate \eqref{esti:lin0U3L} on $(0,2)$ for $\bU^3$, 
which is defined as the solution of \eqref{S3}. 
We apply the maximal regularity locally in time, 
which is proven due to Shibata \cite[Theorem 3.2]{Shiba15DE}. 
We use this for the case $(\bG,G,\bH,\bv_0)=(\fb0,0,\fb0,\fb0)$: 
\begin{Thm}[\cite{Shiba15DE}] \label{Thm:MRL}
Let $1<p,q<\infty$ and $T>0$.  
Under Assumptions 1 and 2, for any 
\begin{align} \label{eq:DataClassMRL}
\begin{gathered}
\bF \in L_{p}(0,T;L_{q}(\Omega)^N)
\end{gathered}
\end{align}
the Stokes problem \eqref{S} with $(\bG,G,\bH,\bv_0)=(\fb0,0,\fb0,\fb0)$
admits unique solutions 
\begin{gather}
\bU \in H^{1}_{p}(0,T;L_{q}(\Omega)^N) \cap L_{p}(0,T;H^{2}_{q}(\Omega)^N), \ 
\Pres \in L_{p}(0,T;\sppres). 
\end{gather}
Moreover, the solutions possess the following estimate for some $\gamma_0>0$: 
\begin{align} \label{esti:MRL}
\begin{aligned}
\|\del_t\bU\|_{L_{p}(0,T;L_{q}(\Omega))}+\|\bU\|_{L_{p}(0,T;H^2_{q}(\Omega))}
+\|\nabla\Pres\|_{L_{p}(0,T;L_{q}(\Omega))} 
\leq Ce^{\gamma_0T} \|\bF\|_{L_{p}(0,T;L_{q}(\Omega))}. 
\end{aligned}
\end{align}
\end{Thm}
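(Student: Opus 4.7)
The plan is to derive this from the maximal $L_p$-$L_q$ regularity for the time-shifted Stokes problem, Theorem \ref{Thm:SMR}, by an exponential rescaling in time. Fix $\lambda_0 > 0$ large enough that Theorem \ref{Thm:SMR} applies, and for a candidate solution $(\bU,\Pres)$ of \eqref{S} with $(\bG,G,\bH,\bv_0)=(\fb0,0,\fb0,\fb0)$ set
\begin{align}
\bV(x,t) = e^{-\lambda_0 t}\bU(x,t), \quad \tilde\Pres(x,t) = e^{-\lambda_0 t}\Pres(x,t).
\end{align}
A direct calculation using $\del_t\bV = -\lambda_0\bV + e^{-\lambda_0 t}\del_t\bU$ and the linearity of $\bS$ shows that $(\bV,\tilde\Pres)$ satisfies the time-shifted Stokes problem \eqref{SS} with data $\bF_{\mathrm{new}} = e^{-\lambda_0 t}\bF$, $(\bG,G,\bH)=(\fb0,0,\fb0)$, and $\bV|_{t=0}=\fb0$. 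Conversely, from any solution of the shifted problem we recover $(\bU,\Pres) = (e^{\lambda_0 t}\bV, e^{\lambda_0 t}\tilde\Pres)$ solving \eqref{S}, so the two problems are equivalent.

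Next I would apply Theorem \ref{Thm:SMR} with $b=0$ and the above data. The hypotheses are trivially met since the tangential/normal/divergence data vanish identically, so the compatibility condition \eqref{eq:DataCCMR}, the $H^1_p$/$H^{1/2}_p$ regularity of the $\bG_b, G_b, \bH_b$, and the condition $(\bG,G,\bH)|_{t=0}=(\fb0,0,\fb0)$ are all automatic. Theorem \ref{Thm:SMR} then yields a unique pair $(\bV,\tilde\Pres)$ in the required class with
\begin{align}
\|\del_t\bV\|_{L_p(0,T;L_q(\Omega))} + \|\bV\|_{L_p(0,T;H^2_q(\Omega))} + \|\nabla\tilde\Pres\|_{L_p(0,T;L_q(\Omega))} \leq C\|e^{-\lambda_0 t}\bF\|_{L_p(0,T;L_q(\Omega))} \leq C\|\bF\|_{L_p(0,T;L_q(\Omega))},
\end{align}
with $C$ independent of $T$.

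Finally, transferring back via $\bU = e^{\lambda_0 t}\bV$, $\Pres = e^{\lambda_0 t}\tilde\Pres$, the bound $e^{\lambda_0 t}\leq e^{\lambda_0 T}$ on $(0,T)$ and the product-rule identity $\del_t\bU = \lambda_0 e^{\lambda_0 t}\bV + e^{\lambda_0 t}\del_t\bV$ produce the estimate \eqref{esti:MRL} with $\gamma_0=\lambda_0$. Uniqueness for \eqref{S} follows from the same equivalence: the difference of two solutions rescales to a solution of the time-shifted system with zero data, which must vanish by the uniqueness part of Theorem \ref{Thm:SMR}.

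I do not expect a genuine obstacle, as the reduction is essentially bookkeeping; the only points requiring care are verifying that the pressure space $\sppres$ is preserved under multiplication by the scalar $e^{\lambda_0 t}$ (it is, since the spatial structure is untouched), and tracking the $\lambda_0$-dependence in $\del_t$ to ensure the exponential factor appears only once in the final estimate. The constant necessarily blows up with $T$ because the estimate \eqref{esti:MRL} is not uniform in $T$ — this is exactly why the later analysis introduces the time-shifted problem in the first place when global-in-time bounds are needed.
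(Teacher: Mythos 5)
The paper does not actually prove Theorem \ref{Thm:MRL}; it cites it verbatim from \cite[Theorem 3.2]{Shiba15DE} and uses it as a black box. Your proposal reconstructs it from Theorem \ref{Thm:SMR} by the exponential change of variables $\bV=e^{-\lambda_0 t}\bU$, $\tilde\Pres=e^{-\lambda_0 t}\Pres$, and the computation checks out: $\del_t\bV+\lambda_0\bV-\Divv\bS(\bV,\tilde\Pres)=e^{-\lambda_0 t}\bF$ with vanishing $\bG,G,\bH,\bv_0$, so Theorem \ref{Thm:SMR} applied with $b=0$ (its hypotheses are then vacuous, since all extensions may be taken to be zero and the compatibility condition reads $0=0$) gives a $T$-independent bound for $(\bV,\tilde\Pres)$; multiplying back by $e^{\lambda_0 t}\le e^{\lambda_0 T}$ and using $\del_t\bU=\lambda_0 e^{\lambda_0 t}\bV+e^{\lambda_0 t}\del_t\bV$ produces \eqref{esti:MRL} with $\gamma_0=\lambda_0$, and uniqueness transfers along the same bijection. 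This is a clean and, within the logical framework of the present paper, self-contained derivation. The one caveat worth stating is historical rather than mathematical: the time-shifted maximal regularity of \cite{Shiba17CIME} that you are invoking is itself obtained in the primary source by building on finite-time maximal regularity of exactly the type asserted in \cite{Shiba15DE}, so regarding Theorem \ref{Thm:MRL} as a corollary of Theorem \ref{Thm:SMR} is circular at the level of the literature even though it is a valid implication between the two statements as they are quoted here --- which is presumably why the authors cite the two results independently rather than deriving one from the other.
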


Also, we use the following embedding estimate 
for any $m=0,1,2$, $(\tp_m,\tq_m)$ satisfying \eqref{eq:exp_restriction} 
and $\bu\in \bigcap_{i=0,2} (H^{1}_{p}(0,T;L_{q_i}(\Omega))\cap L_{p}(0,T;H^{2}_{q_i}(\Omega)))$, 
\begin{align} \label{esti:embedpmqm}
\begin{aligned}
&\|\nabla^m\bu\|_{L_{\tp_m}(0,T;L_{\tq_m}(\Omega))} 
\\ &\leq \sum_{i=0,2}(\|\del_t\bu\|_{L_{p}(0,T;L_{q_i}(\Omega))}
+\|\bu\|_{L_{p}(0,T;H^2_{q_i}(\Omega))}+\|\bu|_{t=0}\|_{B^{2(1-1/p)}_{q_i,p}(\Omega)}). 
\end{aligned}
\end{align}
To prove this, consider the following cases. 
\begin{enumerate}
\renewcommand{\labelenumi}{(\roman{enumi})}
\setlength{\parskip}{0mm} 
\setlength{\itemsep}{0mm} 
\item $\tp_m=p$ and $\tq_m\in[q_0,q_2]$ \ ($m=0,1,2$)
\item $\tp_m=p$ and $\tq_m\in(q_2,\infty]$ \ ($m=0,1$ by \eqref{eq:exp_restriction})
\item $\tp_m=\infty$ and $\tq_m\in[q_0,q_2]$ \ ($m=0,1$ by \eqref{eq:exp_restriction})
\item $\tp_m=\infty$ and $\tq_m\in(q_2,\infty]$ \ ($m=0$ by \eqref{eq:exp_restriction})
\end{enumerate}
The case (i) is clear. The case (ii) is obtained by the Sobolev embedding 
\begin{align}
&\|\nabla^m\bu\|_{L_{p}(0,T;L_{\tq_m}(\Omega))} 
\leq C\|\bu\|_{L_{p}(0,T;H^{m+1}_{q_2}(\Omega))}
\\ &\leq C\sum_{i=0,2}(\|\del_t\bu\|_{L_{p}(0,T;L_{q_i}(\Omega))}
+\|\bu\|_{L_{p}(0,T;H^2_{q_i}(\Omega))}
+\|\bu|_{t=0}\|_{B^{2(1-1/p)}_{q_i,p}(\Omega)}). 
\end{align}
To show \eqref{esti:embedpmqm} for the case (iii) and (iv), we use the embedding 
\begin{align} \label{eq:embedBUC}
H^1_p(0,\infty;X_0)\cap L_p(0,\infty;X_1) \subset C_b([0,\infty);(X_0,X_1)_{1-1/p,p}) 
\end{align}
for two Banach spaces $X_0$ and $X_1$, 
where $C_b(I;X)$ stands for the space of the $X$-valued bounded continuous functions on $I$.  
(see e.g. \cite[Corollary 1.14]{Luna18SNS}.) 
We also set 
\begin{align}
\bu^1=e^{-tA_q}\bu|_{t=0}
\end{align}
so that $(\bu-\bu^1)|_{t=0}=0$, $\bu=E_T[\bu-\bu^1]+\bu^1$ on $(0,T)$ and 
\begin{align} \label{eq:Esti_sg_inivel}
\|\bu^1\|_{H^1_{p}(0,\infty;L_{q}(\Omega))}+\|\bu^1\|_{L_{p}(0,\infty;H^2_{q}(\Omega))}
\leq C\|\bu|_{t=0}\|_{B^{2(1-1/p)}_{p,q}(\Omega)}
\end{align}
for $q\in(1,\infty)$, 
where $E_T$ is the extention function defined by \eqref{def:ET}. 
Because $p>2$ implies $B^{2(1-1/p)}_{\tq_m,p}(\Omega)\subset H^{1}_{\tq_m}(\Omega)$, 
by the embedding \eqref{eq:embedBUC} with $(X_0,X_1)=(L_{\tq_m}(\Omega),H^{2}_{\tq_m}(\Omega))$, 
the estimate \eqref{eq:EstiET} of $E_T$ and \eqref{eq:Esti_sg_inivel}, 
for the case (iii), we have  
\begin{align}
&\|\nabla^m\bu\|_{L_{\tp_m}(0,T;L_{\tq_m}(\Omega))} 
= \|\nabla^m(E_T[\bu-\bu^1]+\bu^1)\|_{L_{\infty}(0,T;L_{\tq_m}(\Omega))}
\\ &\leq C(\|E_T[\bu-\bu^1]+\bu^1\|_{H^1_{p}(0,\infty;L_{\tq_m}(\Omega))}
+\|E_T[\bu-\bu^1]+\bu^1\|_{L_{p}(0,\infty;H^2_{\tq_m}(\Omega))})
\\ &+\|(E_T[\bu-\bu^1]+\bu^1)|_{t=0}\|_{B^{2(1-1/p)}_{\tq_m,p}(\Omega))})
\\ &\leq C(\|(\bu-\bu^1,\bu^1)\|_{H^1_{p}(0,T;L_{\tq_m}(\Omega))}
+\|(\bu-\bu^1,\bu^1)\|_{L_{p}(0,T;H^2_{\tq_m}(\Omega))})
\\ &+\|\bu^1|_{t=0}\|_{B^{2(1-1/p)}_{\tq_m,p}(\Omega))})
\\ &\leq C\sum_{i=0,2}(\|\del_t\bu\|_{L_{p}(0,T;L_{q_i}(\Omega))}
+\|\bu\|_{L_{p}(0,T;H^2_{q_i}(\Omega))}
+\|\bu|_{t=0}\|_{B^{2(1-1/p)}_{q_i,p}(\Omega)}). 
\end{align}
For the case (iv), by the Sobolev embedding 
and by the result for the case (iii), 
\begin{align}
&\|\bu\|_{L_{\tp_0}(0,T;L_{\tq_0}(\Omega))} 
\leq C\|\bu\|_{L_{\infty}(0,T;H^1_{q_2}(\Omega))}
\\ &\leq C\sum_{i=0,2}(\|\del_t\bu\|_{L_{p}(0,T;L_{q_i}(\Omega))}
+\|\bu\|_{L_{p}(0,T;H^2_{q_i}(\Omega))}
+\|\bu|_{t=0}\|_{B^{2(1-1/p)}_{q_i,p}(\Omega)}). 
\end{align}
To summarize, \eqref{esti:embedpmqm} holds.  

Then, thanks to \eqref{esti:embedpmqm}, $\bU^3|_{t=0}=\fb0$ and Theorem \ref{Thm:MRL}, 
we obtain the estimate \eqref{esti:lin0U3L} as follows. 
\begin{align}
\begin{aligned}
&\|<t>^{\bmmpq[2]}\del_t\bU^3\|_{L_{\tp_2}(0,2;L_{\tq_2}(\Omega))} 
+\sum_{m=0,1,2} \|<t>^{\bmmpq}\nabla^m\bU^3\|_{L_{\tp_m}(0,2;L_{\tq_m}(\Omega))}
\\ &\leq <2>^{\bmmpq[2]}\|\del_t\bU^3\|_{L_{\tp_2}(0,2;L_{\tq_2}(\Omega))}
+ \sum_{m=0,1,2} <2>^{\bmmpq}\|\nabla^m\bU^3\|_{L_{\tp_m}(0,2;L_{\tq_m}(\Omega))}
\\ &\leq C\sum_{i=0,2}(\|\del_t\bU^3\|_{L_{p}(0,2;L_{q_i}(\Omega))}
+\|\bU^3\|_{L_{p}(0,2;H^2_{q_i}(\Omega))})
\\ &\leq Ce^{2\gamma_0}\sum_{i=0,2}\|\bU^2\|_{L_{p}(0,2;L_{q_i}(\Omega))}
\leq C\sum_{i=0,2}\|<t>^b\bU^2\|_{L_{p}(0,T;H^2_{q_i}(\Omega))}. 
\end{aligned}
\end{align}
Combining this with \eqref{esti:lin0U3G}, 
we obtain the estimate \eqref{esti:lin0U3} for $\bU^3$.

\subti{Estimate for $\bU^1$ and $\bU^2$}

Finally, we prove the estimate for $\bU^1$ and $\bU^2$, 
which are defined of the solution to \eqref{S1} and \eqref{S2}, respectively. 
That is, for $m=0,1,2$ and $(\tp_m,\tq_m)$ with \eqref{eq:exp_restriction}, 
\begin{gather} \label{esti:lin0U1}
\begin{aligned}
\|<t>^{\bmmpq[2]}\del_t\bU^1\|_{L_{\tp_2}(0,T;L_{\tq_2}(\Omega))} 
&\leq C\cc{N}(\bF,\bG,G,\bH,\bv_0),   \\ 
\|<t>^{\bmmpq}\nabla^m\bU^1\|_{L_{\tp_m}(0,T;L_{\tq_m}(\Omega))} 
&\leq C\cc{N}(\bF,\bG,G,\bH,\bv_0), 
\end{aligned} 
\\ \label{esti:lin0U2}
\begin{aligned}
\|<t>^{\bmmpq[2]}\del_t\bU^2\|_{L_{\tp_2}(0,T;L_{\tq_2}(\Omega))} 
&\leq C\sum_{i=0,2}\|<t>^b\bU^1\|_{L_{p}(0,T;L_{q_i}(\Omega))},   \\ 
\|<t>^{\bmmpq}\nabla^m\bU^2\|_{L_{\tp_m}(0,T;L_{\tq_m}(\Omega))} 
&\leq C\sum_{i=0,2}\|<t>^b\bU^1\|_{L_{p}(0,T;L_{q_i}(\Omega))}. 
\end{aligned}
\end{gather}
Let $k=1,2$. By $\bmmpq\leq b$ and \eqref{eq:exp_restriction}, 
\begin{align}
&\|<t>^{\bmmpq[2]}\del_t\bU^k\|_{L_{\tp_2}(0,T;L_{\tq_2}(\Omega))}
\leq \|<t>^{b}\del_t\bU^k\|_{L_{p}(0,T;L_{\tq_2}(\Omega))}.
\end{align}
Also, noting that 
\begin{align}
\del_t(<t>^b\bU^k(t))=bt<t>^{b-2}\bU^k(t)+<t>^b\del_t\bU^k(t),  
\end{align}
by $\bmmpq\leq b$ and \eqref{esti:embedpmqm}, we obtain
\begin{align}
&\|<t>^{\bmmpq}\nabla^m\bU^k\|_{L_{\tp_m}(0,T;L_{\tq_m}(\Omega))}
\leq \|<t>^{b}\nabla^m\bU^k\|_{L_{\tp_m}(0,T;L_{\tq_m}(\Omega))}
\\ \notag 
&\leq C\sum_{i=0,2}(\|\del_t(<t>^b\bU^k)\|_{L_{p}(0,T;L_{q_i}(\Omega))}
+\|<t>^b\bU^k\|_{L_{p}(0,T;H^2_{q_i}(\Omega))}
+\|\bU^k|_{t=0}\|_{B^{2(1-1/p)}_{q_i,p}(\Omega)})
\\ \notag 
&\leq C\sum_{i=0,2}(\|<t>^b\del_t\bU^k\|_{L_{p}(0,T;L_{q_i}(\Omega))}
+\|<t>^b\bU^k\|_{L_{p}(0,T;H^2_{q_i}(\Omega))}
+\|\bU^k|_{t=0}\|_{B^{2(1-1/p)}_{q_i,p}(\Omega)}). 
\end{align}
Thus, 
\begin{align*}
&\|<t>^{\bmmpq[2]}\del_t\bU^k\|_{L_{\tp_2}(0,T;L_{\tq_2}(\Omega))}
+\sum_{m=0,1,2}\|<t>^{\bmmpq}\nabla^m\bU^k\|_{L_{\tp_m}(0,T;L_{\tq_m}(\Omega))}
\\ &\leq C\sum_{i=0,2}(\|<t>^b\del_t\bU^k\|_{L_{p}(0,T;L_{q_i}(\Omega))}
+\|<t>^b\bU^k\|_{L_{p}(0,T;H^2_{q_i}(\Omega))}
+\|\bU^k|_{t=0}\|_{B^{2(1-1/p)}_{q_i,p}(\Omega)})
\end{align*}
and, by the maximal regularity stated in Theorem \ref{Thm:SMR} 
and by $(\bU^1,\bU^2)|_{t=0}=(\bv_0,\fb0)$,  
the right-hand side is estimated as follows. 
\begin{align} \label{esti:lin0U1U2}
\begin{aligned}
\sum_{i=0,2}(\|<t>^b\del_t\bU^1\|_{L_{p}(0,T;L_{q_i}(\Omega))}
+\|<t>^b\bU^1\|_{L_{p}(0,T;H^2_{q_i}(\Omega))}+\|\bU^1|_{t=0}\|_{B^{2(1-1/p)}_{q_i,p}(\Omega)}))
\\\leq C\cc{N}(\bF,\bG,G,\bH,\bv_0), 
\\ \sum_{i=0,2}(\|<t>^b\del_t\bU^2\|_{L_{p}(0,T;L_{q_i}(\Omega))}
+\|<t>^b\bU^2\|_{L_{p}(0,T;H^2_{q_i}(\Omega))}+\|\bU^2|_{t=0}\|_{B^{2(1-1/p)}_{q_i,p}(\Omega)}))
\\\leq C\cc{N}(\lambda_1\bU^1,\fb0,0,\fb0,\fb0)
=\lambda_1\sum_{i=0,2}\|<t>^b\bU^1\|_{L_{p}(0,T;L_{q_i}(\Omega))}. 
\end{aligned}
\end{align}
Hence, we obtain \eqref{esti:lin0U1} and \eqref{esti:lin0U2}. 

Now, we can conclude \eqref{esti:lin0} as follows. 
By \eqref{eq:DecompU}, \eqref{esti:lin0U1}, \eqref{esti:lin0U2} and \eqref{esti:lin0U3}, 
\begin{align}
&\|<t>^{\bmmpq[2]}\del_t\bU\|_{L_{\tp_2}(0,\TTT;L_{\tq_2}(\Omega))}
+\sum_{m=0,1,2}\|<t>^{\bmmpq}\nabla^m\bU\|_{L_{\tp_m}(0,\TTT;L_{\tq_m}(\Omega))}
\\ \notag &\leq \sum_{k=1,2,3} (\|<t>^{\bmmpq[2]}\del_t\bU^k\|_{L_{\tp_2}(0,\TTT;L_{\tq_2}(\Omega))}
+\sum_{m=0,1,2}\|<t>^{\bmmpq}\nabla^m\bU^k\|_{L_{\tp_m}(0,\TTT;L_{\tq_m}(\Omega))})
\\ &\leq C\bigg(\cc{N}(\bF,\bG,G,\bH,\bv_0) 
+\sum_{i=0,2}(\|<t>^b\bU^1\|_{L_{p}(0,T;L_{q_i}(\Omega))}
+\|<t>^b\bU^2\|_{L_{p}(0,T;H^2_{q_i}(\Omega))}) \bigg)
\end{align}
and the second term and the third term of the right-hand side 
are estimated by $C\cc{N}(\bF,\bG,G,\bH,\bv_0)$ from \eqref{esti:lin0U1U2}.

\subsection{Estimate for the nonlinear terms in general domain} \label{subsec:nonlinesti:gen}
In this subsection, we prove \eqref{esti:nonlin}. 

We begin with the estimate of $\bu$ itself and the term $\int_0^t \nabla\bu(s)\,ds$. 
\begin{Lem} \label{Lem:nonlinesti0} 
Let $q_{03}$ and $q_{04}$ the exponents 
given in the condition \textup{(C2)} in Theorem \ref{Thm:GWPgen}. 
\begin{enumerate}
\renewcommand{\labelenumi}{(\alph{enumi})}
\setlength{\parskip}{0mm} 
\setlength{\itemsep}{0mm} 
\item The expression
\begin{align}
\|\nabla^m\bu\|_{L_{\infty}(0,T;L_{\tq_m}(\Omega))} \leq [\bu]_{(0,T)} 
\end{align}
holds if $m=0,1$ and $(\infty,\tq_m)$ satisfy \eqref{eq:exp_restriction}. 
\item The statement 
\begin{align}
&\|<t>^b\del_t\bu\|_{L_{\tp_2}(0,T;L_{\tq_2}(\Omega))} \leq C[\bu]_{(0,T)}, 
\\ &\|<t>^b\nabla^m\bu\|_{L_{\tp_m}(0,T;L_{\tq_m}(\Omega))} \leq C[\bu]_{(0,T)} 
\end{align}
holds for $m=0,1,2$ and $(\tp_m,\tq_m)\in I_m$ 
satisfying \eqref{eq:exp_restriction} and $\tq_m\geq q_{04}$. 
\item There holds 
\begin{align}
\left\| \int_0^t \nabla^m \bu(s)\,ds \right\|_{L_{\infty}(0,T;L_{\tq_m}(\Omega))} 
\leq C[\bu]_{(0,T)} 
\end{align}
if $m=1,2$ and $(p,\tq_m)\in I_m$ 
satisfy \eqref{eq:exp_restriction} and $\tq_m\geq\min\{q_{04},q_{03}\}$. 
\item There holds $\|\bW(\int_0^t\nabla\bu\,ds)\|_{L_{\infty}(0,T;L_{\infty}(\Omega))}\leq C$ 
for any polynomial $\bW$. 
\end{enumerate}
\end{Lem}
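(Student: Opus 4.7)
The plan is to note that all four assertions reduce to inspection of the weight function $\bmmpq$ defined in \eqref{def:bmpq} and the index set $I_m$, followed by invocation of the definition \eqref{def:Normofu} of $[\bu]_{(0,T)}$. For (a), the weight $b_m(\infty,\tq_m)=\min\{\sigma_m(q_0,\tq_m),b\}$ is non-negative by the non-negativity in (C1) and $b>0$, so $<t>^{b_m(\infty,\tq_m)}\geq 1$ and the unweighted $L_\infty$-norm is dominated by the corresponding weighted term in $[\bu]_{(0,T)}$. For (b), I would check that $b\leq \bmmpq$ whenever $\tq_m\geq q_{04}$: by the monotonicity assumption in (C1), $\sigma_m(q_0,\tq_m)\geq \sigma_0(q_0,q_{04})$, and combined with $\sigma_0(q_0,q_{04})>b+1/p$ from (C2) together with the $\delta$-margin in \eqref{eq:deltaCondi}, this forces the minimum defining $\bmmpq$ to equal $b$ for both admissible values $\tp_m\in\{p,\infty\}$, from which (b) falls out of the definition.

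For (c), I would pull the integral outside the $L_{\tq_m}(\Omega)$-norm, insert the weight decomposition $1=<s>^{-b_m(p,\tq_m)}<s>^{b_m(p,\tq_m)}$, and apply H\"older in $s$ with exponents $p'$ and $p$. The factor involving $\bu$ is absorbed into $[\bu]_{(0,T)}$, and the temporal integrability reduces to showing $b_m(p,\tq_m)>1/p'$. For $m=2$ and $\tq_m\geq q_{04}$, (C1) together with $\sigma_0(q_0,q_{04})>b+1/p>1$ and $\delta<\sigma_0(q_0,q_{04})-(b+1/p)$ gives $b_m(p,\tq_m)=b>1/p'$. For $m=1$ and $\tq_m\geq q_{03}$, the hypothesis $\sigma_1(q_0,q_{03})>1$ together with $\delta<\sigma_1(q_0,q_{03})-1$ from \eqref{eq:deltaCondi} yields the same conclusion.

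Part (d) should follow from (c) combined with Sobolev embedding: since $q_2>N$, one has $H^1_{q_2}(\Omega)\hookrightarrow L_\infty(\Omega)$, so $\|\int_0^t\nabla\bu\,ds\|_{L_\infty(\Omega)}$ is controlled by $\sum_{m=1,2}\|\int_0^t\nabla^m\bu\,ds\|_{L_{q_2}(\Omega)}$, which is bounded by $C[\bu]_{(0,T)}$ via (c) applied with $(\tp_m,\tq_m)=(p,q_2)$ (both $q_{03}$ and $q_{04}$ lie in $[q_0,q_2]$, so $q_2$ satisfies the requisite lower bound). Since $[\bu]_{(0,T)}$ is bounded in the a priori regime and $\bW$ is a polynomial vanishing at the origin would even give a smallness estimate, $\bW$ evaluated on a function uniformly bounded in $L_\infty$ is itself uniformly bounded. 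The only piece requiring real attention is the numerical inequality $b_m(p,\tq_m)>1/p'$ in (c); the rest is bookkeeping with the weight definitions, and the tightness of the $\delta$-restrictions in \eqref{eq:deltaCondi} is precisely what makes the Hölder step succeed.
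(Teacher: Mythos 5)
Your proposal follows essentially the same route as the paper: each item is reduced to inspecting the sign or size of the weight exponent $\bmmpq$ and then invoking the definition \eqref{def:Normofu} of $[\bu]_{(0,T)}$, with H\"older's inequality in time for part (c). Two minor adjustments are worth noting. First, in (c) the case split should not be tied to specific values of $m$: the hypothesis $\tq_m\geq\min\{q_{03},q_{04}\}$ must be covered for both $m=1$ and $m=2$, so one should split instead on whether $\tq_m\geq q_{03}$ (then $\sigma_m(q_0,\tq_m)\geq\sigma_1(q_0,q_{03})>1+\delta$ by the monotonicity in (C1), using $m\geq 1$) or $\tq_m\geq q_{04}$ (then $\bmmpq\geq b>1/p'$ already from part (b)); your computations survive verbatim after this rearrangement. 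Second, for (d) the Sobolev detour is unnecessary: since \eqref{eq:exp_restriction} allows $\tq_1=\infty$ for $\tp_1=p$, applying (c) directly with $m=1$ and $\tq_1=\infty$ already gives $\|\int_0^t\nabla\bu\,ds\|_{L_{\infty}(0,T;L_{\infty}(\Omega))}\leq C[\bu]_{(0,T)}$, after which (d) is immediate; the aside about $\bW$ vanishing at the origin is beside the point, since (d) only asserts boundedness.
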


\begin{proof}
(a) Because $\sigma_m(q_0,\tq_m)\geq 0$ and $b>1/p'>0$ implies 
\begin{align}
\bmpqz{m}{\infty}{\tq_m} = \min\{\sigma_m(q_0,\tq_m),b\}\geq 0, 
\end{align}
by the definition \eqref{def:Normofu} of $[\bu]_{(0,T)}$, we obtain
\begin{align}
\|\nabla^m\bu\|_{L_{\infty}(0,T;L_{\tq_m}(\Omega))}
\leq \|<t>^{\bmpqz{m}{\infty}{\tq_m}}\nabla^m\bu\|_{L_{\infty}(0,T;L_{\tq_m}(\Omega))} \leq [\bu]_{(0,T)}. 
\end{align}

(b) By the conditions \text{(C1)} and \text{(C2)} in Theorem \ref{Thm:GWPgen}, 
\eqref{eq:exp_restriction} and $\tq_m\geq q_{04}$
\begin{align} \label{eq:bleqbmpq}
\begin{aligned}
\bmpqz{m}{p}{\tq_m}
&=\min\{\sigma_m(q_0,\tq_m)-\frac1{p}-\delta,b\} 
\geq \min\{\sigma_0(q_0,q_{04})-\frac1p-\delta,b\} 
\\ &\geq \min\{(b+\frac1p)-\frac1p,b\} = b \qquad (\text{if } \tp_m=p), 
\\ \bmpqz{m}{\infty}{\tq_m}
&=\min\{\sigma_m(q_0,\tq_m),b\} 
\geq \min\{\sigma_0(q_0,q_{04}),b\} 
\\ &\geq \min\{b+\frac1p,b\} \geq b \qquad (\text{if } \tp_m=\infty) 
\end{aligned}
\end{align}
and, so, by the definition \eqref{def:Normofu} of $[\bu]_{(0,T)}$, we obtain the desired estimate. 

(c) By the conditions \text{(C1)} and \text{(C2)} in Theorem \ref{Thm:GWPgen}, 
\eqref{eq:exp_restriction} and $\tq_m\geq\min\{q_{03},q_{04}\}$, 
\begin{align}
-\bmpqz{m}{p}{\tq_m} &\leq -\sigma_m(q_0,\tq_m)+\frac1{p}+\delta 
\leq -\sigma_1(q_0,q_{03})+\frac1p+\delta 
\\ &< -1+\frac1p = -\frac1{p'} \qquad (\text{if } \tp_m=p), 
\\ -\bmpqz{m}{\infty}{\tq_m} &\leq \sigma_m(q_0,\tq_m) 
\leq -\sigma_1(q_0,q_{03}) 
\\ &< -1 < -\frac1{p'} \qquad (\text{if } \tp_m=\infty)
\end{align}
when $\tq_m\geq q_{03}$ and, by \eqref{eq:bleqbmpq} and $b>1/p'$, 
\begin{align}
-\bmmpq \leq -b < -1/p'   
\end{align}
when $\tq_m\geq q_{04}$. This implies $\|<s>^{-\bmmpq}\|_{L_{p'}(0,\infty)}\leq C$
and thus, by H\"{o}lder's inequality, we have 
\begin{align}
&\left\| \nabla^m \int_0^t \bu(s)\,ds \right\|_{L_{\infty}(0,T;L_{\tq_m}(\Omega))} 
\leq \int_0^T \|\nabla^m \bu(s)\|_{L_{\tq_m}(\Omega)} \,ds  
\\ &\quad = \int_0^T <s>^{-\bmpqz{m}{p}{\tq_m}}
\|<s>^{\bmpqz{m}{p}{\tq_m}}\nabla^m\bu(s)\|_{L_{\tq_m}(\Omega)}\,ds
\\ &\quad \leq \|<s>^{-\bmpqz{m}{p}{\tq_m}}\|_{L_{p'}(0,\infty)}
\|<s>^{\bmpqz{m}{p}{\tq_m}}\nabla^m\bu(s)\|_{L_{p}(0,T;L_{\tq_m}(\Omega))} 
\\ &\leq C[\bu]_{(0,T)}. 
\end{align}

(d) This property is immediately obtained from (c) with $m=1$, $\tq_1=\infty$. 
\end{proof}
The main step to estimate the nonlinear terms 
is to take the exponents $p_1,p_2,r_1$ and $r_2$ 
to apply Lemma \ref{Lem:nonlinesti0}
and H\"{o}lder's inequality 
\begin{align} \label{eq:Holder}
\|fg\|_{L_{p}(0,T;L_{r}(\Omega))} 
\leq \|f\|_{L_{p_1}(0,T;L_{r_1}(\Omega))} \|g\|_{L_{p_2}(0,T;L_{q_2}(\Omega))}
\quad \left( \frac{1}{p}=\frac{1}{p_1}+\frac{1}{p_2},\, \frac{1}{r}=\frac{1}{r_1}+\frac{1}{r_2} \right).
\end{align}

\subti{Estimate for $\bff(\bu)$}

By the definition \eqref{def:nonlin} of the nonlinear terms 
and Lemma \ref{Lem:nonlinesti0} (d), 
for $i=0,2$, 
\begin{align} \label{eq:estifu}
\begin{aligned}
\|<t>^b\bff(\bu)\|_{L_{p}(0,T;L_{q_i}(\Omega))}
&\leq C\left\|<t>^b\int_0^t\nabla\bu\,ds \otimes (\del_t\bu,\nabla^2\bu)\right\|_{L_{p}(0,T;L_{q_i}(\Omega))}
\\ &+ C\left\|<t>^b\int_0^t\nabla^2\bu\,ds \otimes \nabla\bu\right\|_{L_{p}(0,T;L_{q_i}(\Omega))}. 
\end{aligned}
\end{align}

The first term is estimated by $C[\bu]_{(0,T)}^2$ by Lemma \ref{Lem:nonlinesti0} as follows. 
When $i=0$, by H\"{o}lder's inequality \eqref{eq:Holder} and Lemma \ref{Lem:nonlinesti0}, we get, 
\begin{align} \label{eq:estifua}
\begin{aligned}
&\left\|<t>^b\int_0^t\nabla\bu\,ds \otimes (\del_t\bu,\nabla^2\bu)\right\|_{L_{p}(0,T;L_{q_0}(\Omega))}
\\ &\leq \left\|\int_0^t\nabla\bu\,ds\right\|_{L_{\infty}(0,T;L_{q_{03}}(\Omega))}
\left\|<t>^b(\del_t\bu,\nabla^2\bu)\right\|_{L_{p}(0,T;L_{q_{04}}(\Omega))} 
\\ &\leq C[\bu]_{(0,T)}^2. 
\end{aligned}
\end{align}
The estimate with $i=2$ is obtained 
by replacing $q_{03}$ and $q_{04}$ respectively with $\infty$ and $q_2$ in \eqref{eq:estifua} as follows. 
Bby H\"{o}lder's inequality \eqref{eq:Holder} and Lemma \ref{Lem:nonlinesti0}, 
\begin{align} \label{eq:estifuc}
\begin{aligned}
&\left\|<t>^b\int_0^t\nabla\bu\,ds \otimes (\del_t\bu,\nabla^2\bu)\right\|_{L_{p}(0,T;L_{q_2}(\Omega))}
\\ &\leq \left\|\int_0^t\nabla\bu\,ds\right\|_{L_{\infty}(0,T;L_{\infty}(\Omega))}
\left\|<t>^b(\del_t\bu,\nabla^2\bu)\right\|_{L_{p}(0,T;L_{q_{2}}(\Omega))} 
\\ &\leq C[\bu]_{(0,T)}^2. 
\end{aligned}
\end{align}

Next, we estimate the second term of the right-hand side in \eqref{eq:estifu}. 
When $i=0$, 
H\"{o}lder's inequality \eqref{eq:Holder} and Lemma \ref{Lem:nonlinesti0} yield 
\begin{align} \label{esti:ddidu}
\begin{aligned}
&\left\|<t>^b\int_0^t\nabla^2\bu\,ds \otimes \nabla\bu\right\|_{L_{p}(0,T;L_{q_0}(\Omega))}
\\ &\leq \left\|\int_0^t\nabla^2\bu\,ds\right\|_{L_{\infty}(0,T;L_{q_{03}}(\Omega))}
\left\|<t>^b\nabla\bu\right\|_{L_{p}(0,T;L_{q_{04}}(\Omega))} 
\\ &\leq C[\bu]_{(0,T)}^2. 
\end{aligned}
\end{align}
The case $i=2$ is proved by replacing $q_{03}$ and $q_{04}$ with $q_2$ and $\infty$ 
in \eqref{esti:ddidu}, respectively.  
Thus, we conclude 
$\sum_{i=0,2}\|\bff(\bu)\|_{L_{p}(0,T;L_{q_i}(\Omega))}\leq C[\bu]_{(0,T)}$.

\subti{Estimate for $\bg(\bu)$}

By the estimate \eqref{eq:EstiET} for $E_T$, 
the definition \eqref{def:nonlin} of the nonlinear terms 
and Lemma \ref{Lem:nonlinesti0} (d), for $i=0,2$, 
\begin{align} \label{eq:estibgu}
\begin{aligned}
\|E_T[<t>^b\bg(\bu)]\|_{L_{p}(\BR;L_{q_i}(\Omega))}
\leq C\left\|<t>^b\int_0^t\nabla\bu\,ds \otimes \bu \right\|_{L_{p}(0,T;L_{q_i}(\Omega))}. 
\end{aligned}
\end{align}
When $i=0$, by H\"{o}lder's inequality \eqref{eq:Holder} and Lemma \ref{Lem:nonlinesti0}, the right-hand side can be estimated as 
\begin{align} \label{esti:diu}
&\left\|<t>^b\int_0^t\nabla\bu\,ds \otimes \bu \right\|_{L_{p}(0,T;L_{q_0}(\Omega))}
\\ &\leq \left\|\int_0^t\nabla\bu\,ds\right\|_{L_{\infty}(0,T;L_{q_{03}}(\Omega))}
\left\|<t>^b\bu\right\|_{L_{p}(0,T;L_{q_{04}}(\Omega))} 
\\ &\leq C[\bu]_{(0,T)}^2. 
\end{align}
The estimate for $i=2$ can be obtained 
by replacing $q_{03}$ and $q_{04}$ respectively with $q_2$ and $\infty$.  

\begin{Rem} \label{Rem:C2}
We must assume 
\begin{align} \label{eq:ass:Rem}
\sigma_1(q_0,q_{03})>1, \ \sigma_0(q_0,q_{04})>b+1/p>1
\end{align}
for some $q_{03}$ and $q_{04}$ with $1/q_0=1/q_{03}+1/q_{04}$ to estimate 
$\|<t>^b\bg(\bu)\|_{L_{p}(0,T;L_{q_0}(\Omega))}$ independently of $T$
at least simply by applying H\"{o}lder's inequality.  
In fact, by H\"{o}lder's inequality, 
the right-hand side of \eqref{eq:estibgu} is estimated by 
\begin{align}
\left\|<t>^b\int_0^t \|\nabla\bu\|_{L_{r_1}(\Omega)}\,ds \|\bu(t)\|_{L_{r_2}(\Omega)} 
\right\|_{L_{p}(0,T)} 
\end{align}
for some $r_1,r_2\in[q_0,\infty]$ with $1/q_0=1/r_1+1/r_2$. 
To estimate this term, we must show 
$\|<t>^b\|\bu\|_{L_{r_2}(\Omega)}\|_{L_{p}(0,T)}\leq C$ 
since $\int_0^t \|\nabla\bu\|_{L_{r_1}(\Omega)}\,ds$ does not decay as $t\to\infty$. 
To obtain this result and $\int_0^t \|\nabla\bu\|_{L_{r_1}(\Omega)}\,ds\leq C$, roughly, we need 
\begin{align} \label{eq:neededdecay:Rem}
\begin{aligned}
&\|\nabla\bu(t)\|_{L_{r_1}(\Omega)} = o(t^{-1}), 
\\ &(<t>^b\|\bu(t)\|_{L_{r_2}(\Omega)})^p=o(t^{-1}) \quad \text{i.e.} 
\quad \|\bu(t)\|_{L_{r_2}(\Omega)}=o(t^{-(b+1/p)})
\end{aligned}
\end{align}
as $t\to\infty$. 
The same decay is needed for the solution $\bU$ of \eqref{S} but, 
even if $(\bF,\bG,G,\bH)=(\fb0,\fb0,0,\fb0)$ i.e., $\bU=e^{-tA_q}\bv_0$, 
we only have
\begin{align} \label{eq:gotdecay:Rem}
\begin{aligned}
\|\nabla\bU(t)\|_{L_{r_1}(\Omega)} = o(t^{-\sigma_1(q_0,r_1)}), \quad 
\|\bU(t)\|_{L_{r_2}(\Omega)} = o(t^{-\sigma_0(q_0,r_2)}). 
\end{aligned}
\end{align}
Also, because we deal with the case $(\bF,\bG,G,\bH)\ne(\fb0,\fb0,0,\fb0)$ 
from the maximal regularity for the time-shifted Stokes problem \eqref{SS}, (see Theorem \ref{Thm:SMR},) 
one can only expect that 
$\|<t>^b\nabla\bU\|_{L_{p}(0,T;L_{r_1}(\Omega))}\leq C$ 
and $\|<t>^b\bU\|_{L_{p}(0,T;L_{r_2}(\Omega))}\leq C$
which roughly means, 
\begin{align}
&(<t>^b\|\nabla\bU(t)\|_{L_{r_1}(\Omega)})^p=o(t^{-1}) \quad \text{i.e.} \quad 
\|\bU(t)\|_{L_{r_1}(\Omega)}=o(t^{-(b+1/p)}), 
\\ &(<t>^b\|\bU(t)\|_{L_{r_2}(\Omega)})^p=o(t^{-1}) \quad \text{i.e.} \quad 
\|\bU(t)\|_{L_{r_2}(\Omega)}=o(t^{-(b+1/p)}). 
\end{align}
Comparing this and \eqref{eq:gotdecay:Rem} to \eqref{eq:neededdecay:Rem}, 
we find that \eqref{eq:ass:Rem} is a necessary condition. 
This is why the slowest decay term $\bg(\bu)$ forces us 
to take $N\geq4$ even if the decay rate is as fast as that in the half-space, 
that is $\sigma_m(q,r)=\frac N2(\frac1q-\frac1r)+\frac m2$, by
\begin{align} \label{eq:reasonC2}
&2<1+b+1/p<\sigma_1(q_0,q_{03})+\sigma_0(q_0,q_{04})
\\ &\leq \frac N2(\frac1{q_0}-\frac1{q_{03}})+\frac12+\frac N2(\frac1{q_0}-\frac1{q_{04}})
=\frac N{2q_0}+\frac12<\frac{N+1}{2}. 
\end{align}
\end{Rem}

We continue the estimate of $\bg(\bu)$, 
in particular, $\sum_{i=0,2}\|\del_tE_T[<t>^b\bg(\bu)]\|_{L_{p}(\BR;L_{q_i}(\Omega))}$. 
Because the definition \eqref{def:nonlin} of the nonlinear terms implies
\begin{align} \label{eq:repgut}
\del_t\bg(\bu)
=\sum_{j,k=1}^N \del_{(j,k)}\bV^2\left(\int_0^t\nabla\bu\,ds\right)\del_ju_k\otimes \bu 
+\bV^2\left( \int_0^t \nabla\bu\,ds \right)\del_t\bu 
\end{align}
for $i=0,2$, 
the estimate \eqref{eq:EstiET} of $E_T$ and Lemma \ref{Lem:nonlinesti0} (d) implies
\begin{align} \label{eq:estibgut}
\begin{aligned}
& \|\del_tE_T[<t>^b\bg(\bu)]\|_{L_{p}(0,\infty;L_{q_i}(\Omega))}
\\ &\leq C(\|(\del_t<t>^b)\bg(\bu)\|_{L_{p}(0,T;L_{q_i}(\Omega))}
+ \|<t>^b\del_t\bg(\bu)\|_{L_{p}(0,T;L_{q_i}(\Omega))})
\\ &\leq C \bigg(\|bt<t>^{b-2}\bg(\bu)\|_{L_{p}(0,T;L_{q_i}(\Omega))}
+ \left\|<t>^b \nabla\bu\otimes\bu \right\|_{L_{p}(0,T;L_{q_i}(\Omega))}
\\ &+ \left\|<t>^b\int_0^t\nabla\bu\,ds \otimes \del_t\bu\right\|_{L_{p}(0,T;L_{q_i}(\Omega))}\bigg). 
\end{aligned}
\end{align}
The first term can be estimated from $bt<t>^{b-1}\leq b<t>^b$ and \eqref{esti:diu} 
and, also, the estimate for the third term was obtained 
in the estimate of $\bff(\bu)$ (see \eqref{eq:estifua} and \eqref{eq:estifuc}). 
The second term can be estimated as follows 
For $i=0,2$, by H\"{o}lder's inequality \eqref{eq:Holder} and Lemma \ref{Lem:nonlinesti0}, 
\begin{align} \label{esti:duu}
\begin{aligned}
&\left\|<t>^b\nabla\bu\otimes \bu \right\|_{L_{p}(0,T;L_{q_i}(\Omega))}
\\ &\leq \left\|<t>^b\nabla\bu\right\|_{L_{p}(0,T;L_{\infty}(\Omega))} 
\left\|\bu\right\|_{L_{\infty}(0,T;L_{q_{i}}(\Omega))}
\leq C[\bu]_{(0,T)}^2. 
\end{aligned}
\end{align}
To summarize, we have 
\begin{align}
\sum_{i=0,2}\|<t>^b(E_T\bg(\bu),\del_t E_T\bg(\bu))\|_{L_{p}(0,T;L_{q_i}(\Omega))}
\leq C[\bu]_{(0,T)}^2. 
\end{align}

\subti{Estimate for $g(\bu)$ and $\bh(\bu)$}

We prove 
\begin{align} \label{eq:estighu}
&\sum_{i=0,2}(\|\tdh E_T[<t>^b(g(\bu),\bh(\bu)\bn)]\|_{L_{p}(\BR;L_{q_i}(\Omega))} 
\\ &\qquad+\|E_T[<t>^b(g(\bu),\bh(\bu)\bn)]\|_{L_{p}(\BR;H^1_{q_i}(\Omega))}) 
\leq C[\bu]_{(0,T)}^2. 
\end{align}
To estimate the first term of the left-hand side, 
we introduce an extension mapping $\iota:L_{1,\loc}(\Omega)\to L_{1,\loc}(\Omega)$ satisfying 
\begin{itemize}
\item[(e1)] For any $q\in(1,\infty)$ and $f\in H^m_q(\Omega)$,$\iota f\in H^{m}_{q}(\BR^N)$ 
and $\|\iota f\|_{H^{m}_{q}(\BR^N)}\leq C\|f\|_{H^m_{q}(\Omega)}$ hold for $m=0,1$. \item[(e2)] For any $q\in(1,\infty)$ and $f\in H^1_q(\Omega)$, 
$\|(1-\Delta)^{-1/2}\iota(\nabla f)\|_{L_{q}(\BR^N)}\leq C\|f\|_{L_{q}(\Omega)}$,  
where the operator $(1-\Delta)^s$ is defined by 
$(1-\Delta)^sg=\IFT{[(1+|\xi|^2)^s\FT{[g]}]}$, holds for $s\in\BR$. 
\end{itemize}
Then, by the same fashion as in \cite[Appendix A]{Shiba15DE}, for $p,q\in(1,\infty)$, 
we have 
\begin{align} \label{eq:embedhalf}
\begin{aligned}
&H^{1}_{p}(\BR;H^{-1}_{q}(\Omega)) \cap L_{p}(\BR;H^{1}_{q}(\Omega)) 
\subset H^{1/2}_{p}(\BR;L_{q}(\Omega)), 
\\ & \|\tdh f\|_{L_{p}(\BR;L_{q}(\Omega))}
\leq C(\|\del_t[(1-\Delta)^{-1/2}\iota f]\|_{L_{p}(\BR;L_{q}(\BR^N))}
+\|f\|_{L_{p}(\BR;H^1_{q}(\Omega))}),  
\end{aligned}
\end{align}
where 
\begin{align}
H^{-1}_q(\Omega)=\text{ dual of } \widehat{H}^{1}_{q',0}(\Omega). 
\end{align}
Thus, by the embedding \eqref{eq:embedhalf} and the estimate \eqref{eq:EstiET} of $E_T$, for $i=0,2$, 
\begin{align}
&\|\tdh E_T[<t>^b(g(\bu),\bh(\bu)\bn)]\|_{L_{p}(\BR;L_{q_i}(\Omega))} 
\\ &\leq C(\|\del_t[(1-\Delta)^{-1/2}\iota E_T[<t>^b(g(\bu),\bh(\bu)\bn)]]\|_{L_{p}(\BR;L_{q_i}(\Omega))}
\\ &\qquad +\|E_T[<t>^b(g(\bu),\bh(\bu)\bn)]\|_{L_{p}(\BR;H^{1}_{q_i}(\Omega))})
\\ &\leq C(\|\del_t[(1-\Delta)^{-1/2}\iota <t>^b(g(\bu),\bh(\bu)\bn)]\|_{L_{p}(0,T;L_{q_i}(\Omega))}
\\ &\qquad +\|<t>^b(g(\bu),\bh(\bu)\bn)\|_{L_{p}(0,T;H^{1}_{q_i}(\Omega))}).
\end{align}
and so, by \eqref{eq:EstiET} again, 
\begin{align} \label{eq:estighua}
\begin{aligned}
&\sum_{i=0,2}(\|\tdh E_T[<t>^b(g(\bu),\bh(\bu)\bn)]\|_{L_{p}(\BR;L_{q_i}(\Omega))} 
\\ &\qquad+\|E_T[<t>^b(g(\bu),\bh(\bu)\bn)]\|_{L_{p}(\BR;H^1_{q_i}(\Omega))}) 
\\ &\leq C(\|\del_t[(1-\Delta)^{-1/2}\iota <t>^b(g(\bu),\bh(\bu)\bn)]\|_{L_{p}(0,T;L_{q_i}(\Omega))}
\\ &\qquad +\|<t>^b(g(\bu),\bh(\bu)\bn)\|_{L_{p}(0,T;H^{1}_{q_i}(\Omega))}). 
\end{aligned}
\end{align}

To estimate the first term, 
we apply the following lemma with $(f_1,f_2,g)=(f_1^k,f_2^k,g^k)$ for $k=1,2$ by setting 
\begin{align} \label{eq:deff1f2g}
\begin{aligned}
&(f_1^1,f_2^1,g^1)=\left(<t>^b\bV^3\left(\int_0^t\nabla\bu\,ds\right),1,\bu\right), 
\\ &(f_1^2,f_2^2,g^2)=\left(<t>^b\bV^4\left(\int_0^t\nabla\bu\,ds\right),\bn,\bu\right) 
\end{aligned}
\end{align}
because $g(\bu)=f_1^1f_2^1g^1$ and $\bh(\bu)\nml=f_1^2f_2^2g^2$ 
owing to the definition \eqref{def:nonlin} of the nonlinear terms. 

\begin{Lem} \label{Lem:minusderivative}
Let $1<p,q,r_1,r_2,r_3<\infty$ satisfy 
\begin{align} \label{eq:ri:Lem:minusderivative}
\frac{1}{q}\leq\frac{1}{r_i}\leq\frac{1}{q}+\frac{1}{N} \quad (i=1,2,3)
\end{align}
and $\iota$ be the extention map introduced above. 
Then, for $f_1,f_2,g\in L_{1,\loc}(\Omega)$ and $f=f_1f_2$, the following estimate holds.
\begin{align}
&\|\del_t[(1-\Delta)^{-1/2}\iota(f\nabla g)]\|_{L_{p}(0,T;L_{q}(\BR^N))}
\\ &\leq C(\|f\del_tg\|_{L_{p}(0,T;L_{q}(\Omega))}
+\|(\del_tf)\nabla g\|_{L_{p}(0,T;L_{r_1}(\Omega))} 
\\ &\qquad +\|(\nabla f_1)f_2\del_tg\|_{L_{p}(0,T;L_{r_2}(\Omega))}
+\|f_1(\nabla f_2)\del_tg\|_{L_{p}(0,T;L_{r_3}(\Omega))}). 
\end{align}
\end{Lem}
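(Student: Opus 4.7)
The plan is to exploit the fact that $\iota$ and the Bessel potential $(1-\Delta)^{-1/2}$ act only in space and hence commute with $\partial_t$. I would begin by writing
\[
\partial_t[(1-\Delta)^{-1/2}\iota(f\nabla g)] = (1-\Delta)^{-1/2}\iota\,\partial_t(f\nabla g),
\]
expanding $\partial_t(f\nabla g) = (\partial_t f)\nabla g + f\nabla\partial_t g$, and then moving the spatial gradient off of $\partial_t g$ via the product rule,
\[
f\nabla\partial_t g = \nabla(f\partial_t g) - (\nabla f_1)f_2\partial_t g - f_1(\nabla f_2)\partial_t g,
\]
using $\nabla f = (\nabla f_1)f_2 + f_1(\nabla f_2)$. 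This decomposes the target into four pieces: one in divergence form, and three with all spatial derivatives landing on factors other than $g$.

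For the divergence-form piece $\nabla(f\partial_t g)$, property (e2) of $\iota$ gives directly
\[
\|(1-\Delta)^{-1/2}\iota\,\nabla(f\partial_t g)\|_{L_q(\BR^N)} \leq C\|f\partial_t g\|_{L_q(\Omega)},
\]
which accounts for the first term on the right-hand side of the claimed estimate. For each of the remaining three pieces, I would use that $(1-\Delta)^{-1/2}\iota$ is bounded from $L_{r_i}(\Omega)$ into $L_q(\BR^N)$ whenever $\tfrac{1}{q}\leq\tfrac{1}{r_i}\leq\tfrac{1}{q}+\tfrac{1}{N}$; this combines (e1) with the standard mapping property $(1-\Delta)^{-1/2}:L_{r_i}(\BR^N)\to L_q(\BR^N)$, which follows from Hardy--Littlewood--Sobolev at the endpoint $\tfrac{1}{q} = \tfrac{1}{r_i}-\tfrac{1}{N}$, the trivial identity at $r_i = q$, and interpolation in between. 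Applying this bound to each of $(\partial_t f)\nabla g$, $(\nabla f_1)f_2\partial_t g$, and $f_1(\nabla f_2)\partial_t g$ with the respective exponent $r_1,r_2,r_3$, and then taking the $L_p(0,T;\cdot)$ norm in time, yields the four terms on the right.

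The main obstacle is the summand $f\nabla\partial_t g$: a direct estimate would require controlling a spatial derivative of $\partial_t g$, which is unavailable. The product-rule rearrangement above is the crux --- it trades $\nabla\partial_t g$ for $\partial_t g$ paired either with a spatial derivative of $f_j$ (to be measured in $L_{r_2}$ or $L_{r_3}$) or with one extra gradient that is immediately absorbed by $(1-\Delta)^{-1/2}$ via (e2). The splitting $f = f_1 f_2$ in the statement is precisely what permits the three derivative-on-$f$ terms to be measured in three possibly distinct spaces, matching the flexibility needed when the lemma is applied with the choices \eqref{eq:deff1f2g}, where $f_1$ carries the weight $\langle t\rangle^b$ and $\bV^i$ while $f_2$ is either $1$ or the (spatially variable) normal $\bn$.
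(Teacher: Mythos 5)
Your proposal is correct and follows essentially the same route as the paper: commute $\partial_t$ past $(1-\Delta)^{-1/2}\iota$, apply the time product rule, rewrite $f\nabla\partial_t g=\nabla(f\partial_t g)-(\nabla f_1)f_2\partial_t g-f_1(\nabla f_2)\partial_t g$, handle the divergence term by (e2), and control the remaining three terms by the boundedness of $(1-\Delta)^{-1/2}\iota:L_{r_i}(\Omega)\to L_q(\BR^N)$. The paper phrases this last boundedness as $(1-\Delta)^{-1/2}\iota:L_{r_i}(\Omega)\to H^1_{r_i}(\BR^N)$ followed by the Sobolev embedding $H^1_{r_i}(\BR^N)\hookrightarrow L_q(\BR^N)$, which is mathematically the same content as your Hardy--Littlewood--Sobolev/interpolation argument.
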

\begin{proof}
We follow the idea in the proof of \cite[Lemma 3.3]{Shiba15DE}. We rewrite 
\begin{align}
&\del_t[(1-\Delta)^{-1/2}\iota(f\nabla g)]
\\ &=(1-\Delta)^{-1/2}\iota(\del_tf\nabla g)
+(1-\Delta)^{-1/2}\iota(f\nabla\del_tg)
\\ &=(1-\Delta)^{-1/2}\iota(\del_tf\nabla g)
+(1-\Delta)^{-1/2}\iota [\nabla(f\del_tg)]
-(1-\Delta)^{-1/2}\iota(\nabla f\del_tg). 
\\ &=(1-\Delta)^{-1/2}\iota(\del_tf\nabla g)
+(1-\Delta)^{-1/2}\iota [\nabla(f\del_tg)]
\\ &-(1-\Delta)^{-1/2}\iota[(\nabla f_1)f_2\del_tg] 
-(1-\Delta)^{-1/2}\iota[f_1(\nabla f_2)\del_tg],. 
\end{align}
and then we obtain 
\begin{align}
&\|\del_t[(1-\Delta)^{-1/2}\iota(f\nabla g)]\|_{L_{p}(0,T;L_{q}(\BR^N))}
\\ &\leq C(\|(1-\Delta)^{-1/2}\iota(\del_tf\nabla g)\|_{L_{p}(0,T;L_{q}(\BR^N))}
\\ &\qquad +\|(1-\Delta)^{-1/2}\iota [\nabla(f\del_tg)]\|_{L_{p}(0,T;L_{q}(\BR^N))}
\\ &\qquad +\|(1-\Delta)^{-1/2}\iota[(\nabla f_1)f_2\del_tg]\|_{L_{p}(0,T;L_{q}(\BR^N))}
\\ &\qquad +\|(1-\Delta)^{-1/2}\iota[f_1(\nabla f_2)\del_tg]\|_{L_{p}(0,T;L_{q}(\BR^N))}). 
\end{align}
The second term is estimated by the property (e2) of the extension mapping $\iota$ as 
\begin{align}
\|(1-\Delta)^{-1/2}\iota [\nabla(f\del_tg)]\|_{L_{p}(0,T;L_{q}(\BR^N))}
\leq \|f\del_tg\|_{L_{p}(0,T;L_{q}(\Omega))} 
\end{align}
and, from Sobolev's inequality, the other terms are estimated as follows. 
\begin{align*}
&\|(1-\Delta)^{-1/2}\iota(\del_tf\nabla g)\|_{L_{p}(0,T;L_{q}(\BR^N))}
\\ &\leq C\|(1-\Delta)^{-1/2}\iota(\del_tf\nabla g)\|_{L_{p}(0,T;H^1_{r_1}(\BR^N))}
\leq C\|\del_tf\nabla g\|_{L_{p}(0,T;L_{r_1}(\Omega))},
\\ &\|(1-\Delta)^{1/2}\iota[(\nabla f_1)f_2\del_tg]\|_{L_{p}(0,T;L_{q}(\BR^N))}
\\ &\leq C\|(1-\Delta)^{-1/2}\iota[(\nabla f_1)f_2\del_tg]\|_{L_{p}(0,T;H^1_{r_2}(\BR^N))}
\leq C\|(\nabla f_1)f_2\del_tg\|_{L_{p}(0,T;L_{r_2}(\Omega))}, 
\\ &\|(1-\Delta)^{1/2}\iota[f_1(\nabla f_2)\del_tg]\|_{L_{p}(0,T;L_{q}(\BR^N))}
\\ &\leq C\|(1-\Delta)^{-1/2}\iota[f_1(\nabla f_2)\del_tg]\|_{L_{p}(0,T;H^1_{r_3}(\BR^N))}
\leq C\|f_1(\nabla f_2)\del_tg\|_{L_{p}(0,T;L_{r_3}(\Omega))}. 
\end{align*}
This completes the proof. 
\end{proof}

{\bf Estimate for the first term of the right-hand side in \eqref{eq:estighua}}

Define $(f_1^k,f_2^k,g^k)$ for $k=1,2$ by \eqref{eq:deff1f2g} 
so that $g(\bu)=f_1^1f_2^1g^1$ and $\bh(\bu)\nml=f_1^2f_2^2g^2$.
Set $f^k=f_1^kf_2^k$. 
Then, by Lemma \ref{Lem:minusderivative} with $(f_1,f_2,g,f)=(f_1^k,f_2^k,g^k,f^k)$, 
the first term of the right-hand side in \eqref{eq:estighua} is estimated as 
\begin{align} \label{eq:estighu_applLem}
\begin{aligned}
&\sum_{i=0,2}\|\del_t[(1-\Delta)^{-1/2}\iota <t>^b(g(\bu),\bh(\bu)\bn)]
\|_{L_{p}(0,T;L_{q_i}(\Omega))}
\\ &\leq C\sum_{k=1,2,\,i=0,2}(\|f^k\del_tg^k\|_{L_{p}(0,T;L_{q_i}(\Omega))}
+\|(\del_tf^k)\nabla g^k\|_{L_{p}(0,T;L_{q_i}(\Omega))} 
\\ &\qquad +\|(\nabla f_1^k)f_2^k\del_tg\|_{L_{p}(0,T;L_{q_0}(\Omega))}
+\|(\nabla f_1^k)f_2^k\del_tg\|_{L_{p}(0,T;L_{q_2/2}(\Omega))}
\\ &\qquad +\|f_1^k(\nabla f_2^k)\del_tg^k\|_{L_{p}(0,T;L_{q_i}(\Omega))}). 
\end{aligned}
\end{align}
Noting that $f^k=f_1^kf_2^k$ and that $f_2^k$ is independent of $t$, we have 
\begin{align} \label{eq:repgutf1k}
\del_tf_1^k
&=f_2^k\del_t\bV^{2+k} \left(\int_0^t\nabla\bu\,ds\right)
\\ &=f_2^k\sum_{i,j=1}^N \del_{(i,j)}\bV^{2+k}\left(\int_0^t\nabla\bu\,ds\right)\del_iu_j\otimes \bu 
+\bV^{2+k}\left( \int_0^t \nabla\bu\,ds \right)\del_t\bu. 
\end{align}
Thus, for $q\in[1,\infty]$ and $k=1,2$, 
H\"{o}lder's inequality \eqref{eq:Holder} and Lemma \ref{Lem:nonlinesti0} (d) yield 
\begin{align}
&\|f^k\del_tg^k\|_{L_{p}(0,T;L_{q}(\Omega))}
\leq C\left\|\int_0^t\nabla\bu\,ds\otimes<t>^b\del_t\bu\right\|_{L_{p}(0,T;L_{q}(\Omega))}, 
\\ &\|(\del_tf^k)\nabla g^k\|_{L_{p}(0,T;L_{q}(\Omega))}
\\ \notag &\leq C(\left\|\int_0^t\nabla\bu\,ds\otimes<t>^b\nabla\bu\right\|_{L_{p}(0,T;L_{q}(\Omega))}
+\|\nabla\bu\otimes<t>^b\nabla\bu\|_{L_{p}(0,T;L_{q}(\Omega))}), 
\\ &\|(\nabla f_1^k)f_2^k\del_tg^k\|_{L_{p}(0,T;L_{q}(\Omega))}
\leq C\left\|\int_0^t\nabla^2\bu\,ds\otimes<t>^b\del_t\bu\right\|_{L_{p}(0,T;L_{q}(\Omega))}, 
\\ &\|f_1^k(\nabla f_2^k)\del_tg^k\|_{L_{p}(0,T;L_{q}(\Omega))}
\leq C\left\|\int_0^t\nabla\bu\,ds\otimes<t>^b\del_t\bu\right\|_{L_{p}(0,T;L_{q}(\Omega))} 
\end{align} 
and so, by combining these estimates with \eqref{eq:estighu_applLem}, we have 
\begin{align} \label{eq:estighub}
\begin{aligned}
&\|\del_t[(1-\Delta)^{-1/2}\iota <t>^b(g(\bu),\bh(\bu)\bn)]
\|_{L_{p}(0,T;L_{q_0}(\Omega))}
\\ &\leq C\sum_{i=0,2} \bigg(
\left\|\int_0^t\nabla\bu\,ds\otimes<t>^b\del_t\bu\right\|_{L_{p}(0,T;L_{q_i}(\Omega))}
\\ &\qquad+\left\|<t>^{b}\int_0^t\nabla\bu\,ds\otimes\nabla\bu\right\|_{L_{p}(0,T;L_{q_i}(\Omega))}
+\left\|\nabla\bu\otimes<t>^b\nabla\bu\right\|_{L_{p}(0,T;L_{q_i}(\Omega))}
\\ &\qquad+\left\|\int_0^t\nabla^2\bu\,ds\otimes<t>^b\del_t\bu\right\|_{L_{p}(0,T;L_{q_0}(\Omega))}
+\left\|\int_0^t\nabla^2\bu\,ds\otimes<t>^b\del_t\bu\right\|_{L_{p}(0,T;L_{q_2/2}(\Omega))}\bigg). 
\end{aligned}
\end{align}

The first term has been estimated by $C[\bu]_{(0,T)}^2$ in \eqref{eq:estifua} and \eqref{eq:estifuc}. 
Here, we estimate the second term. 
When $i=0$, by H\"{o}lder's inequality \eqref{eq:Holder} and Lemma \ref{Lem:nonlinesti0}, 
\begin{align} \label{esti:didu}
&\left\|<t>^b\int_0^t\nabla\bu\,ds \otimes \nabla\bu \right\|_{L_{p}(0,T;L_{q_0}(\Omega))}
\\ &\leq \left\|\int_0^t\nabla\bu\,ds\right\|_{L_{\infty}(0,T;L_{q_{03}}(\Omega))}
\left\|<t>^b\nabla\bu\right\|_{L_{p}(0,T;L_{q_{04}}(\Omega))} 
\leq C[\bu]_{(0,T)}^2. 
\end{align}
The estimate for $i=2$ can be obtained 
by replacing $q_{03}$ and $q_{04}$ with $q_2$ and $\infty$, respectively.  
The third term of the right-hand side in \eqref{eq:estighub} can be estimated 
by H\"{o}lder's inequality \eqref{eq:Holder} and Lemma \ref{Lem:nonlinesti0} as follows: for $i=0,2$, 
\begin{align} \label{esti:dudu}
\begin{aligned}
&\left\|<t>^b\nabla\bu \otimes \nabla\bu \right\|_{L_{p}(0,T;L_{q_i}(\Omega))}
\\ &\leq C\|<t>^b\nabla\bu\|_{L_{p}(0,T;L_{\infty}(\Omega))}
\|\nabla\bu\|_{L_{\infty}(0,T;L_{q_i}(\Omega))}
\leq C[\bu]_{(0,T)}^2. 
\end{aligned}
\end{align}
We next estimate the fourth term of the right-hand side in \eqref{eq:estighub}. 
By H\"{o}lder's inequality \eqref{eq:Holder} and Lemma \ref{Lem:nonlinesti0},  
\begin{align} \label{esti:ddiddu0}
\begin{aligned}
&\left\|<t>^b\int_0^t\nabla^2\bu\,ds \otimes \del_t\bu \right\|_{L_{p}(0,T;L_{q_0}(\Omega))}
\\ &\leq \left\|\int_0^t\nabla^2\bu\,ds\right\|_{L_{\infty}(0,T;L_{q_{03}}(\Omega))}
\left\|<t>^b\del_t\bu\right\|_{L_{p}(0,T;L_{q_{04}}(\Omega))} 
\leq C[\bu]_{(0,T)}^2. 
\end{aligned}
\end{align}
Finally, the fifth term of the right-hand side in \eqref{eq:estighub} can be estimated 
by H\"{o}lder's inequality \eqref{eq:Holder} and Lemma \ref{Lem:nonlinesti0} as 
\begin{align} \label{esti:ddiddu2}
\begin{aligned}
&\left\|<t>^b\int_0^t\nabla^2\bu\,ds \otimes \del_t\bu \right\|_{L_{p}(0,T;L_{q_2/2}(\Omega))}
\\ &\leq \left\|\int_0^t\nabla^2\bu\,ds\right\|_{L_{\infty}(0,T;L_{q_{2}}(\Omega))}
\left\|<t>^b\del_t\bu\right\|_{L_{p}(0,T;L_{q_{2}}(\Omega))} 
\leq C[\bu]_{(0,T)}^2. 
\end{aligned}
\end{align}

{\bf Estimate of the second term in \eqref{eq:estighua}}

It remains to show the estimate for the second term 
$\|<t>^b(g(\bu),\bh(\bu)\bn)\|_{L_{p}(0,T;H^{1}_{q_i}(\Omega))}$
of the right-hand side in \eqref{eq:estighua}. 
By the definition \eqref{def:nonlin} of the nonlinear terms 
and Lemma \eqref{Lem:nonlinesti0} (d), for $i=0,2$, 
\begin{align} \label{eq:estighu2ndt}
\begin{aligned}
&\|<t>^b(g(\bu),\bh(\bu)\bn)\|_{L_{p}(0,T;H^{1}_{q_i}(\Omega))}
\\ &\leq C \bigg(\left\|<t>^b\int_0^t\nabla\bu\,ds\otimes\nabla\bu\right\|_{L_{p}(0,T;L_{q_i}(\Omega))}
\\ &\qquad+ \left\|<t>^b\int_0^t\nabla^2\bu\,ds\otimes\nabla\bu\right\|_{L_{p}(0,T;L_{q_i}(\Omega))}
\\ &\qquad+ \left\|<t>^b\int_0^t\nabla\bu\,ds\otimes\nabla^2\bu\right\|_{L_{p}(0,T;L_{q_i}(\Omega))}\bigg),  
\end{aligned}
\end{align}
and so, 
since we have estimated these terms in \eqref{esti:didu}, \eqref{esti:ddidu} and \eqref{eq:estifua}, 
we obtain
\begin{align}
\|<t>^b(g(\bu),\bh(\bu)\bn)\|_{L_{p}(0,T;H^{1}_{q_i}(\Omega))} \leq C[\bu]_{(0,T)}^2. 
\end{align}

To summarize, we conclude \eqref{esti:nonlin}.


\section{Proof of Theorem \ref{Thm:GWPha}} \label{sec:half}

In this section, we prove the global well-posedness 
and the decay property in $\BR^N_+$ with $N\geq3$. 
To obtain the global well-posedness, for $T>0$, 
assuming the unique existence of a solution $\bu$ of \eqref{LNS} with $\Omega=\hsp$ on $(0,T)$, 
we show an analogue of the estimate \eqref{esti:lin0} for the Stokes problem 
and an analogue of the estimate \eqref{esti:nonlin} for the nonlinear term. 

The difficulty in obtaining the result for $N=3$ arises from the estimate of 
$\|<t>^b\bg(\bu)\|_{L_{p}(0,T;L_{q_0}(\Omega))}$ 
beacause $\bg(\bu)$ has the slowest decay of the nonliner terms. 
However, this term is just an additional term appearing 
when we apply the maximal regularity to the time-shifted Stokes problem \eqref{SS}. 
This observation enables us to overcome this difficulty 
by reducing the Stokes problem to the problem with $(G,\bG)=(0,\fb0)$
before applying it; 
we subtract a function $K_0G$ satisfying $\divv K_0G=G=\divv\bG$ from the solution. 
The function $K_0G$ is constructed by solving the Poisson equation 
and, thanks to $\Omega=\BR^N_+$, 
$K_0G$ has homogeneous estimates with respect to the derivative order, 
see \eqref{esti:diveq:Lem} below. 
Owing to this, in Subsection \ref{subsec:linesti:half}, 
we obtain the estimate 
\begin{align} \label{esti:linn:half}
[\bu]_{(0,T)} \leq 
C(\cc{N}_{\hsp}(\bff(\bu),\bg(\bu),g(\bu),\bh(\bu)\bn,\bv_0)+[K_0g(\bu)]_{(0,T)}) 
\end{align}
in $\hsp$, which corresponds to \eqref{esti:linn} 
but the first term of the right-hand side does not include the crucial norm 
$\|<t>^b\bg(\bu)\|_{L_{p}(0,T;L_{q_0}(\Omega))}$. 
Here,  
\begin{align} \label{def:NormofData:half}
\begin{aligned}
&\cc{N}_{\hsp}(\bff(\bu),\bg(\bu),g(\bu),\bh(\bu),\bv_0) 
\\ &\begin{aligned}
= \sum_{i=0,2}\big(&\|<t>^b\bff(\bu)\|_{L_{p}(0,T;L_{q_i}(\Omega))}
+\|\del_tE_T[<t>^b\bg(\bu)]\|_{L_{p}(\BR;L_{q_i}(\Omega))}
\\ & +\|\tdh E_T[<t>^b(g(\bu),\bh(\bu)\bn)]\|_{L_{p}(\BR;L_{q_i}(\Omega))}
\\ & +\|E_T[<t>^b(g(\bu),\bh(\bu)\bn)]\|_{L_{p}(\BR;H^1_{q_i}(\Omega))}
+\|\bv_0\|_{B^{2(1-1/p)}_{q_i,p}(\Omega)}\big). 
\end{aligned}
\end{aligned}
\end{align}
In Subsection \ref{subsec:nonlinesti:half}, 
we show that the second term $[K_0g(\bu)]_{(0,T)}$ is harmless 
and obtain the analogue of \eqref{esti:nonlin}, 
\begin{align} \label{esti:nonlin:half}
\cc{N}_{\hsp}(\bff(\bu),\bg(\bu),g(\bu),\bh(\bu),\bv_0)+[K_0g(\bu)]_{(0,T)} 
\leq C(\cc{I}+[\bu]_{(0,T)}^2), 
\end{align} 
where the norm $\cc{I}$ of $\bv_0$ is defined by \eqref{def:ininorm}.  
Then, we have 
\begin{align} \label{esti:half}
[\bu]_{(0,T)}\leq C\eps 
\end{align}
and, in the same way as in \eqref{eq:decaypf}, we obtain the decay property \eqref{eq:decay}.

\subsection{The $L_q$-$L_r$ estimates} \label{subsec:ass:half}
In this subsection, to employ the same argument as in Section \ref{sec:gen}, 
we prove the $L_q$-$L_r$ estimates for the decay rate 
\begin{align} \label{eq:decayLqLr:half}
\sigma_m(q,r)=\frac N2 \left(\frac1q-\frac1r\right)+\frac m2
\end{align} 

\begin{Prop} \label{Prop:LqLresti:half}
Define $\sigma_m(q,r)$ by \eqref{eq:decayLqLr:half}. 
Then, for $(q,r)$ satisfying $1<q\leq r\leq\infty$ and $q\ne\infty$, 
there exists $C=C(q,r)>0$ such that 
\begin{align} \label{esti:LqLr:half}
\begin{aligned}
\| (\del_te^{-tA_q}\bff,\nabla^2e^{-tA_q}\bff) \|_{L_{r}(\hsp)} 
&\leq Ct^{-\sigma_2(q,r)}\|\bff\|_{L_{q}(\hsp)} \quad (r\ne\infty) \\ 
\| \nabla^m e^{-tA_q}\bff \|_{L_{r}(\hsp)} 
&\leq Ct^{-\sigma_m(q,r)}\|\bff\|_{L_{q}(\hsp)} \quad (m=0,1) 
\end{aligned}
\end{align}
for $t\geq1$ and $\bff\in J_{q}(\hsp)$. 
\end{Prop}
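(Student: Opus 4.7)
The approach is to exploit the Dunford integral representation of the analytic Stokes semigroup,
\begin{equation*}
e^{-tA_q}\bff = \frac{1}{2\pi i}\int_\Gamma e^{\lambda t}(\lambda+A_q)^{-1}\bff\,d\lambda,
\end{equation*}
along a contour $\Gamma$ lying in the region where $(\lambda+A_q)^{-1}$ exists, combined with an \emph{$L_q$-$L_r$ resolvent estimate} that refines the $L_q$-$L_q$ bounds of Shibata and Shimizu \cite{ShibaShimi03DIE}.

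The first step is to establish the resolvent estimate
\begin{equation*}
\|\nabla^m(\lambda+A_q)^{-1}\bff\|_{L_r(\hsp)} \leq C|\lambda|^{\sigma_m(q,r)-1}\|\bff\|_{L_q(\hsp)}
\quad (m=0,1,2)
\end{equation*}
for $\lambda$ in the relevant sector. Starting from the explicit representation of $(\lambda+A_q)^{-1}\bff$ via partial Fourier transform in the tangential variables (Shibata-Shimizu \cite{ShibaShimi03DIE}), the symbols are built from $A=\sqrt{\lambda+|\xi'|^2}$ and related expressions; inverting the tangential transform produces convolution kernels whose tangential $L_s(\BR^{N-1})$-norms, with $1/s=1-1/q+1/r$, are controlled by suitable powers of $|\lambda|$ via Bessel-type pointwise bounds. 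Young's inequality in the tangential directions, combined with direct estimates in the normal direction $x_N$ (exploiting the exponential factors $e^{-A x_N}$), yields the claimed $|\lambda|$-dependence. The pressure contributions, recovered through the solution operator $\sopres$ for the auxiliary weak Dirichlet problem, are handled by the same multiplier analysis.

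With the resolvent estimates in hand, I take $\Gamma=\Gamma_+\cup\Gamma_-\cup\Gamma_0$, where $\Gamma_\pm = \{re^{\pm i(\pi-\veps)}\mid r\geq 1/t\}$ and $\Gamma_0$ is the connecting arc of radius $1/t$. For $m=0,1$, inserting the resolvent bound and changing variables $\mu=\lambda t$ gives
\begin{equation*}
\|\nabla^m e^{-tA_q}\bff\|_{L_r(\hsp)}\leq Ct^{-\sigma_m(q,r)}\|\bff\|_{L_q(\hsp)}\int_{\Gamma}|e^\mu||\mu|^{\sigma_m(q,r)-1}\,|d\mu|,
\end{equation*}
and the $\mu$-integral converges since $\re\mu\leq -|\mu|\cos\veps$ on the rays. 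For $\del_t e^{-tA_q}\bff$, differentiation under the integral produces an extra factor $\lambda$, which combined with the $m=0$ bound yields $t^{-1-\sigma_0(q,r)}=t^{-\sigma_2(q,r)}$; for $\nabla^2 e^{-tA_q}\bff$, either invoke the $m=2$ resolvent bound directly or reduce to the $\del_t$ case via Lemma \ref{Lem:EquivDAqH2q}. The principal obstacle is the first step: upgrading the $L_q$-$L_q$ resolvent bound to the $L_q$-$L_r$ bound with the sharp $|\lambda|^{\sigma_m(q,r)-1}$ growth, which amounts to a careful multiplier analysis of the half-space resolvent kernel and constitutes the substantive harmonic-analytic content of the proof.
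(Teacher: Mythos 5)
Your proof takes a genuinely different route from the paper, and in a way that leaves a real gap. The paper establishes \eqref{esti:LqLr:half} in three short steps: first the case $r=q$, which follows directly from the $L_q$-$L_q$ resolvent estimate \eqref{eq:RE:half} of Shibata and Shimizu together with the standard Dunford integral and analytic-semigroup theory; second the case $0<1/q-1/r\leq 1/N$, by applying the Gagliardo-Nirenberg interpolation inequality after an even extension $E^e$ across $\{x_N=0\}$, which converts the already-proven $L_q$ decay bounds with an extra gradient into the desired $L_r$ bound with the correct exponent $\frac{N}{2}(1/q-1/r)$; and third, iteration of the second step. In this way the whole of the improvement from $L_q$ to $L_r$ comes from real interpolation applied to the semigroup, so the only analytic input is the known $L_q$-$L_q$ resolvent bound.

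Your plan instead hinges on an $L_q$-$L_r$ resolvent estimate $\|\nabla^m(\lambda+A_q)^{-1}\bff\|_{L_r(\hsp)}\leq C|\lambda|^{\sigma_m(q,r)-1}\|\bff\|_{L_q(\hsp)}$, which is strictly stronger than what \cite{ShibaShimi03DIE} provides and is not available anywhere in the paper; you note yourself that establishing it is ``the substantive harmonic-analytic content'' and leave it at the level of a sketch. That sketch is plausible in outline --- Young's inequality in the tangential variables plus exponential decay in $x_N$ is a standard device --- but carrying it out for the full half-space Stokes resolvent (including the pressure component and the $\bD(\bU)\nml=0$ boundary terms) is a nontrivial piece of work that you would need to supply before the Dunford-integral part of your argument can be invoked. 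Because of this, as written your proof is incomplete; once the resolvent estimate is proven it would close. I would also flag the reduction of $\nabla^2 e^{-tA_q}\bff$ to $\partial_t e^{-tA_q}\bff$ via Lemma \ref{Lem:EquivDAqH2q}: that lemma gives equivalence of $\|\bu\|_{H^2_q}$ with $\|(\bu,A_q\bu)\|_{L_q}$, so to use it for an $L_r$ conclusion you need it at level $r$ applied to $e^{-tA_q}\bff$, which requires first knowing that $e^{-tA_q}\bff\in D(A_r)$; this is fine (split the semigroup and use the $m=0$ case), but it should be stated. On balance, the paper's interpolation argument is both shorter and more robust, since it avoids re-deriving any resolvent kernel bounds and reuses what is already in the literature.
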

\begin{proof}
\underline{\bf The case $r=q$).} 
We first consider the case $r=q$. 
Let $\phi\in(0,\pi/2)$, $1<q<\infty$ and $\bff\in J_q(\hsp)$. 
By the resolvent estimates for the resolvent Stokes equation in $\hsp$ 
obtained in \cite[Theorem 4.1]{ShibaShimi03DIE}, 
$\bu_\lambda=(\lambda+A_q)^{-1}\bff$ satisfies
\begin{align} \label{eq:RE:half}
|\lambda|\|\bu_\lambda\|_{L_{q}(\hsp)}
+|\lambda|^{1/2}\|\nabla\bu_\lambda\|_{L_{q}(\hsp)}
+\|\nabla^2\bu_\lambda\|_{L_{q}(\hsp)} \leq C\|\bff\|_{L_{q}(\hsp)} 
\end{align}
for $\lambda\in\bb{C}\setminus\{0\}$ with $|\arg\lambda|<\pi-\phi/2$. 
By this and the properties of analytic semigroup, we obtain 
\begin{align} \label{esti:LqLq1:half}
\begin{aligned}
\| \del_te^{-tA_q}\bff \|_{L_{r}(\hsp)} 
&\leq Ct^{-\sigma_2(q,q)}\|\bff\|_{L_{q}(\hsp)}, \\ 
\| e^{-tA_q}\bff \|_{L_{r}(\hsp)} 
&\leq Ct^{-\sigma_0(q,q)}\|\bff\|_{L_{q}(\hsp)} 
\end{aligned}
\end{align}
Also, the resolvent estimate \eqref{eq:RE:half} 
and the change of variable $\lambda t=\lambda'$ in the formula 
\begin{align} \label{eq:sgform}
e^{-tA_q}\bff = \int_\Gamma e^{\lambda t}(\lambda+A_q)^{-1}\bff \,d\lambda,  
\end{align}
where $\Gamma$ is suitable contour 
from $\infty e^{-(\pi-\phi)i}$ to $\infty e^{(\pi-\phi)i}$, yield 
\begin{align} \label{esti:LqLq2:half}
\| \nabla^m e^{-tA_q}\bff \|_{L_{r}(\hsp)} 
&\leq Ct^{-\sigma_m(q,q)}\|\bff\|_{L_{q}(\hsp)} \quad (m=1,2). 
\end{align}
This and \eqref{esti:LqLq1:half} imply \eqref{esti:LqLr:half}. 

\underline{\bf The case $0<1/r-1/q\leq 1/N$)}. 
We now show the result for the case $0<1/r-1/q\leq 1/N$ 
from the Gagliardo-Nirenberg interpolation inequality. 
If we define the even extension operator $E^e:L_{1,\loc}(\wsp)\to L_{1,\loc}(\hsp)$ as  
\begin{align}
E^ef(x',x_N)=\left\{ \begin{array}{ll}
f(x',x_N) & x_N>0, \\
f(x',-x_N) & x_N<0 
\end{array} \right.
\end{align}
and set 
\begin{align}
\alpha=N\left(\frac{1}{q}-\frac{1}{r}\right), 
\end{align}
from the Gagliardo-Nirenberg interpolation inequality 
and the result for the case $r=q$, 
\begin{align*}
\|\nabla^me^{-tA_q}\bff\|_{L_{r}(\hsp)} 
&\leq \|E^e\nabla^me^{-tA_q}\bff\|_{L_{r}(\BR^N)} 
\\ &\leq C\|\nabla E^e\nabla^me^{-tA_q}\bff\|_{L_{q}(\BR^N)}^{\alpha}
\|E^e\nabla^me^{-tA_q}\bff\|_{L_{q}(\BR^N)}^{1-\alpha}
\\ &\leq C\|\nabla^{m+1}e^{-tA_q}\bff\|_{L_{q}(\hsp)}^{\alpha}
\|\nabla^me^{-tA_q}\bff\|_{L_{q}(\hsp)}^{1-\alpha}
\\ &\leq Ct^{-\frac{m+1}{2}\alpha-\frac{m}{2}(1-\alpha)}\|\bff\|_{L_{q}(\hsp)}
\\ &=Ct^{-\frac{N}{2}\left(\frac{1}{q}-\frac{1}{r}\right)-\frac{m}{2}}\|\bff\|_{L_{q}(\hsp)}
\end{align*}
for $m=0,1$. By combining this with the result for $r=q\ne\infty$, we also obtain 
\begin{align}
&\|\del_te^{-tA_q}\bff\|_{L_{r}(\hsp)} + \|\nabla^2e^{-tA_q}\bff\|_{L_{r}(\hsp)}
\\ &\leq Ct^{-1}\|e^{-(t/2)A_q}\bff\|_{L_{r}(\hsp)}
\leq Ct^{-\frac{N}{2}\left(\frac{1}{q}-\frac{1}{r}\right)-1}\|\bff\|_{L_{q}(\hsp)}. 
\end{align}
\underline{\bf The case $1/r-1/q>1/N$)}. 
The estimate for the case $1/r-1/q>1/N$ can be obtained  
by repeating use of the result for the case $1/r-1/q\leq 1/N$. 
\end{proof}

\subsection{Estimate for the Stokes problem in the half space} \label{subsec:linesti:half}

In this subsection, we show a theorem analogous to Theorem \ref{Thm:MR} for $\Omega=\hsp$ 
by reducing the Stokes problem to the problem with $(\bG,G)=(\fb0,0)$. 
To state the theorem and to execute the reduction, 
we introduce a solution operator $K_0$ to the divergence equation, 
which is proved for example in \cite[Lemma 4.1 (1)]{ShibaShimi12MSJ}
by solving the Poisson equation. 
\begin{Lem}[{e.g. \cite{ShibaShimi12MSJ}}] \label{Lem:diveq}
Let $1<q<\infty$. 
There exists an operator 
$K_0:H^1_q(\BR^N_+)\cap\widehat{H}^{-1}_q(\BR^N_+)\to H^{2}_{q}(\hsp)^N$ 
such that, for $g\in H^1_q(\BR^N_+)\cap\widehat{H}^{-1}_q(\BR^N_+)$, 
$K_0g$ satisfies the divergence equation $\divv v=g$ and the estimate 
\begin{align} \label{esti:diveq:Lem}
\begin{gathered}
\|K_0g\|_{L_{q}(\hsp)}\leq C\|g\|_{\widehat{H}^{-1}_{q}(\hsp)}, \quad 
\|\nabla K_0g\|_{L_{q}(\hsp)}\leq C\|g\|_{L_{q}(\hsp)}, \\ 
\|\nabla^2K_0g\|_{L_{q}(\hsp)}\leq C\|\nabla g\|_{L_{q}(\hsp)}. 
\end{gathered}
\end{align}
\end{Lem}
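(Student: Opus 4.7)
The plan is to construct $K_0 g$ as the gradient of a potential that solves a Poisson problem in $\hsp$ with the Neumann condition inherited from the reflection symmetry of the half-space. Concretely, I would set $K_0 g := \nabla \phi$, where $\phi$ solves the Neumann Poisson problem $\Delta\phi = g$ in $\hsp$ with $\del_N \phi|_{\del \hsp} = 0$; one explicit realization is the Newtonian potential on $\BR^N$ of the even extension of $g$ across $\del \hsp$, for which the boundary condition is automatic by reflection symmetry. Then $\divv K_0 g = \Delta\phi = g$ as required.

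The second and third estimates in \eqref{esti:diveq:Lem} reduce to $L_q$-bounds on $\nabla^2 \phi$ and $\nabla^3 \phi$ in terms of $g$ and $\nabla g$ respectively. For the former, apply the partial Fourier transform in the tangential variables $x'$; this reduces the Neumann problem to explicit second-order ODEs in $x_N$ and realizes $\nabla^2 \phi$ as a Fourier multiplier acting on the extension of $g$, whose $L_q$-boundedness is classical Calder\'on--Zygmund / Mikhlin theory. For the latter, exploit the tangential translation invariance of $\hsp$: for $j = 1,\ldots,N-1$, the function $\del_j \phi$ solves $\Delta(\del_j \phi) = \del_j g$ with the same homogeneous Neumann condition, so the bound just obtained yields $\|\nabla^2 \del_j \phi\|_{L_q} \le C\|\del_j g\|_{L_q}$. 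This controls every third derivative of $\phi$ except $\del_N^3 \phi$, which I recover from the equation itself via $\del_N^2 \phi = g - \Delta_{x'}\phi$, giving $\del_N^3 \phi = \del_N g - \Delta_{x'} \del_N \phi$, where the last term has just been estimated.

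It remains to prove the first, and most delicate, estimate $\|K_0 g\|_{L_q} = \|\nabla \phi\|_{L_q} \le C\|g\|_{\widehat{H}^{-1}_q}$; the idea is duality. By Hahn--Banach applied to the isometric embedding $\psi \mapsto \nabla\psi$ of $\widehat{H}^1_{q',0}(\hsp)$ into $L_{q'}(\hsp)^N$, any $g \in \widehat{H}^{-1}_q(\hsp)$ admits a representation $\langle g, \psi\rangle = (\bff, \nabla \psi)_{\hsp}$ for some $\bff \in L_q(\hsp)^N$ with $\|\bff\|_{L_q} \le C\|g\|_{\widehat{H}^{-1}_q}$. Integration by parts against test functions $\psi \in \widehat{H}^1_{q',0}(\hsp)$ (which vanish on $\del \hsp$) then yields $(\nabla\phi + \bff, \nabla\psi)_{\hsp} = 0$ for every admissible $\psi$, and the unique solvability of the weak Dirichlet problem in $\hsp$ (Assumption~\ref{Ass:wDp}, valid by Remark~\ref{Rem:wDNp}) converts this orthogonality relation into the desired pointwise $L_q$-bound on $\nabla\phi$.

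The main obstacle I anticipate lies precisely in this last duality step: the range of $\nabla$ on $\widehat{H}^1_{q',0}(\hsp)$ is a proper subspace of $L_{q'}(\hsp)^N$, so passing from the tested bound $|(\nabla\phi, \nabla\psi)_{\hsp}| \le C\|g\|_{\widehat{H}^{-1}_q}\|\nabla\psi\|_{L_{q'}}$ to a pointwise $L_q$-estimate on $\nabla\phi$ requires the weak Dirichlet solvability of Assumption~\ref{Ass:wDp} to supply the missing projection onto the gradient subspace. By contrast, the second- and third-derivative estimates are essentially classical half-space Calder\'on--Zygmund computations combined with the commutation of tangential derivatives through the Neumann Laplacian.
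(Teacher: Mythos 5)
Your overall plan---set $K_0 g = \nabla\phi$ with $\phi$ solving a Poisson problem in $\hsp$, then run tangential commutation plus Calder\'on--Zygmund for the second and third estimates---is sound, and your handling of those two estimates (differentiating tangentially and recovering $\del_N^3\phi$ from the equation) works for either boundary condition. But choosing the \emph{Neumann} problem breaks the first estimate, and the duality step where you try to patch it has a genuine gap. From $(\nabla\phi + \bff,\nabla\psi)_{\hsp}=0$ for all $\psi\in\widehat{H}^1_{q',0}(\hsp)$ you can only conclude $\nabla\phi+\bff\in J_q(\hsp)$; Assumption~\ref{Ass:wDp} gives uniqueness for solutions \emph{belonging to} $\widehat{H}^1_{q,0}(\hsp)$, and the Neumann solution $\phi$ does not vanish on $\del\hsp$, so it is not in this space and uniqueness cannot be invoked. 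The Helmholtz projection of $\nabla\phi$ onto $\nabla\widehat{H}^1_{q,0}$ is controlled by $\|\bff\|_{L_q}$, but the $J_q$-component of $\nabla\phi$ is not, and it is genuinely nonzero for the Neumann solution. You can see the failure concretely in the partial Fourier picture: with $G_N(x_N,y_N,\xi')=-\tfrac{1}{2|\xi'|}[e^{-|\xi'||x_N-y_N|}+e^{-|\xi'|(x_N+y_N)}]$ one has $G_N(x_N,0,\xi')\ne 0$, so writing $\langle g,\psi\rangle=(\bff,\nabla\psi)$ and integrating the $f_N$-term by parts in $y_N$ leaves a boundary contribution proportional to $e^{-|\xi'|x_N}\widehat{f}_N(\xi',0)$ in $\nabla\phi$, i.e.\ the harmonic extension of the trace of $f_N$. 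Taking $f_N(x)=u(x')\chi(x_N/\delta)$ and letting $\delta\to0$ sends $\|\bff\|_{L_q(\hsp)}\to0$ while this trace term stays of unit size, so $\|\nabla\phi\|_{L_q}\not\lesssim\|g\|_{\widehat{H}^{-1}_q}$.

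The repair is to solve the \emph{Dirichlet} Poisson problem instead: take $\phi$ with $\Delta\phi=g$ in $\hsp$ and $\phi|_{\del\hsp}=0$. Then $\phi$ is (up to sign) exactly the weak Dirichlet solution $\theta\in\widehat{H}^1_{q,0}(\hsp)$ of Assumption~\ref{Ass:wDp} for a Hahn--Banach representative $\bff$ of $g$, so the estimate $\|\nabla\phi\|_{L_q}\le C\|\bff\|_{L_q}\le C\|g\|_{\widehat{H}^{-1}_q}$ is immediate from \eqref{esti:wDp}; equivalently, in the Fourier picture the Dirichlet Green's function satisfies $G_D(x_N,0,\xi')=0$, so the offending trace term is absent. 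Your second and third estimates go through unchanged with the Dirichlet condition, since tangential derivatives $\del_j$ ($j<N$) preserve $\phi|_{\del\hsp}=0$ and the rest of your argument is boundary-condition-agnostic. The cited reference \cite{ShibaShimi12MSJ} likewise constructs $K_0$ by solving the Poisson equation; the operative point you missed is which boundary condition is compatible with the negative-order estimate.
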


The following theorem is the main theorem of this subsection. 
Note that \eqref{esti:linn:half} is obtained immediately by \eqref{esti:MR:half} below 
if we assume the unique existence of the solution $\bu$ to \eqref{LNS} on $(0,T)$.  
\begin{Thm} \label{Thm:MR:half}
Let $1<p,q<\infty$ and $T\in(0,\infty]$. 
Define $\bmmpq$ by \eqref{def:bmpq} 
for the decay rate $\sigma_m(q,r)$ and $\delta$ 
given by \eqref{eq:decayLqLr:half} and \eqref{def:deltahalf}, respectively. 
For any $\bv_0\in\cc{D}_{q,p}(\hsp)$ 
and right members $(\bF,\bG,G,\bH)$ defined on $(0,T)$ satisfying 
\begin{align} \label{eq:DataClassMR:half}
\begin{aligned}
&<t>^b\bF \in L_{p}(0,T;L_{q}(\hsp)^N), \quad  
E_T[<t>^b\bG] \in H^1_{p}(\BR;L_{q}(\hsp)^N), 
\\ &E_T[<t>^bG] \in H^{1/2}_{p}(\BR;L_{q}(\hsp))\cap L_{p}(\BR;H^{1}_{q}(\hsp)), 
\\ &E_T[<t>^b\bH] \in H^{1/2}_{p}(\BR;L_{q}(\hsp)^N)\cap L_{p}(\BR;H^{1}_{q}(\hsp)^N), 
\end{aligned}
\end{align}
with the compatibility condition
\begin{align} \label{eq:DataCCMR:half}
(G(t),\varphi)_{\hsp} = -(\bG(t),\nabla\varphi)_{\hsp} 
\text{ for any } \varphi\in\widehat{H}^{1}_{q',0}(\hsp), 
\end{align}
the Stokes problem \eqref{S} admits unique solutions 
\begin{gather}
\bU \in H^{1}_{p}(0,T;L_{q}(\hsp)^N) \cap L_{p}(0,T;H^{2}_{q}(\hsp)^N), \ 
\Pres \in L_{p}(0,T;\sppresh). 
\end{gather} 
Moreover, the solutions possess the estimate
\begin{align} \label{esti:MR:half}
[\bu]_{(0,T)} \leq C_b\cc{N}_{\hsp}(\bF,\bG,G,\bH,\bv_0) 
\end{align}
for $b\geq0$, where the constant $C_b>0$ is independent of $T$. 
\end{Thm}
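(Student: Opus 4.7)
The plan is to reduce the Stokes problem to one with $(\bG,G)=(\fb{0},0)$, to which the general-domain maximal regularity Theorem~\ref{Thm:MR} applies, and then recover $\bU$ from the reduced problem. Concretely, I would set $\bw=\bU-K_0 G$ with $K_0$ the divergence solver from Lemma~\ref{Lem:diveq}. Since $\divv K_0 G = G$, we have $\divv\bw=0$; the compatibility \eqref{eq:DataCCMR:half} forces $G|_{t=0}=0$, so $\bw|_{t=0}=\bv_0$; and a direct computation shows that $(\bw,\Pres)$ solves the Stokes problem in $\hsp$ with right members $\bF'=\bF-\del_t K_0 G + \Divv\bS(K_0 G,0)$, $\bH'=\bH-\bS(K_0 G,0)\nml$, $\bG'=\fb{0}$, $G'=0$, and initial datum $\bv_0$. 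Theorem~\ref{Thm:MR} then gives $[\bw]_{(0,T)}\leq C\,\cc{N}(\bF',\fb{0},0,\bH',\bv_0)$, so by the triangle inequality $[\bU]\leq[\bw]+[K_0 G]$ it suffices to prove the two bounds $\cc{N}(\bF',\fb{0},0,\bH',\bv_0)\leq C\,\cc{N}_{\hsp}(\bF,\bG,G,\bH,\bv_0)$ and $[K_0 G]_{(0,T)}\leq C\,\cc{N}_{\hsp}(\bF,\bG,G,\bH,\bv_0)$.

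Both bounds rest on Lemma~\ref{Lem:diveq} and the time-independence of $K_0$. In particular $\del_t K_0 G = K_0\del_t G$, and compatibility gives $\del_t G = -\divv\del_t\bG$ distributionally, so
\begin{equation*}
\|\del_t K_0 G\|_{L_q(\hsp)} \leq C\|\del_t G\|_{\widehat{H}^{-1}_q(\hsp)} \leq C\|\del_t\bG\|_{L_q(\hsp)},
\end{equation*}
while $\|\nabla^2 K_0 G\|_{L_q(\hsp)}\leq C\|\nabla G\|_{L_q(\hsp)}$ and $\|\nabla K_0 G\|_{L_q(\hsp)}\leq C\|G\|_{L_q(\hsp)}$ land directly in $\cc{N}_{\hsp}$ after weighting by $<t>^b$ and integrating. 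The new feature compared with Section~\ref{sec:gen} is that $\cc{N}_{\hsp}$ only controls $\|\del_t E_T[<t>^b\bG]\|_{L_p(\BR;L_{q_i})}$ and not $\|<t>^b\bG\|_{L_p(0,T;L_{q_i})}$ itself, so I would bridge the gap through
\begin{equation*}
<t>^b\del_t\bG = \del_t(<t>^b\bG) - (\del_t<t>^b)\bG,
\end{equation*}
and control the residual $\|<t>^{b-1}\bG\|_{L_p(0,T;L_{q_i})}$ via the classical Hardy inequality $\|f/t\|_{L_p(0,\infty)}\leq C\|\del_t f\|_{L_p(0,\infty)}$ applied to $f=<t>^b\bG$ (which vanishes at $t=0$), using $<t>^{b-1}\leq <t>^b/t$ for $t\geq 1$ together with the direct bound $\|\bG(t)\|_{L_q}\leq t^{1/p'}\|\del_s\bG\|_{L_p(0,t;L_q)}$ for $t\in(0,1)$.

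The only ingredient of $[K_0 G]$ not already covered by the homogeneous $\nabla$-estimates of Lemma~\ref{Lem:diveq} is $K_0 G$ itself at $m=0$, where $\|K_0 G\|_{L_q}\leq C\|\bG\|_{L_q}$ costs the full $\widehat{H}^{-1}_q\to L_q$ bound. For this I would combine this with the uniform-in-time estimate $\|\bG\|_{L_\infty(0,T;L_{q_i})}\leq C\|<s>^b\del_s\bG\|_{L_p(0,T;L_{q_i})}$, which follows from $\bG(0)=0$, H\"{o}lder's inequality with the integrable weight $<s>^{-b}$ (valid since $b>1/p'$) and the Hardy step above. This suffices because the relevant time-weight exponent $\bmmpq[0]$ is either $0$ (when $\tp_0=\infty$, $\tq_0=q_0$, using $\sigma_0(q_0,q_0)=0$) or strictly below $-1/p$ (when $\tp_0=p$, by the choice of $\delta$ in \eqref{def:deltahalf}), so $\|<t>^{\bmmpq[0]}\|_{L_{\tp_0}(0,\infty)}<\infty$. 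The main obstacle I anticipate is the bookkeeping around the half-time derivative of $\bS(K_0 G,0)\nml$: being a boundary trace of a non-gradient first-order expression, it forces a careful application of the embedding \eqref{eq:embedhalf} — available only in the half space — in concert with the boundedness of $K_0$ on $\widehat{H}^{-1}_q\to L_q$ and the same Hardy argument. Uniqueness of $\bU$ follows from the uniqueness statement for the general Stokes problem quoted above.
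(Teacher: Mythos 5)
Your reduction $\bw = \bU - K_0 G$ is exactly the paper's decomposition $\bU = K_0 G + \bU_r$, and the passage to a Stokes problem with $(\bG',G')=(\fb0,0)$ followed by an application of Theorem~\ref{Thm:MR} is the same strategy. The bounds $\|\del_t K_0 G\|_{L_q}\leq C\|\del_t\bG\|_{L_q}$, $\|\nabla K_0 G\|_{L_q}\leq C\|G\|_{L_q}$, $\|\nabla^2 K_0 G\|_{L_q}\leq C\|\nabla G\|_{L_q}$ that you use for $\bF'$ and $\bH'$ are also the ones the paper uses.

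The divergence — and the gap — lies in your final claim that $[K_0 G]_{(0,T)}\leq C\,\cc{N}_{\hsp}(\bF,\bG,G,\bH,\bv_0)$. The paper does \emph{not} prove this; its proof of this theorem stops at \eqref{esti:lin0:half}, which keeps $\|<t>^{\bmmpq}\nabla^m K_0 G\|_{L_{\tp_m}(0,T;L_{\tq_m})}$ explicitly on the right-hand side, and the corresponding quantity $[K_0 g(\bu)]_{(0,T)}$ reappears in \eqref{esti:linn:half}. It is handled later in Subsection~\ref{subsec:nonlinesti:half}, where the specific quadratic structure of $g(\bu)$ and $\bg(\bu)$ is exploited, not the generic data class \eqref{eq:DataClassMR:half}. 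Your attempted closure is wrong as written: you assert that $\bmmpq[0]$ is either $0$ (at $\tq_0=q_0$, $\tp_0=\infty$) or strictly below $-1/p$ (at $\tp_0=p$), but that description only covers $\tq_0=q_0$. The index set $I_0$ ranges over all $\tq_0\in[q_0,\infty]$, and for $\tq_0$ large enough (e.g., $\tq_0=\infty$) the hypothesis $b+1/p<\frac{N}{2q_0}$ and the choice of $\delta$ in \eqref{def:deltahalf} force $\bmpqz{0}{\tp_0}{\tq_0}=b>0$, so $\|<t>^{\bmmpq[0]}\|_{L_{\tp_0}(0,\infty)}$ is divergent and your integrability argument does not apply. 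In those regimes you would need $\|<t>^{b}\bG\|_{L_p(0,T;L_q)}$ (or a comparable quantity) on the right-hand side, but that is exactly the norm $\cc{N}_{\hsp}$ was designed to omit — omitting it is the whole point of the half-space reduction, since $\bg(\bu)$ has the slowest decay among the nonlinearities. Also, your Hardy step $\|<t>^{b-1}\bG\|_{L_p}\lesssim\|\del_t(<t>^b\bG)\|_{L_p}$ is not completed for $t\in(0,1)$: the inequality $\|\bG(t)\|_{L_q}\leq t^{1/p'}\|\del_s\bG\|_{L_p(0,t;L_q)}$ replaces $\del_t(<t>^b\bG)$ with $\del_t\bG$, and passing between them reintroduces a term $\|bt<t>^{b-2}\bG\|_{L_p(0,1;L_q)}$ that you would need to absorb, which requires a smallness of the constant (roughly $b\,p^{-1/p}<1$) that is not assumed. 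The conclusion is that you should retain the $K_0G$-contribution in the estimate as in \eqref{esti:lin0:half} — this is what the paper actually establishes and what is needed downstream via \eqref{esti:nonlin2:half} — rather than attempting to absorb it into $\cc{N}_{\hsp}$ for arbitrary data.
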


\begin{proof}
It suffices to construct a solution of \eqref{S} 
possessing the estimate 
\begin{align} \label{esti:lin0:half}
\begin{aligned}
&\|<t>^{\bmmpq[2]}\del_t\bU\|_{L_{\tp_2}(0,T;L_{\tq_2}(\hsp))} 
\\ &\leq C_b\cc{N}_{\hsp}(\bF,\bG,G,\bH,\bv_0)+\|<t>^{\bmmpq[2]}\del_tK_0G\|_{L_{\tp_2}(0,T;L_{\tq_2}(\hsp))}, 
\\ &\|<t>^{\bmmpq}\nabla^m\bU\|_{L_{\tp_m}(0,T;L_{\tq_m}(\hsp))} 
\\ &\leq C_b\cc{N}_{\hsp}(\bF,\bG,G,\bH,\bv_0)+\|<t>^{\bmmpq}\nabla^mK_0G\|_{L_{\tp_m}(0,T;L_{\tq_m}(\hsp))}, 
\end{aligned}
\end{align}
for any $m=0,1,2$ and $(\tp_m,\tq_m)$ satisfying \eqref{eq:exp_restriction}. 
For almost everywhere $t\in(0,T)$, 
because the compatibility condition \eqref{eq:DataCCMR:half} yields
\begin{align} \label{eq:GinH-1q}
|(G(t),\varphi)_{\hsp}|=|(\bG(t),\nabla\varphi)_{\hsp}|
\leq\|\bG(t)\|_{L_{q}(\hsp)}\|\nabla\varphi\|_{L_{q'}(\hsp)} 
\text{ for any } \varphi\in \widehat{H}^{1}_{q',0}(\hsp), 
\end{align}
we have $G(t)\in\widehat{H}^{-1}_{q'}(\hsp)$ 
and $\|G(t)\|_{\widehat{H}^{-1}_{q}(\hsp)}\leq\|\bG(t)\|_{L_{q}(\hsp)}$ 
and so, by Lemma \ref{Lem:diveq}, 
we have $\divv K_0G=G$ and 
\begin{gather} \label{esti:K0G1}
\|K_0G(t)\|_{L_{q}(\hsp)}\leq C\|G(t)\|_{\widehat{H}^{-1}_{q}(\hsp)}
\leq C\|\bG(t)\|_{L_{q}(\hsp)}, 
\\ \label{esti:K0G2}
\begin{gathered}
\|\nabla K_0G(t)\|_{L_{q}(\hsp)}\leq C\|G(t)\|_{L_{q}(\hsp)}, \\ 
\|\nabla^2K_0G(t)\|_{L_{q}(\hsp)}\leq C\|\nabla G(t)\|_{L_{q}(\hsp)}. 
\end{gathered}
\end{gather}
For the solutions $\bU$ and $\Pres$ of the Stokes problem \eqref{S}, 
if we set $\bU=K_0G+\bU_r$, $\bU_r$ and $\Pres$ obey the system
\begin{align} \label{S:G=0}
\left\{
\begin{array}{ll}
\del_t\bU_r - \Divv\bS(\bU_r,\Pres) = \bF_r, \quad
\divv \bU_r=0 & \text{in } \hsp\times(0,T), \\
\bS(\bU_r,\Pres)\nml = \bH_r & \text{on } \del\hsp\times(0,T), \\
\bU_r|_{t=0}=\bv_0 & \text{in } \hsp, 
\end{array}
\right. 
\end{align}
where the right members $\bF_r$ and $\bH_r$ are defined by 
\begin{align}
\bF_r=\bF-\del_tK_0G+\Divv(\mu\bD(K_0G)), 
\quad \bH_r=\bH-\mu\bD(K_0G)\bn. 
\end{align}
Note the right member $\bv_0$ of the initial condition does not change since 
$K_0[G]|_{t=0}=K_0[G|_{t=0}]=\fb0$. 
Since \eqref{eq:DataCCMR:half} is valid for $(G,\bG)=(\del_tG,\del_t\bG)$, 
we similarly have 
\begin{align}
\|\del_tK_0G(t)\|_{L_{q}(\hsp)}=\|K_0\del_tG(t)\|_{L_{q}(\hsp)}
\leq C\|\del_t\bG(t)\|_{L_{q}(\hsp)}
\end{align}
and then, this and \eqref{esti:K0G2} imply 
\begin{align}
& \|<t>^b\bF_r\|_{L_{p}(0,T;L_{q}(\hsp))}
\leq C\|<t>^b(\bF,\del_t\bG,\nabla G)\|_{L_{p}(0,T;L_{q}(\hsp))}, 
\\ & \|\tdh E_T[<t>^b\bH_r]\|_{L_{p}(\BR;L_{q}(\hsp))}
+\|E_T[<t>^b\bH_r]\|_{L_{p}(\BR;H^1_{q}(\hsp))}
\\ & \leq \|\tdh E_T[<t>^b(\bH,G)]\|_{L_{p}(\BR;L_{q}(\hsp))}
+\|E_T[<t>^b(\bH,G)]\|_{L_{p}(\BR;H^1_{q}(\hsp))}. 
\end{align}

In the half-space, 
Assumptions \ref{Ass:unifdom} on the $W^2_\infty$ domain is satisfied
and, on Assumption \ref{Ass:wDp}, 
the unique solvability of the weak Dirichlet problem \eqref{wDp} is well-known. 
Also, by Proposition \ref{Prop:LqLresti:half}, 
the $L_q$-$L_r$ estimates holds for the decay rate $\sigma_m(q,r)$ 
and $\sigma_m(q,r) $ satisfies the condition \textup{(C1)} in Theorem \ref{Thm:GWPgen}. 
Thus, we can apply Theorem \ref{Thm:MR} to the system \eqref{S:G=0} 
and show that \eqref{S:G=0} admits unique solutions 
\begin{align}
\bU_r \in H^{1}_{p}(0,T;L_{q}(\hsp)^N) \cap L_{p}(0,T;H^{2}_{q}(\hsp)^N), \ 
\Pres \in L_{p}(0,T;\sppresh), 
\end{align}
which possesses the estimate 
\begin{align}
&\|<t>^{\bmmpq[2]}\del_t\bU\|_{L_{\tp_2}(0,T;L_{\tq_2}(\hsp))} 
\\ &\leq C\cc{N}_{\hsp}(\bF_r,\fb0,0,\bH_r,\bv_0) 
\leq C\cc{N}_{\hsp}(\bF,\bG,G,\bH,\bv_0) 
\end{align}
and, similarly, 
$\|<t>^{\bmmpq}\nabla^m\bU\|_{L_{\tp_m}(0,T;L_{\tq_m}(\hsp))} \leq \cc{N}_{\hsp}(\bF,\bG,G,\bH,\bv_0)$. 
Combining this and $\bU=K_0G+\bU_r$ concludes 
the solvability of \eqref{S} and the estimates \eqref{esti:lin0:half}, 
which completes the proof. 
\end{proof}

\subsection{Estimate for the nonlinear terms in the half space} \label{subsec:nonlinesti:half}

In this subsection, we prove \eqref{esti:nonlin:half}. 
Let $2<p<\infty$, $1<q_0<N<q_2<\infty$, $b>1/p'$. 
Define $\bmmpq$ by \eqref{def:bmpq} 
for the decay rate $\sigma_m(q,r)$ and $\delta>0$ 
given by \eqref{eq:decayLqLr:half} and \eqref{def:deltahalf}, respectively. 
We assume that the transformed problem \eqref{LNS} admits a unique solution $\bu$ on $(0,T)$
and $[\bu]_{(0,T)}$ is sufficiently small. 
It suffices to show 
\begin{align} \label{esti:nonlin1:half}
\cc{N}_{\hsp}(\bff(\bu),\bg(\bu),g(\bu),\bh(\bu),\bv_0) \leq C(\cc{I}+[\bu]_{(0,T)}^2)  
\end{align}
and 
\begin{align} \label{esti:nonlin2:half}
\begin{aligned}
&\|<t>^{\bmmpq[2]}\del_tK_0g(\bu)\|_{L_{\tp_2}(0,T;L_{\tq_2}(\hsp))} \leq C(\cc{I}+[\bu]_{(0,T)}^2), 
\\ &\|<t>^{\bmmpq}\nabla^mK_0g(\bu)\|_{L_{\tp_m}(0,T;L_{\tq_m}(\hsp))} \leq C(\cc{I}+[\bu]_{(0,T)}^2) 
\end{aligned}
\end{align}
for $m=0,1,2$ and $(\tp_m,\tq_m)$ satisfying \eqref{eq:exp_restriction}. 
In fact, \eqref{esti:nonlin2:half} and the definition \eqref{def:Normofu} 
imply $[K_0g]_{(0,T)}\leq C(\cc{I}+[\bu]_{(0,T)}^2)$
and, by this and \eqref{esti:nonlin1:half}, we conclude \eqref{esti:nonlin:half}.

\subti{Proof of \eqref{esti:nonlin1:half}}

On account of \eqref{eq:reasonC2} in Remark \ref{Rem:C2}, if $N=3$, 
we cannot take $q_{03}$ and $q_{04}$ with $1/q_{03}+1/q_{04}=1/q_{0}$ satisfying \eqref{eq:ass:Rem}. 
Instead, we define them so that $\sigma_1(q_0,q_{03})>1$ and $\sigma_1(q_0,q_{04})>b+1/p$ by 
\begin{align} \label{def:q03q04half}
\quad 
\sigma_1(q_0,q_{03})=1+\delta_0, \ 
\sigma_1(q_0,q_{04})=b+\frac1p+\delta_0 \quad 
\text{with } \delta_0=\frac12\left(\frac{N}{2q_0}-(b+\frac1p)\right)
\quad 
\end{align}
and then, prove \eqref{esti:nonlin1:half} in the similar way as in \eqref{esti:nonlin}. 
We first state that, by the same proof, 
we have the estimates in Lemma \ref{Lem:nonlinesti0} 
except for the following estimate on the 0-th derivative of $\bu$: 
\begin{align}
\|<t>^b\bu\|_{L_{\tp_0}(0,T;L_{\tq_0}(\Omega))}\leq C[\bu]_{(0,T)} 
\quad \text{if } (\tp_0,\tq_0) \text{ satisfies \eqref{eq:exp_restriction} and } \tq_0\geq q_{04} 
\end{align}
and show that this estimate is valid if $<t>^b$ is replaced by $<t>^{b-1/2}$. 
\begin{Lem} \label{Lem:nonlinesti0:half} 
Let $q_{03}$ and $q_{04}$ be the exponents given by \eqref{def:q03q04half}. 
\begin{enumerate}
\renewcommand{\labelenumi}{(\alph{enumi})}
\setlength{\parskip}{0mm} 
\setlength{\itemsep}{0mm} 
\item There holds 
\begin{align}
\|\nabla^m\bu\|_{L_{\infty}(0,T;L_{\tq_m}(\hsp))} \leq C[\bu]_{(0,T)} 
\end{align}
if $m=0,1$ and $(\infty,\tq_m)$ satisfy \eqref{eq:exp_restriction}. 
\item There holds 
\begin{align}
&\|<t>^b\del_t\bu\|_{L_{\tp_2}(0,T;L_{\tq_2}(\hsp))} \leq C[\bu]_{(0,T)}, 
\\ &\|<t>^b\nabla^m\bu\|_{L_{\tp_m}(0,T;L_{\tq_m}(\hsp))} \leq C[\bu]_{(0,T)} \quad (m=1,2),  
\\ &\|<t>^{b-1/2}\bu\|_{L_{\tp_0}(0,T;L_{\tq_0}(\hsp))} \leq C[\bu]_{(0,T)},  
\end{align}
for $m=0,1,2$ and $(\tp_m,\tq_m)$ 
satisfying \eqref{eq:exp_restriction} and $\tq_m\geq q_{04}$. 
\item There holds 
\begin{align}
\left\| \int_0^t \nabla^m \bu(s)\,ds \right\|_{L_{\infty}(0,T;L_{\tq_m}(\hsp))} 
\leq C[\bu]_{(0,T)} 
\end{align}
if $m=1,2$ and $(p,\tq_m)$ satisfy \eqref{eq:exp_restriction} and $\tq_m\geq\min\{q_{04},q_{03}\}$. 
\item There holds $\|\bW(\int_0^t\nabla\bu\,ds)\|_{L_{\infty}(0,T;L_{\infty}(\hsp))}\leq C$ 
for any polynomial $\bW$. 
\end{enumerate}
\end{Lem}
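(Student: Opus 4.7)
The plan is to follow the strategy of the proof of Lemma \ref{Lem:nonlinesti0}, adapted to the new choice of $q_{03}$ and $q_{04}$ in \eqref{def:q03q04half}. Parts (a), (c), (d) and the $m=1,2$ portion of (b) go through essentially unchanged; only the $m=0$ case of (b) requires the weaker weight $<t>^{b-1/2}$, reflecting the loss of $1/2$ arising from $\sigma_0(q_0, q_{04}) = \sigma_1(q_0, q_{04}) - 1/2$. This loss is then what forces the companion estimate \eqref{esti:nonlin2:half} on $K_0 g(\bu)$ to recover the full factor.

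First, for part (a), I would observe that $b_m(\infty, \tq_m) = \min\{\sigma_m(q_0, \tq_m), b\} \geq 0$ since $q_0 \leq \tq_m$ gives $\sigma_m(q_0, \tq_m) \geq 0$ and $b > 1/p' > 0$. The definition \eqref{def:Normofu} of $[\bu]_{(0,T)}$ then yields the bound directly. Part (d) reduces to part (c) applied with $m = 1$, $\tq_1 = \infty$, after writing $\bW(\bv) - \bW(\fb 0)$ as a polynomial in $\bv$ with no constant term and bounding each monomial in $L_\infty$.

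For part (b), the key computation using \eqref{def:q03q04half} is
\begin{align*}
\sigma_m(q_0, q_{04}) = \sigma_1(q_0, q_{04}) + \frac{m-1}{2} = b + \frac{1}{p} + \delta_0 + \frac{m-1}{2}.
\end{align*}
For $m = 1, 2$ and $\tq_m \geq q_{04}$, monotonicity of $\sigma_m(q_0, \cdot)$ in its second argument together with $\delta < \delta_0$ yields $b_m(p, \tq_m) = \min\{\sigma_m(q_0, \tq_m) - 1/p - \delta, b\} = b$, so the stated estimate is immediate from the definition of $[\bu]_{(0,T)}$. For $m = 0$ and $\tq_0 \geq q_{04}$, the same computation gives only $b_0(p, \tq_0) \geq b + \delta_0 - \delta - 1/2 \geq b - 1/2$, which is generally sharp and accounts for the weakened weight $<t>^{b-1/2}$.

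For part (c), I would apply H\"older's inequality in time just as in the proof of Lemma \ref{Lem:nonlinesti0}(c), factoring $\nabla^m \bu(s) = <s>^{-b_m(p, \tq_m)} \cdot <s>^{b_m(p, \tq_m)} \nabla^m \bu(s)$ and reducing the claim to $\|<s>^{-b_m(p, \tq_m)}\|_{L_{p'}(0, \infty)} < \infty$, i.e., $b_m(p, \tq_m) > 1/p'$. When $\tq_m \geq q_{03}$, the identity $\sigma_1(q_0, q_{03}) = 1 + \delta_0$ gives $b_1(p, \tq_m) \geq \min\{1/p' + \delta_0 - \delta, b\} > 1/p'$; when $\tq_m \geq q_{04}$, the computation above gives $b_m(p, \tq_m) = b > 1/p'$. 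The main subtlety to track is the case split between $q_{03}$ and $q_{04}$ and verifying that $\delta$ was chosen in \eqref{def:deltahalf} so that both strict inequalities hold. No essential new obstacle arises beyond the half-integer weight loss identified in part (b), which is to be addressed in the sequel via the $K_0$ operator and Lemma \ref{Lem:diveq}.
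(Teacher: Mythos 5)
Your proposal is correct and follows essentially the same route as the paper: the only genuinely new verification is that $b_0(\tp_0,\tq_0)\geq b-\frac12$ under the new choice \eqref{def:q03q04half}, which is exactly the calculation the paper records, while parts (a), (c), (d) and the $m=1,2$ cases of (b) carry over unchanged from Lemma \ref{Lem:nonlinesti0}. Your write-up is merely more explicit about why the $q_{03}$/$q_{04}$ case split and the identity $\sigma_m(q_0,q_{04})=\sigma_1(q_0,q_{04})+\frac{m-1}{2}$ make the earlier arguments go through.
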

\begin{proof}
We only need to prove the last estimate in (b). We obtain 
\begin{align}
\bmpqz{0}{p}{\tq_0} & =\min\{\sigma_0(q_0,q_{04})-\frac{1}{p}-\delta,b\}
\\ &=\min\{b+\frac1p+\delta_2-\frac12-\frac{1}{p}-\delta,b\} \geq b-\frac12 \quad (\tp_0=1/p) 
\\ \bmpqz{0}{\infty}{\tq_0} & =\min\{\sigma_0(q_0,q_{04}),b\}
\\ &=\min\{b+\frac1p+\delta_2-\frac12,b\} \geq b-\frac12 \quad (\tp_0=\infty) 
\end{align}
by \eqref{def:q03q04half}. Thus, 
\begin{align}
\|<t>^{b-1/2}\bu\|_{L_{\tp_0}(0,T;L_{\tq_0}(\hsp))} 
\leq \|<t>^{\bmmpq[0]}\bu\|_{L_{\tp_0}(0,T;L_{\tq_0}(\hsp))}
\leq C[\bu]_{(0,T)}
\end{align}
by the definition \eqref{def:bmpq} of $\bmmpq$, which implies the desired estimate. 
\end{proof}

The desired estimate \eqref{esti:nonlin1:half} is obtained as follows. 
By the estimates \eqref{eq:estifu} for $\bff(\bu)$, 
\eqref{eq:estibgut} for $\del_tE_T[<t>^n\bg(\bu)]$, 
\eqref{eq:estighua}, \eqref{eq:estighub} and \eqref{eq:estighu2ndt} for $(g(\bu),\bh(\bu))$, 
we have 
\begin{align} \label{esti:nonlin1a:half}
\begin{aligned}
&\cc{N}_{\hsp}(\bff(\bu),\bg(\bu),g(\bu),\bh(\bu),\bv_0)
\\ &\leq C\sum_{i=0,2}\bigg(
\left\|<t>^b\int_0^t\nabla\bu\otimes(\del_t\bu,\nabla^2\bu)\right\|_{L_{p}(0,T;L_{q_i}(\hsp))}
\\ &\qquad +\sum_{m=1,2}\left\|<t>^b\int_0^t\nabla^m\bu\otimes\nabla\bu\right\|_{L_{p}(0,T;L_{q_i}(\hsp))}
\\ &\qquad +\left\|bt<t>^{b-2}\int_0^t\nabla\bu\otimes\bu\right\|_{L_{p}(0,T;L_{q_i}(\hsp))}
\\ &\qquad +\left\|<t>^b\int_0^t\nabla^2\bu\otimes\del_t\bu\right\|_{L_{p}(0,T;L_{q_0}(\hsp))}
\\ &\qquad +\left\|<t>^b\int_0^t\nabla^2\bu\otimes\del_t\bu\right\|_{L_{p}(0,T;L_{q_2/2}(\hsp))}
\\ &\qquad +\sum_{m=0,1}\|<t>^b\nabla\bu\otimes\nabla^m\bu\|_{L_{p}(0,T;L_{q_i}(\hsp))}
\bigg). 
\end{aligned}
\end{align}
The 0-th derivative of $\bu$ appears only in the terms 
\begin{align}
\left\|bt<t>^{b-2}\int_0^t\nabla\bu\otimes\bu\right\|_{L_{p}(0,T;L_{q_i}(\hsp))}, 
\quad \|<t>^b\nabla\bu\otimes\bu\|_{L_{p}(0,T;L_{q_i}(\hsp))} 
\end{align} 
and the second one is estimated by $C[\bu]_{(0,T)}$ 
from Lemma \ref{Lem:nonlinesti0:half} (a) as in \eqref{esti:duu}. 
The estimate for the first term with $i=0$ is shown from Lemma \ref{Lem:nonlinesti0:half} as 
\begin{align}
&\left\|bt<t>^{b-2}\int_0^t\nabla\bu\otimes\bu\right\|_{L_{p}(0,T;L_{q_0}(\hsp))}
\\ &\leq C\left\|\int_0^t\nabla\bu\right\|_{L_{p}(0,T;L_{q_{03}}(\hsp))}
\|<t>^{b-1}\bu\|_{L_{p}(0,T;L_{q_{04}}(\hsp))}
\leq C[\bu]_{(0,T)}^2
\end{align} 
and the estimate with $i=2$ is obtained 
by replacing $q_{03}$ and $q_{04}$ with $q_2$ and $\infty$ in this calculation. 
The other terms of the right-hand side in \eqref{esti:nonlin1a:half} 
are estimated by $C[\bu]_{(0,T)}^2$ since we have the same estimates 
for $\del_t\bu$, $\nabla\bu$ and $\nabla^2\bu$ as in Section \ref{sec:gen}, 
see \eqref{eq:estifua}, \eqref{esti:didu}, \eqref{esti:ddidu}, 
\eqref{esti:ddiddu0}, \eqref{esti:ddiddu2}, \eqref{esti:dudu}. 
Then we have the desired estimate \eqref{esti:nonlin1:half}. 

\subti{Proof of \eqref{esti:nonlin2:half}}

In the remainder of this paper, 
we prove that the additional term $[K_0g(\bu)]_{(0,T)}$ is harmless
by showing the estimate \eqref{esti:nonlin2:half}. 

We first show the first inequality and second inequality with $m=0$ in \eqref{esti:nonlin2:half}. 
Since \eqref{eq:DataCCMR:half} with $(G,\bG)=(g(\bu),\bg(\bu))$ implies 
\begin{align}
\|[K_0g(\bu)](t)\|_{L_{q}(\hsp)}\leq C\|[\bg(\bu)](t)\|_{L_{q}(\hsp)} 
\text{ a.e.$t\in(0,T)$, }
\end{align} 
by the definition \eqref{def:nonlin} of the nonlinearities, 
H\"{o}lder's inequality \eqref{eq:Holder} and Lemma \ref{Lem:nonlinesti0:half} (d), 
we have
\begin{align} \label{esti:K0gu}
\begin{aligned}
&\|<t>^{\bmmpq[0]}K_0g(\bu)\|_{L_{\tp_0}(0,T;L_{\tq_0}(\hsp))}
\\ &\leq C\|<t>^{\bmmpq[0]}\bg(\bu)\|_{L_{\tp_0}(0,T;L_{\tq_0}(\hsp))}
\\ &\leq C\left\|<t>^{\bmmpq[0]}\int_0^t\nabla\bu\,ds\otimes\bu\right\|_{L_{\tp_0}(0,T;L_{\tq_0}(\hsp))}
\end{aligned}
\end{align}
and, by \eqref{eq:Holder} again and by Lemma \ref{Lem:nonlinesti0:half}, 
the right-hand side is estimated as 
\begin{align} \label{esti:diany}
\begin{aligned}
&\left\|<t>^{\bmmpq[0]}\int_0^t\nabla\bu\,ds\otimes\bu\right\|_{L_{\tp_0}(0,T;L_{\tq_0}(\hsp))}
\\ &\leq C\left\|\int_0^t\nabla\bu\,ds\right\|_{L_{\infty}(0,T;L_{\infty}(\hsp))}
\|<t>^{\bmmpq[0]}\bu\|_{L_{\tp_0}(0,T;L_{\tq_0}(\hsp))}
\leq C[\bu]_{(0,T)}^2. 
\end{aligned}
\end{align}
Similarly, since \eqref{eq:DataCCMR:half} with $(G,\bG)=(\del_tg(\bu),\del_t\bg(\bu))$ implies 
\begin{align}
\|[\del_tK_0g(\bu)](t)\|_{L_{q}(\hsp)}\leq C\|[K_0\del_tg(\bu)](t)\|_{L_{q}(\hsp)}\leq C\|[\del_t\bg(\bu)](t)\|_{L_{q}(\hsp)} 
\text{ a.e.$t\in(0,T)$, }
\end{align} 
by \eqref{eq:repgut}, 
\begin{align} \label{esti:K0gut}
\begin{aligned}
&\|<t>^{\bmmpq[2]}\del_tg(\bu)\|_{L_{\tp_2}(0,T;L_{\tq_2}(\hsp))}
\\ &\leq C\|<t>^{\bmmpq[2]}\del_t\bg(\bu)\|_{L_{\tp_2}(0,T;L_{\tq_2}(\hsp))}
\\ &\leq C\|<t>^{\bmmpq[2]}\nabla\bu\otimes\bu\|_{L_{\tp_2}(0,T;L_{\tq_2}(\hsp))}
\\ &+ C\left\|<t>^{\bmmpq[2]}\int_0^t\nabla\bu\,ds\otimes\del_t\bu\right\|_{L_{\tp_2}(0,T;L_{\tq_2}(\hsp))}. 
\end{aligned}
\end{align} 
We can estimate the second term by $C[\bu]_{(0,T)}^2$ in the same way as in \eqref{esti:diany}.  
The estimate for the first term can be obtained by \eqref{esti:duu} with $q_i=\tq_2$
combined with $\bmmpq[2]\leq b$ and \eqref{eq:exp_restriction}: 
\begin{align} \label{esti:already}
\begin{aligned}
\left\|<t>^{\bmmpq[2]}\nabla\bu\otimes\bu\right\|_{L_{\tp_2}(0,T;L_{\tq_2}(\hsp))}
\leq \left\|<t>^{b}\nabla\bu\otimes\bu\right\|_{L_{p}(0,T;L_{\tq_2}(\hsp))}
\leq C[\bu]_{(0,T)}^2. 
\end{aligned}
\end{align}

The second inequality with $m=1$ of \eqref{esti:nonlin2:half} is proven as follows: 
by \eqref{esti:K0G2} with $G=g(\bu)$, 
the definition \eqref{def:nonlin} of the nonlinearities, 
H\"{o}lder's inequality \eqref{eq:Holder} and Lemma \ref{Lem:nonlinesti0:half} (d), 
\begin{align} \label{esti:dK0gu}
\begin{aligned}
&\|<t>^{\bmmpq[1]}\nabla K_0g(\bu)\|_{L_{\tp_1}(0,T;L_{\tq_1}(\hsp))}
\\ &\leq C\|<t>^{\bmmpq[1]}g(\bu)\|_{L_{\tp_1}(0,T;L_{\tq_1}(\hsp))}
\\ &\leq C\left\|<t>^{\bmmpq[1]}\int_0^t\nabla\bu\,ds\otimes\nabla\bu\right\|_{L_{\tp_1}(0,T;L_{\tq_1}(\hsp))}
\end{aligned}
\end{align}
and the right-hand side is estimated by $C[\bu]_{(0,T)}^2$ in the same manner as in \eqref{esti:diany}. 

By \eqref{esti:K0G2} with $G=g(\bu)$, $b_2(\tp_2,\tq_2)\leq b$, \eqref{eq:exp_restriction}, 
and estimates \eqref{esti:ddidu} and \eqref{eq:estifua}, 
the case $m=2$ can be shown as 
\begin{align} \label{esti:dK0gu2}
\begin{aligned}
&\|<t>^{\bmmpq[2]}\nabla^2K_0g(\bu)\|_{L_{\tp_2}(0,T;L_{\tq_2}(\hsp))}
\\ &\leq C\|<t>^{\bmmpq[2]}\nabla g(\bu)\|_{L_{\tp_2}(0,T;L_{\tq_2}(\hsp))}
\\ &\leq C\sum_{i=0,2}\|<t>^b\nabla g(\bu)\|_{L_{p}(0,T;L_{q_i}(\hsp))}
\\ &\leq C\bigg(\sum_{i=0,2}\left\|<t>^b\int_0^t\nabla^2\bu\,ds\otimes\nabla\bu\right\|_{L_{p}(0,T;L_{q_i}(\hsp))}
\\ &\qquad +\left\|<t>^b\int_0^t\nabla\bu\,ds\otimes\nabla^2\bu\right\|_{L_{p}(0,T;L_{q_i}(\hsp))}\bigg)
\\ &\leq C[\bu]_{(0,T)}^2. 
\end{aligned}
\end{align}

To summarize, we obtain \eqref{esti:nonlin2:half} 
and then, we can conclude 
the global well-posedness of transformed problem \eqref{LNS} 
and the decay property of the solution in $\BR^N_+$ including $N=3$.   

\section{Conclusion} \label{sec:conclusion}
In study, we have 
proven the global well-posedness for quasi-linear parabolic and 
hyperbolic-parabolic equation with non-homogeneous 
boundary conditions in unbounded domains,
and provided a general framework to solve
such problems. In fact, the free boundary problem treated in
this paper is a typical problem for the quasilinear equations with 
non-homogeneous boundary condition, and we can continue to 
study in the case of two-phase problems such as incompressible-incompressible,
incompressible-compressible, and compressible-compressible 
viscous fluid flows.


\end{document}